\titleformat{\section}[hang]
{\normalfont\Large\bfseries}
{\thesection.}{0.5em}{}
\titlespacing*{\section}{0pc}{2pc}{0.25pc}
\titleformat{\subsection}[runin]
{\normalfont\large\bfseries}
{\thesubsection}{0.5em}{}
\titlespacing{\subsection}{0pc}{1.5pc}{0.5pc}
\newcommand{\supp}{\text{supp}}
\newcommand{\Aut}{\text{Aut}}
\newcommand{\N}{\mathbb{N}}
\newcommand{\Z}{\mathbb{Z}}
\newcommand{\Q}{\mathbb{Q}}
\newcommand{\R}{\mathbb{R}}
\newcommand{\C}{\mathbb{C}}
\renewcommand{\H}{\mathcal{H}}
\newcommand{\B}{\mathcal{B}}
\newcommand{\K}{\mathcal{K}}
\newcommand{\vphi}{\varphi}
\newcommand{\Tr}{\text{Tr}}
\newcommand{\<}{\left\langle}
\renewcommand{\>}{\right\rangle}
\newcommand{\dom}{\text{dom}}
\newcommand{\Ad}[1]{\text{Ad}\left(#1\right)}
\newcommand{\sltimes}[1]{\mathbin{_{#1}\ltimes}}
\newcommand{\E}{\mathcal{E}}
\newcommand{\Ind}[1]{\operatorname{Ind}#1}
\newcommand{\Sd}{\operatorname{Sd}}
\renewcommand{\S}{\operatorname{S}}
\newtheorem{thm}{Theorem}[section]
\newtheorem{thmalpha}{Theorem}
\newtheorem{prop}[thm]{Proposition}
\newtheorem{lem}[thm]{Lemma}
\newtheorem*{lem*}{Lemma}
\newtheorem{cor}[thm]{Corollary}
\theoremstyle{definition}
\newtheorem{defi}[thm]{Definition}
\newtheorem{ex}[thm]{Example}
\newtheorem{exs}[thm]{Examples}
\newtheorem{rem}[thm]{Remark}
\title{\textbf{Almost Unimodular Groups}}
\author{Aldo Garcia Guinto$^\circ$}
\address{$^\circ$Department of Mathematics, Michigan State University\hfill \url{garci575@msu.edu}}
\author{Brent Nelson$^\bullet$}
\address{$^\bullet$Department of Mathematics, Michigan State University \hfill \url{brent@math.msu.edu}}
\date{}
\begin{document}

\begin{abstract}
We show that a locally compact group has open unimodular part if and only if the Plancherel weight on its group von Neumann algebra is almost periodic. We call such groups \emph{almost unimodular}. The almost periodicity of the Plancherel weight allows one to define a Murray--von Neumann dimension for certain Hilbert space modules over the group von Neumann algebra, and we show that for finite covolume subgroups this dimension scales according to the covolume. Using this we obtain a generalization of the Atiyah--Schmid formula in the setting of second countable almost unimodular groups with finite covolume subgroups. Additionally, for the class of almost unimodular groups we present many examples, establish a number of permanence properties, and show that the formal degrees of irreducible and factorial square integrable representations are well behaved.
\end{abstract}

\maketitle


\section*{Introduction}

For any locally compact group $G$, its modular function $\Delta_G\colon G\to \R_+$ determines a continuous injective homomorphism
    \begin{align}\label{eqn:quotient_map}
        G/\ker{\Delta_G}\ni s \ker{\Delta_G}\mapsto \Delta_G(s) \in \R_+,
    \end{align}
where the domain is equipped with the quotient topology and the codomain with its usual topology. It follows from classical results that either this map is an isomorphism of locally compact groups onto $\R_+$, or $\ker{\Delta_G}$ is open in $G$ (see Lemma~\ref{lem:dichotomy}). In this article, we provide an operator algebraic perspective on this dichotomy by characterizing groups in the latter class as those with almost periodic Plancherel weights.

The \emph{Plancherel weight} associated to a left Haar measure $\mu_G$ on $G$ is a faithful normal semifinite weight $\varphi_G$ on the group von Neumann algebra $L(G)$ that satisfies
    \[
        \varphi_G(\lambda_G(f)^* \lambda_G(g)) = \int_{G} \overline{f(s)} g(s)\ d\mu_G(s)
    \]
for all continuous compactly supported functions $f,g\in C_c(G)$ (more generally for any left convolvers in $L^2(G)$). Here we denote
    \[
        \lambda_G(f):= \int_G \lambda_G(s) f(s)\ d\mu_G(s)
    \]
for any $f\in L^1(G)$, where $G\ni s\mapsto \lambda_G(s) \in \mathcal{U}(L^2(G))$ is the left regular representation. The above equality yields an isomorphism of Hilbert spaces $L^2(L(G),\varphi_G)\cong L^2(G)$ that additionally carries the modular operator $\Delta_{\varphi_G}$ of $\varphi_G$ to the modular function $\Delta_G$ of $G$ acting by pointwise multiplication. It follows that the modular automorphism group $\sigma^{\varphi_G}\colon \R\curvearrowright L(G)$ is completely determined by $\Delta_G$:
    \[
        \sigma_t^{\varphi_G}(\lambda_G(f)) = \int_G \Delta_G(s)^{it} \lambda_G(s) f(s)\ d\mu_G(s) \qquad t\in \R.
    \]
This strong correspondence means that modular properties of $\varphi_G$ can be characterized purely in terms of the group $G$ (e.g. $\varphi_G$ is tracial if and only if $\Delta_G\equiv 1$). Our first main result is a particular instance of this.

\begin{thmalpha}[{Theorem~\ref{thm:when_is_plancherel_weight_almost_periodic}}]\label{introthm:A}
For a locally compact group $G$, the following are equivalent:
    \begin{enumerate}[label=(\roman*)]
        \item $\ker{\Delta_G}$ is open;
        \item $\varphi_G$ is strictly semifinite;
        \item $\varphi_G$ is almost periodic;
        \item $\Delta_G$ viewed as an operator affiliated with $L^\infty(G) \subset B(L^2(G))$ has non-empty point spectrum.
    \end{enumerate}
In this case, the point spectrum of $\Delta_G$ is $\Delta_G(G)$, and one has $L(\ker{\Delta_G})\cong L(G)^{\varphi_G}$ where the restriction $\varphi_G|_{L(\ker{\Delta_G})}$ is the Plancherel weight corresponding to the restriction of the left Haar measure $\mu_G$ to $\ker{\Delta_G}$.
\end{thmalpha}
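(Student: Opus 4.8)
The plan is to prove the cycle $(i)\Rightarrow(iii)\Rightarrow(iv)\Rightarrow(i)$ together with $(i)\Rightarrow(ii)\Rightarrow(iv)$, working throughout in the Hilbert space model recalled above: under the identification $L^2(L(G),\varphi_G)\cong L^2(G)$, each $\Delta_{\varphi_G}^{it}$ is multiplication by $\Delta_G^{it}$, so $\Delta_{\varphi_G}$ corresponds to the (generally unbounded) multiplication operator $M_{\Delta_G}$ on $L^2(G)$. A scalar $\lambda$ is an eigenvalue of a multiplication operator $M_h$ on $L^2(X,\mu)$ precisely when $\mu\big(h^{-1}(\{\lambda\})\big)>0$; here each nonempty fiber $\Delta_G^{-1}(\{\lambda\})$ is a coset of $\ker\Delta_G$, and left-invariance of $\mu_G$ shows that one such coset has positive measure iff they all do. Since a measurable subgroup of a locally compact group has positive Haar measure if and only if it is open (a classical fact; cf.\ the discussion around Lemma~\ref{lem:dichotomy}), this already yields $(i)\Leftrightarrow(iv)$ and identifies the point spectrum of $\Delta_{\varphi_G}$ with $\Delta_G(G)$ in that case.

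Write $H:=\ker\Delta_G$. For $(i)\Rightarrow(iii)$: when $H$ is open one has the orthogonal decomposition $L^2(G)=\bigoplus_{gH\in G/H}L^2(gH)$, and because $\Delta_G$ takes the constant value $\Delta_G(g)$ on $gH$, the operator $M_{\Delta_G}$ acts on $L^2(gH)$ as the scalar $\Delta_G(g)$. Hence $\Delta_{\varphi_G}$ is diagonalized by this decomposition, which says exactly that $\varphi_G$ is almost periodic. The implication $(iii)\Rightarrow(iv)$ is immediate, since a diagonalizable operator on the nonzero space $L^2(G)$ has nonempty point spectrum.

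For $(i)\Rightarrow(ii)$, which will also give the final assertions: when $H$ is open it is closed, so $\mu_G|_H$ is a left Haar measure on $H$, and $H$ is unimodular since $\Delta_H=\Delta_G|_H\equiv 1$ (the modular functions of a group and a closed normal subgroup agree on the subgroup). The inclusion $\iota\colon L(H)\hookrightarrow L(G)$ with $\iota(\lambda_H(s))=\lambda_G(s)$ maps into $L(G)^{\varphi_G}$, since $\Delta_G\equiv 1$ on $H$ makes $\sigma^{\varphi_G}$ fix $\iota(L(H))$ pointwise. For the reverse inclusion, let $x\in L(G)^{\varphi_G}=L(G)\cap\{M_{\Delta_G^{it}}:t\in\R\}'$. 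Being in $\{M_{\Delta_G^{it}}:t\in\R\}'$, $x$ commutes with $M_{\1_H}$ (a bounded Borel function of $\Delta_G$), so $x$ preserves $L^2(H)$; being in $L(G)=\rho_G(G)'$, $x$ commutes with $\rho_G(s)$ for $s\in H$, each of which preserves $L^2(H)$. Hence $x_0:=x|_{L^2(H)}$ lies in $\rho_H(H)'=L(H)$. Checking that $\iota(x_0)$ likewise commutes with $M_{\1_H}$ and restricts to $x_0$ on $L^2(H)$, we get $y:=x-\iota(x_0)\in L(G)$ with $yM_{\1_H}=0$; conjugating by $\rho_G(s)$ yields $yM_{\1_{Hs}}=0$ for every right coset $Hs$, and $\sum_{Hs}M_{\1_{Hs}}=1$ in the strong topology forces $y=0$. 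Thus $L(G)^{\varphi_G}=\iota(L(H))$, and a direct check on $C_c(H)$ shows that under this identification $\varphi_G|_{L(G)^{\varphi_G}}$ is the Plancherel weight of $(H,\mu_G|_H)$, in particular faithful, normal and semifinite; hence $\varphi_G$ is strictly semifinite, and the supplementary claims of the theorem are exactly what we have proved. I expect this reverse inclusion---recovering a centralizer element from its compression to $L^2(H)$---to be the main obstacle, the crucial device being the coset decomposition $\sum_{Hs}M_{\1_{Hs}}=1$, which must be treated as a net when $G/H$ is uncountable (so that $\Delta_G$ may have uncountably many eigenvalues).

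Finally, $(ii)\Rightarrow(iv)$: if $\varphi_G$ is strictly semifinite, then $\varphi_G|_{L(G)^{\varphi_G}}$ is a faithful normal semifinite weight and therefore admits a nonzero projection $p$ with $\varphi_G(p)<\infty$. The functional $\varphi_p:=\varphi_G(p\,\cdot\,p)$ is a faithful normal \emph{bounded} positive functional on $pL(G)p$, and since $p\in L(G)^{\varphi_G}$ the projection $p\,J_{\varphi_G}pJ_{\varphi_G}$ reduces $\Delta_{\varphi_G}$ and its range is the GNS space of $\varphi_p$, on which $\Delta_{\varphi_p}$ is the corresponding compression of $\Delta_{\varphi_G}$. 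The GNS vector of the bounded functional $\varphi_p$ is fixed by $\Delta_{\varphi_p}$, so $1$ is an eigenvalue of $\Delta_{\varphi_G}$, which is $(iv)$. This closes both cycles, establishing the equivalence of $(i)$--$(iv)$, and the remaining statements were obtained along the way.
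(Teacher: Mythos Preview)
Your argument is correct and complete, but it differs from the paper's in several places worth noting.

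\textbf{Logical structure.} The paper proves the linear chain $(i)\Rightarrow(ii)\Rightarrow(iii)\Rightarrow(iv)\Rightarrow(i)$, whereas you run two overlapping cycles. The net effect is that you never need the implication $(ii)\Rightarrow(iii)$. The paper handles that step either by citing \cite[Lemma~1.4]{GGLN25} or via a basic-construction argument (showing $\sum_{sG_1}\lambda_G(s)e_{\varphi_G}\lambda_G(s)^*=1$); your route $(ii)\Rightarrow(iv)$ via a projection $p\in L(G)^{\varphi_G}\cap\dom(\varphi_G)$ and its GNS vector is more elementary and self-contained. In fact your argument can be shortened further: once $p\neq 0$ lies in $L(G)^{\varphi_G}\cap\sqrt{\dom}(\varphi_G)$, the vector $\hat p\in L^2(L(G),\varphi_G)$ already satisfies $\Delta_{\varphi_G}^{it}\hat p=\widehat{\sigma_t^{\varphi_G}(p)}=\hat p$, so the corner $\varphi_p$ and its GNS space are unnecessary.

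\textbf{Identification of the centralizer.} The paper obtains $L(G)^{\varphi_G}=\{\lambda_G(s):s\in\ker\Delta_G\}''$ using the $\varphi_G$-preserving conditional expectation $\mathcal{E}_{\varphi_G}$ (computing $\mathcal{E}_{\varphi_G}(\lambda_G(f))=\lambda_G(1_{G_1}f)$). Your spatial argument---compressing to $L^2(H)$, checking the compression lies in $\rho_H(H)'=L(H)$, and then annihilating the difference via $\sum_{Hs}M_{1_{Hs}}=1$---is a genuine alternative that avoids invoking the existence of $\mathcal{E}_{\varphi_G}$.

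\textbf{One imprecision.} Your sentence ``a measurable subgroup of a locally compact group has positive Haar measure if and only if it is open'' is not literally true without $\sigma$-compactness (see Example~\ref{exs:first_examples}(2) in the paper). What you actually use, and what suffices, is Lemma~\ref{lem:open_via_Haar_measure}: a closed subgroup is open iff it contains a Borel set of \emph{finite} positive measure. This is available in your $(iv)\Rightarrow(i)$ step because an $L^2$ eigenvector $f$ supported on a coset $sH$ yields such a set (e.g.\ $\{|f|>\epsilon\}$), and translating by $s^{-1}$ places it in $H$. Also, the relevant reference is Lemma~\ref{lem:open_via_Haar_measure} rather than Lemma~\ref{lem:dichotomy}.
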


\noindent Here \emph{strictly semifinite} means the restriction of the weight $\varphi_G$ to its centralizer subalgebra $L(G)^{\varphi_G}$ is still semifinite, and \emph{almost periodic}---a definition due to Connes \cite{Con72,Con74}---means the modular operator $\Delta_{\varphi_G}$ is diagonalizable. In general, the former is implied by the latter, but in the case of group von Neumann algebras the above theorem shows the converse also holds. In light of these equivalences, we call a group $G$ satisfying any of these conditions an \textbf{almost unimodular group}. In this paper, we study the properties of these groups and their von Neumann algebras.

The terminology ``almost unimodular group'' of course alludes to the almost periodicity of the Plancherel weight $\varphi_G$, but it is also meant to imply that the unimodular part $\ker{\Delta_G}$ strongly influences the structure of $G$ itself. Indeed, the openness of $\ker{\Delta_G}$ implies that
    \begin{align}\label{eqn:au_ses}
        1 \to \ker{\Delta_G} \to G \to \Delta_G(G) \to 1
    \end{align}
is a short exact sequence of locally compact groups, where $\Delta_G(G)$ is given the discrete topology. Thus, for example, $G$ is amenable if and only if $\ker{\Delta_G}$ is amenable (see \cite[Theorem 4.B]{Neu29} and also \cite[Proposition G.2.2]{BdlHV08}), and $G$ has the Haagerup property if and only if $\ker{\Delta_G}$ has the Haagerup property (see \cite[Proposition 2.5]{Jol00} or \cite[Proposition 6.1.5]{CCJJV01}). The influence of the unimodular part also manifests in the representation theory of $G$ (see Theorems~\ref{thm:sq_int_irreps}, \ref{thm:formal_degree_is_almost_periodic}, and \ref{thm:sq_int_factor_reps}), which we further discuss below. We also remark that this parallels the situation for von Neumann algebras: if $\varphi$ is an almost periodic weight on a von Neumann algebra $M$, then $(M^\varphi)'\cap M =(M^\varphi)'\cap M^\varphi$ (see \cite[Theorem 10]{Con72}). This can be interpreted as saying that the centralizer $M^\varphi$ is large relative to $M$, and, in particular, $M^\varphi$ being a factor guarantees that $M$ is also a factor.

The class of almost unimodular groups of course includes all \emph{unimodular} groups, but also all totally disconnected groups (see Example~\ref{exs:first_examples}.(\ref{ex:totally_disconnected})) and consequently all automorphism groups of connected locally finite graphs (see Example~\ref{exs:first_examples}.(\ref{ex:graph_automorphisms})). Further concrete non-unimodular examples can been found in Examples~\ref{exs:first_examples} and \ref{ex:interesting_group_vNas}. In Section~\ref{sec:permanence_properties} we study the permanence properties of this class and determine precisely which subgroups, quotients, continuous cocycle semidirect products, and fiber products yield almost unimodular groups (see Propositions~\ref{prop:almost_unimodular_subgroups}, \ref{prop:quotient_groups}, \ref{prop:cocycle_semidirect_products}, and \ref{prop:fiber_products}). Additionally, we show that the class of almost unimodular groups is the smallest class of locally compact groups that is closed under extensions by discrete groups and contains all unimodular groups (see Corollary~\ref{cor:class_of_almost_unimod_groups}).

In Section~\ref{sec:representation_theory}, we study the behavior of irreducible and factorial square integrable representations of almost unimodular groups. Such representations were studied for general (non-unimodular) locally compact groups by Duflo and Moore in \cite{DM76} and by Moore in \cite{Moo77}, respectively, where the notion of the so-called \emph{formal degree} was extended beyond the unimodular case. For an irreducible square integrable representation $(\pi,\H)$, this is given by an unbounded operator $D$ on $\H$, and for a factorial square integrable representation this is given by a faithful normal semifinite weight $\psi$ on $\pi(G)''$. For unimodular groups, these reduce to the identity operator and a tracial weight, respectively. We show that almost unimodularity tames these objects while still allowing them to be non-trivial: $D$ is diagonalizable (see Theorem~\ref{thm:sq_int_irreps}) and $\psi$ is almost periodic (see Theorem~\ref{thm:formal_degree_is_almost_periodic}). Almost unimodularity also forges connections between such representations of $G$ and representations of its unimodular part $\ker{\Delta_G}$. One example of this is that every irreducible square integrable representation of $G$ is induced by an irreducible square integrable representation of $\ker{\Delta_G}$ (see Theorem~\ref{thm:sq_int_irreps}). Another is the following:

\begin{thmalpha}[{Theorem~\ref{thm:sq_int_factor_reps}}]\label{introthm:B}
Let $G$ be a second countable almost unimodular group and let $(\pi_1, \H_1)$ be a factorial square integrable representation of $\ker{\Delta_G}$. Then the induced representation $\emph{Ind}_{\ker{\Delta_G}}^G(\pi_1,\H_1)$ of $G$ is factorial and square integrable.
\end{thmalpha}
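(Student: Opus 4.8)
The plan is to establish square integrability and factoriality of $\pi:=\operatorname{Ind}_N^G(\pi_1,\H_1)$ separately, writing $N:=\ker\Delta_G$ — open and normal in $G$ by almost unimodularity, with $\Delta_N\equiv 1$ so that $N$ is unimodular — and $Q:=G/N\cong\Delta_G(G)$, a countable (by second countability) discrete abelian subgroup of $\R_+$. Set $M_1:=\pi_1(N)''$; since $\pi_1$ is a square integrable factor representation of the unimodular group $N$, its formal degree is a faithful normal semifinite trace $\tau_1$ on $M_1$, and in particular $M_1$ is a semifinite factor. For square integrability of $\pi$ I would use that inducing the regular representation gives the regular representation, $\operatorname{Ind}_N^G\lambda_N\cong\lambda_G$ (transparent since $N$ is open), together with the fact that induction preserves direct sums, subrepresentations, and quasi-equivalence: being a square integrable factor representation, $\pi_1$ is quasi-equivalent to a subrepresentation of $\lambda_N$, so $\pi$ is quasi-equivalent to a subrepresentation of $\operatorname{Ind}_N^G\lambda_N\cong\lambda_G$, which — once $\pi$ is known to be factorial — is exactly the assertion that $\pi$ is square integrable.

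The core is factoriality, and the decisive point is a rescaling observation. For $g\in G\setminus N$ the automorphism $\operatorname{Ad}(g)$ of $N$ scales Haar measure by $\Delta_G(g)$: the identity $\Delta_G(g)=\Delta_{G/N}(gN)\cdot\operatorname{mod}(\operatorname{Ad}(g)|_N)$ collapses to $\Delta_G(g)=\operatorname{mod}(\operatorname{Ad}(g)|_N)$ because $Q$ is discrete, hence unimodular, and $\Delta_G(g)\neq 1$ because $g\notin N$. Changing variables by $n\mapsto gng^{-1}$ in the orthogonality relations for $\pi_1$ then shows that the conjugate representation $\pi_1^g:=\pi_1\circ\operatorname{Ad}(g)|_N$ is again square integrable, with formal degree $\Delta_G(g)^{\pm1}\tau_1$ on the common von Neumann algebra $\pi_1^g(N)''=\pi_1(N)''=M_1$. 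Consequently, whenever $\pi_1^g$ is quasi-equivalent to $\pi_1$, the implementing automorphism $\theta_g\in\Aut(M_1)$ carries $\tau_1$ to $\Delta_G(g)^{\pm1}\tau_1$; since inner automorphisms of a semifinite factor preserve its trace, $\theta_g$ is outer (and so is every nonzero power of $\theta_g$, which rescales $\tau_1$ by $\Delta_G(g)^{\pm k}\neq 1$).

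To finish, I pass to von Neumann algebras. Because $N$ is open and normal, $L(G)$ is a twisted (cocycle) crossed product $L(N)\rtimes_{\alpha,c}Q$ with $\alpha$ the conjugation action. Let $z_1\in Z(L(N))$ be the minimal (since $\pi_1$ is factorial) central projection with $L(N)z_1\cong M_1$ cut out by the quasi-equivalence class of $\pi_1$ inside $\lambda_N$, and let $H\leq Q$ be the stabilizer of this class under the conjugation action. Using $\operatorname{Ind}_N^G\lambda_N\cong\lambda_G$ and the fact that $\operatorname{Ind}_N^G$ identifies a representation of $N$ with all of its $G$-conjugates, one checks that $\pi$ is quasi-equivalent to the reduction of $\lambda_G$ by the central support $p:=\bigvee_{q\in Q}\alpha_q(z_1)\in Z(L(G))$ of $z_1$, so that $\pi(G)''\cong L(G)p=\bigl(L(N)\rtimes_{\alpha,c}Q\bigr)p$. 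The projections $\alpha_q(z_1)$ are pairwise equal or orthogonal and are indexed by $Q/H$, so $L(N)p\cong\bigoplus_{Q/H}M_1$ with $Q$ permuting the summands transitively with stabilizer $H$; by imprimitivity the reduced crossed product is then Morita equivalent to the twisted crossed product $M_1\rtimes_{\tilde\alpha,\tilde c}H$ and, in particular, is a factor if and only if the latter is. For $h\in H\setminus\{e\}$ the automorphism $\tilde\alpha_h$ is conjugation by a lift $g$ of $h$, with $\Delta_G(g)=h\neq1$, hence $\tilde\alpha_h=\theta_g$ is outer by the rescaling observation; and a twisted crossed product of a factor by a discrete group acting by pointwise outer automorphisms is a factor (a central element has vanishing Fourier coefficients off the identity by outerness, while its coefficient at the identity is central in $M_1$, hence scalar). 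Thus $\pi(G)''$ is a factor, and together with the first paragraph, $\pi$ is a square integrable factor representation.

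The step I expect to be the real obstacle is the middle one: identifying $\pi=\operatorname{Ind}_N^G\pi_1$ with the reduction $L(G)p$ — i.e.\ matching induction against the twisted crossed product structure of $L(G)$, correctly tracking how $G$-conjugation permutes quasi-equivalence classes, and carrying out the imprimitivity reduction of $\bigl(\bigoplus_{Q/H}M_1\bigr)\rtimes_{\alpha,c}Q$ to a twisted crossed product over $H$ — all routine but cocycle-heavy. By contrast the genuinely new ingredient — that square integrability together with $\Delta_G(g)\neq1$ forces conjugation to rescale the formal-degree trace and hence to be outer — is short. (The same content can also be organized through Mackey's analysis of representations induced from a normal subgroup, the $G$-orbit of $[\pi_1]$ in the quasi-dual of $N$ being countable and hence smooth; the outerness input is needed either way.)
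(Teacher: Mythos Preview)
Your proposal is correct and takes a genuinely different route from the paper. The paper realizes the induced representation spatially as $((\lambda_G\otimes 1)p,\,p(L^2(G)\otimes\mathcal{K}))$ for a suitable projection $p$, and then obtains factoriality from a short abstract lemma: if $N\leq M$ are von Neumann algebras with $Z(M)\subset Z(N)$, then a projection with minimal central support in $N$ automatically has minimal central support in $M$; the required inclusion $Z(L(G))\subset Z(L(\ker\Delta_G))$ is supplied directly by Connes' theorem that $(M^\varphi)'\cap M\subset M^\varphi$ for almost periodic $\varphi$. Your approach instead exhibits $\pi(G)''$ as (Morita equivalent to) a twisted crossed product $M_1\rtimes_{(\tilde\alpha,\tilde c)}H$ and proves factoriality by showing each $\tilde\alpha_h$ with $h\neq 1$ is outer via the trace-rescaling observation. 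The paper's route is shorter and sidesteps precisely the cocycle and imprimitivity bookkeeping you flag as the obstacle; your route makes the mechanism explicit---the modular function literally rescales the formal-degree trace on $M_1$, forcing outerness---which is the same phenomenon underlying the almost periodicity of the formal degree of $\pi$ established elsewhere in the paper. Your square-integrability argument (induction preserves quasi-equivalence and sends $\lambda_{\ker\Delta_G}$ to $\lambda_G$) agrees with the paper's, only phrased less spatially.
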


\noindent Since the left regular representation of $\ker{\Delta_G}$ induces the left regular representation of $G$, the above theorem can be interpreted as a generalization of the fact mentioned above that $L(\ker{\Delta_G})\cong L(G)^{\varphi_G}$ being a factor implies $L(G)$ is factor when $\varphi_G$ is almost periodic. We also remark that a covariance condition involving the compact dual group of $\Delta_G(G)$ allows one to detect precisely which representations of $G$ (square integrable or otherwise) are induced by a representation of $\ker{\Delta_G}$ (see Theorem~\ref{thm:reps_induced_from_kernel}).

In Section~\ref{sec:group_vNas}, we return to the study of group von Neumann algebras associated to almost unimodular groups. Since the Plancherel weights of such groups are strictly semifinite, the pair $(L(G),\varphi_G)$ falls within the scope of the authors previous work \cite{GGLN25} extending Murray--von Neumann dimension to faithful normal strictly semifinite weights. There the authors defined $(L(G),\varphi_G)$-modules to be modules over the basic construction $\<L(G),e_{\varphi_G}\>$ associated to the inclusion $L(G)^{\varphi_G} \leq L(G)$. In this context, the basic construction can always be identified with $L(G)\rtimes_\alpha \Delta_G(G)\hat{\ }$, where $\alpha\colon  \Delta_G(G)\hat{\ }\curvearrowright L(G)$ is an extension of the modular automorphism group of $\sigma^{\varphi_G}$ to the compact dual group of $\Delta_G(G)$ (see Theorem~\ref{thm:basic_construction_crossed_product}), and the covariance condition involving $\Delta_G(G)\hat{\ }$ mentioned above can also be used to detect which representations of $L(G)$ admit extensions to $\<L(G),e_{\varphi_G}\>$ (see Theorem~\ref{thm:extending_reps_to_basic_construction}). So from the perspective of the group $G$, an $(L(G),\varphi_G)$-module is merely a subrepresentation of $\lambda_G^{\oplus \infty}$ that is induced from a representation of $\ker{\Delta_G}$. We also show in Section~\ref{sec:group_vNas} that factoriality of $L(\ker{\Delta_G})$ allows one to determine the type of $L(G)$ from $\Delta_G(G)$ (see Corollary~\ref{cor:factoriality}), and it forces all intermediate von Neumann algebras $L(\ker{\Delta_G})\leq P \leq L(G)$ to be of the form $P=L(H)$ for some closed intermediate group $\ker{\Delta_G}\leq H\leq G$ (see Theorem~\ref{thm:intermediate_algebras}).

Our most striking results concern how the Murray--von Neumann dimension of such $(L(G),\varphi_G)$-modules scales for subgroups $H\leq G$ with \emph{finite covolume} (which are necessarily almost unimodular; see Proposition~\ref{prop:finite_covolume_subgroups}). This means that the quotient space $G/H$ admits a finite (non-zero) $G$-invariant Radon measure $\mu_{G/H}$. For given left Haar measures $\mu_G$ and $\mu_H$ on $G$ and $H$, respectively, the measure $\mu_{G/H}$ can be normalized to satisfy a natural disintegration formula (\ref{eqn:quotient_Haar_measure_formula}), and in this case one defines the \emph{covolume} as $[\mu_G : \mu_H]:=\mu_{G/H}(G/H)$.

\begin{thmalpha}[{Theorem~\ref{thm:covolume_dimension_formula}}]\label{introthm:C}
Let $G$ be a second countable almost unimodular group with a finite covolume subgroup $H\leq G$, and suppose $\Delta_H(H)=\Delta_G(G)$.  Then there exists a unique injective, normal, unital $*$-homomorphism $\theta\colon \<L(H),e_{\varphi_H}\>\to \<L(G),e_{\varphi_G}\>$ satisfying
    \[
        \theta(\lambda_H(t)) = \lambda_G(t) \qquad t\in H, \qquad \text{ and } \qquad \theta(e_{\varphi_H}) = e_{\varphi_G}.
    \]
Moreover, if $(\pi,\H)$ is a left $(L(G),\varphi_G)$-module, then $(\pi\circ \theta, \H)$ is a left $(L(H),\varphi_H)$-module with
    \[
        \dim_{(L(H),\varphi_H)}(\pi\circ \theta, \H) = [\mu_G : \mu_H] \dim_{(L(G),\varphi_G)}(\pi,\H).
    \]
\end{thmalpha}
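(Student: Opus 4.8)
The plan is to construct $\theta$ from the crossed-product description of the two basic constructions, and then to deduce the dimension identity by pushing everything down to the unimodular parts $\ker\Delta_G$ and $\ker\Delta_H$, where it becomes the covolume scaling of the Murray--von Neumann dimension for unimodular groups.

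\emph{Construction of $\theta$.} First I would record that a finite covolume subgroup is almost unimodular (Proposition~\ref{prop:finite_covolume_subgroups}) and satisfies $\Delta_G|_H=\Delta_H$, so that $\ker\Delta_H=H\cap\ker\Delta_G$ and, by hypothesis, $\Delta_G(H)=\Delta_H(H)=\Delta_G(G)$. Disintegrating $L^2(G)$ over the $G$-invariant measure on $H\backslash G$ identifies $\lambda_G|_H$ with $\lambda_H\otimes 1$, so $\lambda_H(t)\mapsto\lambda_G(t)$ extends to a normal injective unital $*$-homomorphism $\iota\colon L(H)\to L(G)$. Next, writing $K:=\widehat{\Delta_G(G)}=\widehat{\Delta_H(H)}$ for the common compact dual group and $\alpha\colon K\curvearrowright L(G)$, $\beta\colon K\curvearrowright L(H)$ for the extensions of the modular flows featuring in Theorem~\ref{thm:basic_construction_crossed_product} (so $\alpha_\chi(\lambda_G(s))=\chi(\Delta_G(s))\lambda_G(s)$ and likewise for $\beta$), I would check on generators that $\iota$ is $K$-equivariant --- this uses $\Delta_G(t)=\Delta_H(t)$ for $t\in H$ --- and therefore extends to a normal injective unital $*$-homomorphism $L(H)\rtimes_\beta K\to L(G)\rtimes_\alpha K$ that is the identity on the common copy of $L(K)$. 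Transporting this through the isomorphisms $\<L(H),e_{\varphi_H}\>\cong L(H)\rtimes_\beta K$ and $\<L(G),e_{\varphi_G}\>\cong L(G)\rtimes_\alpha K$ of Theorem~\ref{thm:basic_construction_crossed_product}, under which both Jones projections correspond to the canonical averaging projection $\int_K u_k\,dk$ over $K$, produces $\theta$ with $\theta(\lambda_H(t))=\lambda_G(t)$ and $\theta(e_{\varphi_H})=e_{\varphi_G}$. Uniqueness is then automatic, since $\<L(H),e_{\varphi_H}\>$ is generated by $\{\lambda_H(t):t\in H\}\cup\{e_{\varphi_H}\}$ and normal $*$-homomorphisms agreeing on a generating set coincide.

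\emph{Reduction to the unimodular parts.} That $(\pi\circ\theta,\H)$ is an $(L(H),\varphi_H)$-module is immediate, $\pi\circ\theta$ being a normal unital representation of $\<L(H),e_{\varphi_H}\>$. For the dimension, I would use that by the construction of the Murray--von Neumann dimension in \cite{GGLN25}, together with the identification $L(G)^{\varphi_G}=L(\ker\Delta_G)$ with $\varphi_G|_{L(\ker\Delta_G)}$ the Plancherel trace of the unimodular group $\ker\Delta_G$ (Theorem~\ref{thm:when_is_plancherel_weight_almost_periodic}), the number $\dim_{(L(G),\varphi_G)}(\pi,\H)$ equals the Murray--von Neumann dimension of the compression $\H_0:=e_{\varphi_G}\H$ as an $L(\ker\Delta_G)$-module relative to that Plancherel trace; and similarly $\dim_{(L(H),\varphi_H)}(\pi\circ\theta,\H)$ is computed from $e_{\varphi_H}\H$ over $L(\ker\Delta_H)$. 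Now $\theta(e_{\varphi_H})=e_{\varphi_G}$, so the two compressions coincide, $e_{\varphi_H}\H=\H_0$, and $\theta$ carries $L(\ker\Delta_H)=e_{\varphi_H}\<L(H),e_{\varphi_H}\>e_{\varphi_H}$ onto $\{\lambda_G(t):t\in\ker\Delta_H\}''=\iota(L(\ker\Delta_H))\subseteq L(\ker\Delta_G)$; hence $(\pi\circ\theta,\H)$ is governed by $\H_0$ regarded, via $\iota$, as an $L(\ker\Delta_H)$-module. Along the way I would also verify $[\mu_{\ker\Delta_G}:\mu_{\ker\Delta_H}]=[\mu_G:\mu_H]$, which follows from the disintegration formula (\ref{eqn:quotient_Haar_measure_formula}) applied to the $G$-equivariant homeomorphism $\ker\Delta_G/\ker\Delta_H\cong G/H$ (a homeomorphism precisely because $\Delta_H(H)=\Delta_G(G)$). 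The theorem then reduces to the assertion that
\[
\dim_{L(\ker\Delta_H)}(\H_0)=[\mu_{\ker\Delta_G}:\mu_{\ker\Delta_H}]\,\dim_{L(\ker\Delta_G)}(\H_0)
\]
for every $L(\ker\Delta_G)$-module $\H_0$, with dimensions taken against the respective Plancherel traces and $L(\ker\Delta_H)$ acting via $\iota$.

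\emph{The unimodular covolume scaling.} The displayed identity is the covolume scaling of the Murray--von Neumann dimension for the unimodular pair $\ker\Delta_H\le\ker\Delta_G$ of finite covolume, and establishing it is the step I expect to be the main obstacle. The natural approach is to disintegrate $L^2(\ker\Delta_G)$ over the finite invariant measure on $\ker\Delta_H\backslash\ker\Delta_G$ to write $\lambda_{\ker\Delta_G}|_{\ker\Delta_H}\cong\lambda_{\ker\Delta_H}\otimes 1$, so that $\iota(L(\ker\Delta_H))=L(\ker\Delta_H)\otimes 1$ on $L^2(\ker\Delta_H)\otimes L^2(\ker\Delta_H\backslash\ker\Delta_G)$, and then to track how the canonical trace on the commutant of $L(\ker\Delta_G)$ transforms under this tensor decomposition. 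The delicate point --- and where I would be most careful --- is that the multiplicity space $L^2(\ker\Delta_H\backslash\ker\Delta_G)$ is typically infinite-dimensional as a Hilbert space even though the covolume is finite, so the covolume must be made to appear as the total mass $\int_{\ker\Delta_H\backslash\ker\Delta_G}1$ of the invariant measure (entering through integration on the diagonal of that space), rather than as an operator trace of the identity; in the discrete case this subtlety evaporates and the multiplicity space is simply an index-dimensional Hilbert space. This unimodular statement is the analogue, for the regular representation, of the Atiyah--Schmid dimension computation, and may alternatively be imported from the existing theory of $L^2$-invariants of locally compact groups. Granting it, the two reductions of the previous paragraph combine to yield $\dim_{(L(H),\varphi_H)}(\pi\circ\theta,\H)=[\mu_G:\mu_H]\dim_{(L(G),\varphi_G)}(\pi,\H)$.
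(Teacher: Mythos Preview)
Your strategy is sound and genuinely different from the paper's, but the weight of the argument has been pushed into a step you leave as a sketch.

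Your construction of $\theta$ via the crossed-product identifications of Theorem~\ref{thm:basic_construction_crossed_product} is correct and cleaner than what the paper does: since $\Delta_H(H)=\Delta_G(G)$ gives a common dual $K$, the $K$-equivariance of $\iota\colon L(H)\hookrightarrow L(G)$ extends to the crossed products, and both Jones projections do land on the averaging projection $\int_K u_\gamma\,d\gamma$. The paper instead builds $\theta$ spatially, using a Borel section $\sigma\colon G/H\to G$ to write down an explicit unitary $w\colon L^2(G)\to L^2(H)\otimes L^2(G/H,\mu_{G/H})$ with $w\rho_G(t)w^*=\rho_H(t)\otimes 1$, and then sets $\theta(x)=J_{\varphi_G}w^*[(J_{\varphi_H}xJ_{\varphi_H})\otimes 1]wJ_{\varphi_G}$. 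Your reduction to the unimodular parts via the identity $\dim_{(L(G),\varphi_G)}(\pi,\H)=\dim_{L(\ker\Delta_G)}(e_{\varphi_G}\H)$ is also valid (it falls out of $\<L(G),e_{\varphi_G}\>'=J_{\varphi_G}L(\ker\Delta_G)J_{\varphi_G}$ together with $\varphi_G|_{L(\ker\Delta_G)}=\varphi_{\ker\Delta_G}$), and the covolume identity $[\mu_{\ker\Delta_G}:\mu_{\ker\Delta_H}]=[\mu_G:\mu_H]$ is correct under the hypothesis $\Delta_H(H)=\Delta_G(G)$.

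The gap is your final paragraph. The unimodular covolume scaling $\dim_{L(\ker\Delta_H)}(\H_0)=[\mu_{\ker\Delta_G}:\mu_{\ker\Delta_H}]\dim_{L(\ker\Delta_G)}(\H_0)$ is not something you can simply import: the paper's introduction notes that the analogous formula fails for abstract modules, and the Hilbert-module version for general finite covolume closed subgroups (as opposed to lattices) is precisely what needs proving. Your sketch --- disintegrate, then track how the trace transforms --- is correct in outline and is in fact exactly what the paper does, but carried out at the level of $G$ and $H$ directly rather than after reducing to the kernels. The key device you are missing is the operator-valued weight $T\colon L(G)\to L(H)$ with $\varphi_G=\varphi_H\circ T$: the paper shows that $(1\otimes\omega_{h_1,h_2})(wxw^*)=T[\lambda_{G/H}(h_2)^*x\lambda_{G/H}(h_1)]$, and then exhibits the family $\{\lambda_{G/H}(b):b\in\mathcal B\}$ (with $\mathcal B$ an orthonormal basis for $L^2(G/H)$) as a Pimsner--Popa-type basis satisfying $\sum_b\lambda_{G/H}(b)\lambda_{G/H}(b)^*=[\mu_G:\mu_H]$. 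This is how the covolume appears as the total mass rather than as a Hilbert-space dimension --- the very subtlety you flag but do not resolve. So your reduction buys conceptual clarity and a slicker construction of $\theta$, but it does not avoid the main computation; the paper's advantage is that its spatial unitary $w$ is already adapted to performing that computation, and it handles the more general case $\Delta_H(H)\lneq\Delta_G(G)$ in the same breath.
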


\noindent We note that the assumption $\Delta_H(H)=\Delta_G(G)$ can be removed. Indeed, $H\leq G$ having finite covolume implies $\Delta_H(H)$ is a finite index subgroup of $\Delta_G(G)$ (see Proposition~\ref{prop:finite_covolume_subgroups}), and so $\theta$ can be modified in this case by mapping $e_{\varphi_H}$ to an amplification of $e_{\varphi_G}$ determined by a choice of coset representatives for $\Delta_H(H)\leq \Delta_G(G)$ (see Theorem~\ref{thm:covolume_dimension_formula} for this more general statement). Also, the assumption that $G$ is second countable might not be necessary since it is primarily used to obtain a Borel section $\sigma\colon G/H\to G$ through work of Mackey from \cite{Mac52}. In the unimodular case, the analogue of the above scaling formula for \emph{abstract} modules is known to fail in general (see \cite[Example 4.6]{PetThesis}), but it has been established in a certain instances to obtain scaling formulas for $L^2$-Betti numbers (see \cite[Theorem 2]{Pet13}, \cite[Theorem B]{KPV15}, and \cite[Section 5.5]{PST18}).
    
To prove the scaling formula in Theorem~\ref{introthm:C}, one first relates the Plancherel weights $\varphi_G$ and $\varphi_H$ via an operator valued weight from $L(G)$ to $L(H)$: $\varphi_G=\varphi_H\circ T$. That such a $T$ exists is a consequence of $G/H$ admitting a $G$-invariant measure, which, moreover, provides a convenient source of elements in the domain of $T$ and allows one to compute $\varphi_H\circ T$ as an amplification of $\varphi_H$. This amplification will be infinite whenever $[G:H]$ is infinite, but the finiteness of $[\mu_G : \mu_H]$ ensures enough control to obtain the desired formula.

As an application of Theorem~\ref{introthm:C}, we generalize the Atiyah--Schmid formula to the setting of second countable almost unimodular groups.

\begin{thmalpha}[{Theorem~\ref{thm:formula}}]\label{introthm:D}
Let $G$ be a second countable almost unimodular group with a finite covolume subgroup $H\leq G$, and suppose $\Delta_H(H)=\Delta_G(G)$. Let $\theta\colon \<L(H),e_{\varphi_H}\>\to \<L(G),e_{\varphi_G}\>$ be as in Theorem~\ref{introthm:C}. If $(\pi,\H)$ is an irreducible square integrable representation of $G$, then: it is induced by an irreducible square integrable representation  $(\pi_1,\H_1)$ of $\ker{\Delta_G}$; it extends to a representation $(\widetilde{\pi},\H)$ of $\<L(G),e_{\varphi_G}\>$; and one has
    \[
        \dim_{(L(H),\varphi_H)}(\widetilde{\pi}\circ \theta, \H) = d_{\pi_1} [\mu_G: \mu_H],
    \]
where $d_{\pi_1}$ is the formal degree of $(\pi_1,\H_1)$ with respect to the restriction of $\mu_G$ to $\ker{\Delta_G}$.
\end{thmalpha}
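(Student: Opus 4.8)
The plan is to assemble the statement from the structural results proved above; the one genuinely new point is the identification of the Murray--von Neumann dimension of an induced square integrable representation with the formal degree of the inducing representation of $\ker{\Delta_G}$.

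First, that $(\pi,\H)$ is induced by an irreducible square integrable representation $(\pi_1,\H_1)$ of $\ker{\Delta_G}$ is exactly Theorem~\ref{thm:sq_int_irreps}. Since $(\pi,\H)$ is square integrable it is contained in $\lambda_G$ (via a matrix-coefficient intertwiner), hence a subrepresentation of $\lambda_G^{\oplus\infty}$, and in particular it extends to a normal representation of $L(G)$ with $\pi(L(G))''=B(\H)$ (whence $\pi(L(G))'=\C 1$). As $\pi=\operatorname{Ind}_{\ker{\Delta_G}}^{G}\pi_1$, it satisfies the covariance condition over $\Delta_G(G)\hat{\ }$ of Theorem~\ref{thm:reps_induced_from_kernel}, so by Theorem~\ref{thm:extending_reps_to_basic_construction} it extends to a representation $(\widetilde{\pi},\H)$ of $\left\langle L(G),e_{\varphi_G}\right\rangle$ --- concretely, the extension determined by the canonical action of the compact dual $\Delta_G(G)\hat{\ }$ on the induced representation. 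Thus $(\widetilde{\pi},\H)$ is a left $(L(G),\varphi_G)$-module, and since the hypothesis $\Delta_H(H)=\Delta_G(G)$ is exactly that of Theorem~\ref{introthm:C}, that result applies and gives $\dim_{(L(H),\varphi_H)}(\widetilde{\pi}\circ\theta,\H)=[\mu_G:\mu_H]\,\dim_{(L(G),\varphi_G)}(\widetilde{\pi},\H)$. It remains to show $\dim_{(L(G),\varphi_G)}(\widetilde{\pi},\H)=d_{\pi_1}$, the formal degree being taken with respect to $\mu_G|_{\ker{\Delta_G}}$.

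For the latter I would use the crossed product description $\left\langle L(G),e_{\varphi_G}\right\rangle\cong L(G)\rtimes_\alpha\Delta_G(G)\hat{\ }$ of Theorem~\ref{thm:basic_construction_crossed_product} together with the identifications $L(G)^{\varphi_G}\cong L(\ker{\Delta_G})$ and $\varphi_G|_{L(\ker{\Delta_G})}=\varphi_{\ker{\Delta_G}}$ from Theorem~\ref{introthm:A} (note $\ker{\Delta_G}$ is unimodular, so $\varphi_{\ker{\Delta_G}}$ is a trace). Realize $\H=\operatorname{Ind}_{\ker{\Delta_G}}^{G}\H_1$ on $L^2$-sections $G\to\H_1$; since $\ker{\Delta_G}$ is open, $G/\ker{\Delta_G}$ is discrete and is identified with $\Delta_G(G)$, and the canonical action of $\Delta_G(G)\hat{\ }$ multiplies a section $f$ at $x$ by $\chi(\Delta_G(x))$ (up to the conventions fixing $\alpha$). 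Averaging over the compact group $\Delta_G(G)\hat{\ }$, the operator $\widetilde{\pi}(e_{\varphi_G})$ is the projection onto sections supported on the identity coset, so $\widetilde{\pi}(e_{\varphi_G})\H\cong(\pi_1,\H_1)$ as a module over $L(G)^{\varphi_G}\cong L(\ker{\Delta_G})$. Since the dimension of \cite{GGLN25} is computed by compressing by $e_{\varphi_G}$ and then taking the Murray--von Neumann dimension over $L(G)^{\varphi_G}$ relative to $\varphi_G|_{L(G)^{\varphi_G}}$ --- with both normalizations fixed so that $L^2(G)$ and $L^2(\ker{\Delta_G})$ each have dimension $1$ --- it follows that $\dim_{(L(G),\varphi_G)}(\widetilde{\pi},\H)=\dim_{L(\ker{\Delta_G})}(\pi_1,\H_1)$, the latter with respect to the Plancherel trace of $\mu_G|_{\ker{\Delta_G}}$. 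Finally, for an irreducible square integrable representation of the unimodular group $\ker{\Delta_G}$ the Murray--von Neumann dimension of its representation space relative to this trace equals its formal degree $d_{\pi_1}$ --- the classical computation via the matrix-coefficient isometry $\H_1\hookrightarrow L^2(\ker{\Delta_G})$, $\xi\mapsto d_{\pi_1}^{1/2}\overline{\left\langle \xi,\pi_1(\cdot)\eta\right\rangle}$, together with the Schur orthogonality relations of \cite{DM76}. Chaining the equalities yields the claimed formula.

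I expect the main obstacle to be the penultimate step: extracting from \cite{GGLN25} the precise sense in which $\dim_{(L(G),\varphi_G)}$ localizes at the Jones projection $e_{\varphi_G}$ and agrees, after compression, with the Plancherel-trace Murray--von Neumann dimension over $L(\ker{\Delta_G})$, with the normalizations matched so that no spurious constant (e.g.\ an index attached to $\Delta_G(G)\hat{\ }$) intervenes. The technical heart is to check that the natural $\Delta_G(G)\hat{\ }$-action on $\operatorname{Ind}_{\ker{\Delta_G}}^{G}\H_1$ coincides with the one implementing $\alpha$ under the isomorphism of Theorem~\ref{thm:basic_construction_crossed_product}, so that $\widetilde{\pi}(e_{\varphi_G})$ really is the coordinate projection onto the identity coset; once that identification is in hand, the remaining ingredients are either direct citations or the classical Duflo--Moore computation.
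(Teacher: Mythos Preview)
Your reduction to the case $H=G$ via Theorem~\ref{introthm:C} and your assembly of the structural pieces (Theorems~\ref{thm:sq_int_irreps}, \ref{thm:reps_induced_from_kernel}, \ref{thm:extending_reps_to_basic_construction}) matches the paper exactly. The genuine divergence is in how you compute $\dim_{(L(G),\varphi_G)}(\widetilde{\pi},\H)=d_{\pi_1}$.

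The paper does this by a direct calculation: it picks a partition of unity $\{p_i=\lambda_G(\xi_i)\}\subset L(\ker{\Delta_G})\cap\dom(\varphi_G)$ (available by strict semifiniteness), proves the reproducing-type identity $\|\eta\|^2=\sum_i\int_G|\langle\eta,\lambda_G(s)\xi_i\rangle|^2\,d\mu_G(s)$ on $L^2(G)$, and then combines this with a standard intertwiner $v\colon\H\to L^2(G)$ and the Duflo--Moore orthogonality relation for the formal degree operator $D$ (using $D^{-1/2}v^*\xi_i=d_{\pi_1}^{-1/2}v^*\xi_i$ since $\xi_i$ is supported in $\ker{\Delta_G}$). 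Your route instead reduces to the classical unimodular computation by identifying $\dim_{(L(G),\varphi_G)}(\widetilde{\pi},\H)$ with $\dim_{L(\ker{\Delta_G})}(\pi_1,\H_1)$ and then quoting the Atiyah--Schmid/Duflo--Moore formula for $\ker{\Delta_G}$.

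Your localization claim is correct, and the concern you flag about spurious constants is unfounded: the key point you should make explicit is that the commutant of $\langle L(G),e_{\varphi_G}\rangle$ in $B(L^2(G))$ is $J_{\varphi_G}L(G)^{\varphi_G}J_{\varphi_G}$, so for any standard intertwiner $v$ one has $(J_{\varphi_G}\otimes 1)vv^*(J_{\varphi_G}\otimes 1)\in L(G)^{\varphi_G}\bar\otimes B(\K)=L(\ker{\Delta_G})\bar\otimes B(\K)$. Since $\varphi_G|_{L(\ker{\Delta_G})}$ is precisely the Plancherel trace for $\mu_G|_{\ker{\Delta_G}}$ (Theorem~\ref{introthm:A}), the defining expression $(\varphi_G\otimes\Tr_\K)[(J_{\varphi_G}\otimes 1)vv^*(J_{\varphi_G}\otimes 1)]$ is already the $L(\ker{\Delta_G})$-dimension of $(e_{\varphi_G}\otimes 1)v\H=v\H_1$ --- no rescaling appears. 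The identification of $\widetilde{\pi}(e_{\varphi_G})$ with the projection onto the identity-coset sections follows from the explicit formula in Theorem~\ref{thm:extending_reps_to_basic_construction} together with the description of $U$ in Theorem~\ref{thm:reps_induced_from_kernel}. Your approach is arguably more conceptual (it cleanly isolates the unimodular input), while the paper's direct computation avoids invoking the classical result as a black box and makes the role of strict semifiniteness visible at the level of the partition of unity.
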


\noindent This formula recovers \cite[Equation (3.3)]{AS77} in work of Atiyah and Schmid in the case that $H=\Gamma$ is a lattice in a (necessarily unimodular) group $G$ (see also \cite[Theorem 3.3.2]{GdlHJ89}). As with Theorem~\ref{introthm:C}, the assumption $\Delta_H(H)=\Delta_G(G)$ can be dropped, albeit at the cost of an additional scaling factor determined by a choice of coset representatives for $\Delta_H(H)\leq \Delta_G(G)$ (see Theorem~\ref{thm:formula} for this more general statement). 

By Theorem~\ref{introthm:C}, the proof of Theorem~\ref{introthm:D} is reduced to the case $H=G$, and so amounts to computing the Murray--von Neumann dimension of an irreducible square integrable representation. Notably, this computation relies very explicitly on the strict semifiniteness of $\varphi_G$, and therefore reinforces the need for $G$ to be almost unimodular through Theorem~\ref{introthm:A}.

\subsection*{Acknowledgments}
We wish to thank the following people for helpful discussions related to this work: Ionut Chifan, Rolando de Santiago, Adriana Fernández Quero, Michael Hartglass, Ben Hayes, Adrian Ioana, Srivatsav Kunnawalkam Elayavalli, and David Penneys. We especially thank David Jekel for his numerous suggestions for improving this article.  Both authors were supported by NSF grant DMS-2247047.


\section{Preliminaries}

Throughout we let $G$ denote a locally compact group, which is always assumed to be Hausdorff. We will use lattice notation for the collection of closed subgroups of $G$. That is, we write $H\leq G$ to denote that $H$ is a closed subgroup of $G$, we write $H_1\vee H_2$ for the closed subgroup generated by $H_1,H_2\leq G$, etc. Similarly for a von Neumann algebra $M$ and its collection of (unital) von Neumann subalgebras and for a Hilbert space $\H$ and its collection of closed subspaces. All homomorphisms between groups are assumed to also be continuous and all isomorphisms are assumed to also be homeomorphisms. In particular, all representations on Hilbert spaces are assumed to be strongly continuous. A representation of a von Neumann algebra $M$ will always mean a normal unital $*$-homomorphism $\pi\colon M\to B(\H)$.

Throughout, $\mu_G$ will denote a \emph{left Haar measure} on $G$, which is a non-trivial, left translation invariant Radon measure. We follow the convention in \cite{Fol16} and take a \emph{Radon measure} to be a Borel measure that is finite on compact sets, outer regular on Borel sets, and inner regular on open sets. Left Haar measures always exist and are unique up to scaling. Additionally, $\mu_G(U)>0$ for all non-empty open sets $U$, $\mu_G(K)<\infty$ for all compact sets $K$, and the inner regularity of $\mu_G$ extends to all $\sigma$-finite subsets (see \cite[Section 2.2]{Fol16} or \cite[Section 11]{HR79} for further details). For $1\leq p\leq \infty$ we denote by $L^p(G)$ the $L^p$-space of $G$ with respect to left Haar measures, and for a given left Haar measure $\mu_G$ we will write $\|f\|_{L^p(\mu_G)}$ for the associated $p$-norm, whereas $\|f\|_\infty$ is the unambiguous $\infty$-norm. We also denote by $\mathcal{B}(G)$ the Borel $\sigma$-algebra on $G$.

The following lemmas are likely well-known to experts,  but will be useful in Section~\ref{sec:almost_unimodular_groups}.

\begin{lem}\label{lem:open_via_Haar_measure}
Let $G$ be a locally compact group equipped with a left Haar measure $\mu_G$. For a closed subgroup $H\leq G$, the following are equivalent:
    \begin{enumerate}[label=(\roman*)]
        \item $H$ is open;
        
        \item $\mu_G|_{\B(H)}$ is a left Haar measure on $H$;
        
        \item there exists a Borel subset $E\subset H$ with $0<\mu_G(E)<\infty$.
    \end{enumerate}
\end{lem}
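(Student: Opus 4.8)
The plan is to prove the three equivalences cyclically, $(i)\Rightarrow(ii)\Rightarrow(iii)\Rightarrow(i)$, with essentially all the content in the last implication. For $(i)\Rightarrow(ii)$, I would use that an open subgroup is automatically closed and observe that, $H$ being open in $G$, the Borel $\sigma$-algebra $\mathcal{B}(H)$ consists exactly of those $B\in\mathcal{B}(G)$ with $B\subseteq H$, while the sets open (resp.\ compact) in $H$ are precisely the subsets of $H$ that are open (resp.\ compact) in $G$. Consequently the restriction $\mu_G|_{\mathcal{B}(H)}$ directly inherits from $\mu_G$ the three defining properties of a Radon measure on $H$ — finiteness on compact sets, outer regularity on Borel sets, and inner regularity on open sets. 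It is left $H$-invariant because $\mu_G$ is left $G$-invariant, and it is non-trivial because $H$ is a non-empty open subset of $G$, so $\mu_G(H)>0$. Hence $\mu_G|_{\mathcal{B}(H)}$ is a left Haar measure on $H$.

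For $(ii)\Rightarrow(iii)$, since $H$ is a closed subgroup of a locally compact group it is itself locally compact, so it admits a compact neighborhood $K$ of the identity. Finiteness on compacts gives $\mu_G(K)<\infty$, and applying to the non-empty open set $\operatorname{int}_H(K)$ the fact (recorded in the preliminaries) that a non-trivial left Haar measure is positive on non-empty open sets yields $\mu_G(K)\geq \mu_G(\operatorname{int}_H K)>0$. So $E:=K$ works.

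The hard part will be $(iii)\Rightarrow(i)$, which is a Steinhaus-type argument. Given a Borel set $E\subseteq H$ with $0<\mu_G(E)<\infty$, the finiteness condition $\mu_G(E)<\infty$ ensures $\1_E\in L^2(G)$, and I would consider the function $\phi(x):=\langle \lambda_G(x)\1_E,\1_E\rangle_{L^2(G)}=\mu_G(xE\cap E)$. Strong continuity of the left regular representation makes $\phi$ continuous, and $\phi(e)=\mu_G(E)>0$, so $V:=\{x\in G:\phi(x)>0\}$ is an open neighborhood of the identity. For $x\in V$ we have $xE\cap E\neq\emptyset$, hence $x\in EE^{-1}\subseteq HH^{-1}=H$ since $E\subseteq H$ and $H$ is a subgroup; thus $V\subseteq H$. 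A subgroup containing a neighborhood of the identity is open (translate $V$ by an arbitrary $h\in H$), so $H$ is open. The only points requiring care here are the membership $\1_E\in L^2(G)$ — precisely where the finiteness of $\mu_G(E)$ is used — and the continuity of $\phi$, both of which follow routinely from the strong continuity of $\lambda_G$.
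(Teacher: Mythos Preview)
Your proof is correct and follows essentially the same cyclic strategy as the paper, with the Steinhaus-type argument for $(iii)\Rightarrow(i)$ being the only substantive step. The paper's version differs only cosmetically: it uses $1_E\in L^1(G)\cap L^\infty(G)$ and the convolution $f=1_E*1_E$ (continuous with $\supp(f)\subset E\cdot E\subset H$) rather than your matrix coefficient $\phi(x)=\langle\lambda_G(x)1_E,1_E\rangle$ in $L^2(G)$, but both produce a continuous function positive at $e$ whose support lies in $H$.
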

\begin{proof} \textbf{(i)$\Rightarrow$(ii):} The open subsets of $H$ are open in $G$ and therefore $\mu_G|_{\B(H)}$ is inner regular on open sets. The remaining properties are inherited directly from $\mu_G$.\\

\noindent \textbf{(ii)$\Rightarrow$(iii):} Since left Haar measures are non-trivial, we must have $\mu_G(H)>0$. Inner regularity then yields a compact set $K\subset H$ with $0<\mu_G(K)< \infty$.\\

\noindent \textbf{(iii)$\Rightarrow$(i):} Suppose $0< \mu_G(E)<\infty$ for some Borel set $E\subset H$. Then $1_E \in L^1(G)\cap L^\infty(G)$ so that $f:=1_E* 1_E$ is a non-trivial, positive, continuous function with $\supp(f) \subset E\cdot E \subset H$. Hence $U:=f^{-1}(0,\infty) \subset H$ is a non-empty open set in $G$, and therefore $H$ is open.
\end{proof}

\begin{lem}\label{lem:dichotomy}
Suppose $A$ is a locally compact abelian group and $\pi\colon A\to \R_+$ is a continuous injective homomorphism. Then either $\pi$ is a isomorphism of locally compact groups or $A$ is discrete.
\end{lem}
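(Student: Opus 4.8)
The plan is to invoke the structure theory of locally compact abelian groups. Composing with the topological group isomorphism $\log\colon \R_+\to(\R,+)$, we may assume the codomain is $(\R,+)$, so that $\pi\colon A\to\R$ is a continuous injective homomorphism. Every locally compact group contains an open, compactly generated subgroup (e.g. the subgroup generated by a compact symmetric neighborhood of the identity), so fix such a subgroup $A'\leq A$. By the structure theorem for compactly generated locally compact abelian groups there is a topological isomorphism $A'\cong \R^n\times \Z^m\times K$ for some integers $n,m\geq 0$ and some compact abelian group $K$.

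I would then use injectivity of $\pi$ to strip away all but a single copy of $\R$. Since $\pi(K)$ is a compact subgroup of $\R$ it is trivial, so injectivity forces $K=\{e\}$. The restriction of $\pi$ to the Euclidean factor $\R^n$ is a continuous homomorphism $\R^n\to\R$, hence $\R$-linear, and it is injective; thus $n\leq 1$. If $n=0$ then $A'\cong\Z^m$ is discrete, and since $A'$ is open in $A$ this makes $A$ discrete, giving the second alternative. If $n=1$, identify $A'\cong\R\times\Z^m$ and note that $\pi(x,0)=cx$ for some $c\neq 0$. For any $g\in\Z^m\setminus\{0\}$, injectivity of $\pi$ forces $d_g:=\pi(0,g)\neq 0$, but then $\pi(d_g c^{-1},0)=d_g=\pi(0,g)$ contradicts injectivity; hence $m=0$ and $A'\cong\R$.

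It remains to handle the case $A'\cong\R$, where I claim $\pi$ is a topological isomorphism onto $\R$. Via the identification $A'\cong\R$, the restriction $\pi|_{A'}$ has the form $x\mapsto cx$ with $c\neq 0$, hence is a topological isomorphism of $A'$ onto $\R$; in particular $\pi(A)\supseteq\pi(A')=\R$, so $\pi$ is a continuous bijective homomorphism. To get openness, let $U\subseteq A$ be open and nonempty and pick $a\in U$. Then $a^{-1}U\cap A'$ is an open neighborhood of the identity in $A'$, so $W:=\pi(a^{-1}U\cap A')$ is open in $\R$, and by injectivity of $\pi$ together with $\pi(A')=\R$ one computes $W=-\pi(a)+\pi(U)$; hence $\pi(U)=\pi(a)+W$ is open. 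Therefore $\pi$ is an open continuous bijective homomorphism, i.e. an isomorphism of locally compact groups.

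The only substantive ingredient is the structure theorem for compactly generated locally compact abelian groups; the rest is elementary bookkeeping. The steps requiring the most attention are the case analysis on $n$—where one must use injectivity of $\pi$ on all of $A$, not merely on $A'$ or on the Euclidean factor, in order to eliminate a nontrivial free part when $n=1$—and the concluding openness argument, which works only because $A'$ was arranged to be open in $A$ with $\pi|_{A'}$ a homeomorphism onto $\R$.
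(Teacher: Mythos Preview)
Your proof is correct and takes a genuinely different route from the paper's. The paper splits into cases on whether $\pi$ is surjective: if so, it invokes a result of Hewitt (\cite{Hew63}) that any strictly finer group topology on $\R_+$ is discrete; if not, it argues that $\R_+\setminus\pi(A)$ is dense, so $\pi(A)$ is totally separated, whence $A$ is totally disconnected, and then van Dantzig's theorem produces a compact open subgroup that $\pi$ must kill, forcing $A$ to be discrete. Your argument instead uses a single structural input---the structure theorem for compactly generated locally compact abelian groups---and then does elementary bookkeeping to eliminate the compact and free-abelian factors. This trades two somewhat disparate external results for one larger one, but yields a more uniform argument with no case split on surjectivity.

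One expository remark: your final openness argument can be shortened. Once you know $\pi(A')=\R$ and $\pi$ is injective on all of $A$, it follows immediately that $A'=A$ (any $b\in A$ has $\pi(b)\in\R=\pi(A')$, so $b\in A'$ by injectivity), and then $\pi=\pi|_{A'}$ is already the topological isomorphism $x\mapsto cx$. Your computation ``$W=-\pi(a)+\pi(U)$'' is in fact equivalent to $a^{-1}U\subseteq A'$, which is exactly the observation $A=A'$; making this explicit avoids the slightly roundabout neighborhood argument.
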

\begin{proof}
Assume $\pi$ is not an isomorphism. If $\pi$ happens to be surjective, then we can identify $A$ with $\R_+$ as a group but equipped with a strictly finer topology. By \cite[Lemma 2.1]{Hew63}, it follows that $A$ must be discrete. Otherwise, $\pi$ is not surjective and we claim that $\R_+\setminus \pi(A)$ is dense in $\R_+$. When $\pi(A)$ itself is not dense (and therefore of the form $\lambda^\Z$ for some $0<\lambda \leq 1 $) this is immediate, and otherwise for any $t\in \R_+\setminus \pi(A)$ we have that $t\pi(A) \subset \R_+\setminus \pi(A)$ is dense. It follows that $\pi(A)$ is totally separated and hence $A$ is totally disconnected. By van Dantzig's Theorem \cite{vD36} (see also \cite[Theorem 3.1]{Wil04}), we can find a compact open subgroup $K\leq A$. Then $\pi(K)$ must be the only compact subgroup of $\R_+$, namely $\{1\}$, and the injectivity of $\pi$ implies $K=\{e\}$. In particular, $\{e\}$ is open in $A$ and thus $A$ is discrete.    
\end{proof}

Recall that the \emph{modular function} $\Delta_G\colon G\to \R_+$ is the continuous homomorphism (where $\R_+:=(0,\infty)$ has its multiplicative group structure) determined by $\mu_G(E\cdot s)= \Delta_G(s) \mu_G(E)$ for $s\in G$ and $E\in \B(G)$, where $\mu_G$ is any left Haar measure on $G$. This yields the following change of variables formulas that will be used implicitly in the sequel: 
    \[ 
        \int_Gf(st) d\mu_G(s) = \Delta_G(t)^{-1} \int_G f(s) d\mu_G(s) \qquad \text{ and } \qquad\int_G f(s^{-1}) d\mu_G(s) =  \Delta_G(s)^{-1}\int_G f(s) d\mu_G(s)
    \]
for $t\in G$ and $f\in L^1(G)$. We say $G$ is \emph{unimodular} if $\Delta_G\equiv 1$.

Let $\lambda_G,\rho_G\colon G\to B(L^2(G))$ be the \emph{left and right regular representations} of $G$:
    \[
        [\lambda_G(s)f](t)=f(s^{-1}t) \qquad \qquad [\rho_G(s)f](t)=\Delta_G(s)^{1/2}f(ts)
    \]
for $s,t\in G$ and $f\in L^2(G)$. The \emph{group von Neumann algebra} of $G$ is the von Neumann algebra generated by its left regular representation, $L(G):=\lambda_G(G)''$. We denote the von Neumann algebra generated by its right regular representation by $R(G):=\rho_G(G)''$, which we note satisfies $R(G)=L(G)'\cap B(L^2(G))$. For any $f\in L^1(G)$,
    \begin{align}\label{eqn:L1_function_operator}
        \lambda_G(f):=\int_G \lambda_G(t) f(t)\ d\mu_G(t)
    \end{align}
defines an element of $L(G)$. The mapping $f\mapsto \lambda_G(f)$ gives a $*$-homomorphism where $L^1(G)$ is equipped with the convolution
    \[
        [f*g](s) = \int_G f(t)g(t^{-1}s)\ d\mu_G(t)
    \]
and involution
    \[
        [f^\sharp](s) = \Delta_G(s)^{-1}\overline{f(s^{-1})},
    \]
and the $*$-subalgebra $\lambda_G(L^1(G))$ is dense in $L(G)$ in the strong (and weak) operator topology.

We say $f\in L^2(G)$ is a \emph{left convolver} if $f*g\in L^2(G)$ for all $g\in L^2(G)$ and there exists a constant $c>0$ such that $\|f*g\|_{L^2(\mu_G)}\leq c\|g\|_{L^2(\mu_G)}$. In this case we denote the bounded operator $g\mapsto f*g$ by $\lambda_G(f)$. Note that every $f\in L^1(G)\cap L^2(G)$ is a left convolver and that this operator agrees with the one defined in (\ref{eqn:L1_function_operator}). This additionally implies that $L(G)=\{\lambda_G(f)\colon f \text{ is a left convolver}\}''$. The \emph{Plancherel weight} associated to $\mu_G$ is then defined on $L(G)_+$ by
    \[
        \varphi_G(x^*x) := \begin{cases}
            \|f\|_{L^2(\mu_G)}^2 & \text{if }x=\lambda_G(f)\text{, with $f$ a left convolver} \\
            +\infty & \text{otherwise}
        \end{cases}.
    \]
This weight is always faithful normal and semifinite (see \cite[Section VII.3]{Tak03}).

\begin{rem}\label{rem:Plancherel_weight_is_semi-invariant_weight_Delta_G}
We claim that
    \[
        \varphi_G( \lambda_G(s) x \lambda_G(s)^*)= \Delta_G(s) \varphi_G( x) \qquad s\in G,\ x\in L(G)_+.
    \]
By definition of the Plancherel weight $\varphi_G$, it suffices to verify this for $x=\lambda_G(f)^* \lambda_G(f)$ with $f$ a left convolver. For such an $f$ and any $s\in G$, a direct computation shows that $\lambda_G(f)\lambda_G(s)^*=\lambda_G(f_s)$ where $f_s = \Delta_G(s)^{\frac12} \rho_G(s) f$ is also a left convolver. Thus
    \begin{align*}
        \varphi_G( \lambda_G(s) \lambda_G(f)^* \lambda_G(f) \lambda_G(s)^*) &= \varphi_G( \lambda_G(f_s)^* \lambda_G(f_s)^*)\\
        &= \|f_s\|_{L^2(\mu_G)}^2 \\
        &= \Delta_G(s) \| f\|_{L^2(\mu_G)}^2 = \Delta_G(s) \varphi_G(\lambda_G(f)^* \lambda_G(f)),
    \end{align*}
where we have used the definition of the Plancherel weight as well as the fact that $\rho_G(s)$ is unitary.$\hfill\blacksquare$
\end{rem}

We will assume the reader has some familiarity with modular theory for weights on von Neumann algebras and will only establish notation here. Complete details can be found in \cite[Chapter VIII]{Tak03} (see also \cite[Section 1]{GGLN25} for a quick introduction to these concepts). Given a faithful normal semifinite weight $\varphi$ on a von Neumann algebra $M$, we denote
    \begin{align*}
        \sqrt{\dom}(\varphi)&:=\{x\in M\colon \varphi(x^*x)<+\infty\}\\
        \dom(\varphi)&:=\text{span}\{x^*y\colon x,y\in \sqrt{\dom}(\varphi)\}.
    \end{align*}
We write $L^2(M,\varphi)$ for the completion of $\sqrt{\dom}(\varphi)$ with respect to the norm induced by the inner product
    \[
        \<x,y\>_\varphi:= \varphi(y^*x) \qquad x,y\in \sqrt{\dom}(\varphi).
    \]
In the case of a Plancherel weight on $L(G)$, this Hilbert space is nothing more than $L^2(G)$ since the map $\lambda_G(f)\mapsto f$ onto left convolvers extends to a unitary. The \emph{modular conjugation} and \emph{modular operator} for $\varphi$ will be denoted by $J_\varphi$ and $\Delta_\varphi$, respectively. The \emph{modular automorphism group} of $\varphi$, which we view as an action $\sigma^\varphi\colon \R\curvearrowright M$, is then defined by
    \[
        \sigma_t^\varphi(x):= \Delta_\varphi^{it} x \Delta_\varphi^{-it}.
    \]
Then the \emph{centralizer} of $\varphi$ is the fixed point subalgebra under this action and is denoted
    \[
        M^\varphi :=\{x\in M\colon \sigma_t^\varphi(x) = x \ \forall t\in \R\}.
    \]
We also recall that $x\in M^\varphi$ if and only if $xy,yx\in \dom(\varphi)$ with $\varphi(xy) = \varphi(yx)$ for all $y\in \dom(\varphi)$ (see \cite[Theorem VIII.2.6]{Tak03}). That is, $M^\varphi$ is the largest von Neumann algebra of $M$ on which $\varphi$ is tracial.
    
Lastly, we recall the two von Neumann algebraic notions that characterize almost unimodular groups. We say $\varphi$ is \emph{strictly semifinite} if its restriction to $M^\varphi$ is semifinite; that is, if $\dom(\varphi)\cap M^\varphi$ is dense in $M^\varphi$ in the strong (or weak) operator topology. This property has several equivalent characterizations (see \cite[Lemma 1.2]{GGLN25}), but the most relevant to this article is the existence of a faithful normal conditional expectation $\E_\varphi\colon M\to M^\varphi$. After \cite{Con72}, we say a faithful normal semifinite weight $\varphi$ is \emph{almost periodic} if its modular operator $\Delta_\varphi$ is diagonalizable. We will write $\Sd(\varphi)$ for the point spectrum of $\Delta_\varphi$, so that
    \[
        \Delta_\varphi = \sum_{\delta\in \Sd(\varphi)} \delta 1_{\{\delta\}}(\Delta_\varphi)
    \]
whenever $\varphi$ is almost periodic. Note that an almost periodic weight is automatically strictly semifinite by \cite[Proposition 1.1]{Con74}.

\section{Almost Unimodular Groups}\label{sec:almost_unimodular_groups}

\begin{thm}[{Theorem~\ref{introthm:A}}]\label{thm:when_is_plancherel_weight_almost_periodic}
Let $G$ be a locally compact group equipped with a left Haar measure $\mu_G$, and let $\vphi_G$ be the associated Plancherel weight on $L(G)$. Denote the modular function by $\Delta_G\colon G\to \R_+$. The following are equivalent:
    \begin{enumerate}[label=(\roman*)]
        \item $\ker{\Delta_G}$ is open; \label{part:open_unimodular_part}
        
        \item $\varphi_G$ is strictly semifinite; \label{part:strictly_semifinite}
        
        \item $\varphi_G$ is almost periodic;\label{part:almost_periodic}
        
        \item $\Delta_G$ viewed as an operator affiliated with $L^\infty(G) \subset B(L^2(G))$ has non-empty point spectrum. \label{part:mod_functiona_diagonlizable}
    \end{enumerate}
In this case one has
    \[
        \Sd(\varphi_G)=\Delta_G(G) \qquad \text{ and } \qquad L(G)^{\vphi_G} = \{\lambda_G(s)\colon s\in \ker{\Delta_G}\}''\cong L(\ker{\Delta_G}).
    \]
Under the identification $L(G)^{\varphi_G} \cong L(\ker{\Delta_G})$, $\varphi_G|_{L(G)^{\varphi_G}}$ is the Plancherel weight on $L(\ker{\Delta_G})$ corresponding to the left Haar measure $\mu_G|_{\mathcal{B}(\ker{\Delta_G})}$.
\end{thm}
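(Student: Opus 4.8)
The plan is to prove the chain of equivalences by establishing \ref{part:open_unimodular_part}$\Rightarrow$\ref{part:almost_periodic}$\Rightarrow$\ref{part:strictly_semifinite}$\Rightarrow$\ref{part:mod_functiona_diagonlizable}$\Rightarrow$\ref{part:open_unimodular_part}, since the implication \ref{part:almost_periodic}$\Rightarrow$\ref{part:strictly_semifinite} is already recorded in the preliminaries (via \cite[Proposition 1.1]{Con74}). The central object throughout is the unitary identification $L^2(L(G),\varphi_G)\cong L^2(G)$ carrying $\Delta_{\varphi_G}$ to the modular function $\Delta_G$ acting by pointwise multiplication, as described in the introduction; this is the bridge that lets me translate between von Neumann algebraic and measure-theoretic statements.

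First I would do \ref{part:open_unimodular_part}$\Rightarrow$\ref{part:almost_periodic}. Assuming $\ker\Delta_G$ is open, Lemma~\ref{lem:open_via_Haar_measure} tells me $\mu_G$ restricts to a left Haar measure on $N:=\ker\Delta_G$, and more generally each coset of $N$ carries positive finite measure on compacta; the point is that $\Delta_G(G)\subset\R_+$ is then a countable (discrete) subgroup, so as a multiplication operator on $L^2(G)$ the function $\Delta_G$ takes only countably many values, each on a set of positive measure, hence $\Delta_G=\sum_{\delta\in\Delta_G(G)}\delta\,1_{\Delta_G^{-1}(\delta)}$ is diagonalizable. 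Transporting back through the unitary, $\Delta_{\varphi_G}$ is diagonalizable, i.e. $\varphi_G$ is almost periodic, and this computation simultaneously shows $\Sd(\varphi_G)=\Delta_G(G)$.

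Next, \ref{part:strictly_semifinite}$\Rightarrow$\ref{part:mod_functiona_diagonlizable}: strict semifiniteness gives a faithful normal conditional expectation $\E_{\varphi_G}\colon L(G)\to L(G)^{\varphi_G}$, and in particular $L(G)^{\varphi_G}$ is nonzero, so there is a nonzero $x\in\sqrt{\dom}(\varphi_G)\cap L(G)^{\varphi_G}$. Writing $x=\lambda_G(f)$ for a left convolver $f\in L^2(G)$, the condition $\sigma_t^{\varphi_G}(x)=x$ translates under the unitary into $\Delta_G^{it}f=f$ for all $t$, i.e. $f$ is supported on $\{\Delta_G=1\}=\ker\Delta_G$; since $f\ne 0$ this set has positive measure, and then $\Delta_G$ (pointwise multiplication) has $1$ as an eigenvalue, so its point spectrum is nonempty. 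For \ref{part:mod_functiona_diagonlizable}$\Rightarrow$\ref{part:open_unimodular_part}: if $\delta$ is an eigenvalue of the multiplication operator $\Delta_G$ then $\Delta_G^{-1}(\delta)$ has positive, and (being contained in a $\sigma$-finite translate, or by restricting to a compact piece) finite, measure; but $\Delta_G^{-1}(\delta)$ is a coset $sN$ of $N=\ker\Delta_G$, so $N$ contains a Borel set of positive finite measure, and Lemma~\ref{lem:open_via_Haar_measure}\,(iii)$\Rightarrow$(i) gives that $N$ is open. The main obstacle is the careful measure-theoretic bookkeeping in these last two implications — ensuring that ``positive measure'' upgrades to ``positive \emph{finite} measure'' (handled by inner regularity on $\sigma$-finite sets, as recalled in the preliminaries) and that the eigenvalue analysis of the multiplication operator $\Delta_G$ on the non-$\sigma$-finite space $L^2(G)$ is valid.

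Finally I would nail down the ``in this case'' assertions. The identity $\Sd(\varphi_G)=\Delta_G(G)$ falls out of the computation in \ref{part:open_unimodular_part}$\Rightarrow$\ref{part:almost_periodic}. For the centralizer, the argument in \ref{part:strictly_semifinite}$\Rightarrow$\ref{part:mod_functiona_diagonlizable} shows that $x=\lambda_G(f)\in L(G)^{\varphi_G}\cap\sqrt{\dom}(\varphi_G)$ forces $\supp f\subset\ker\Delta_G$; since such $x$ are dense in $L(G)^{\varphi_G}$ (by strict semifiniteness) and $\{\lambda_G(f): f\text{ a left convolver on }G,\ \supp f\subset N\}$ coincides with the left convolvers of $N$ (using that $\mu_G|_{\B(N)}$ is a left Haar measure on $N$, so that convolution and the $L^2$-norm are computed the same way whether one works in $G$ or in $N$), one gets $L(G)^{\varphi_G}=\{\lambda_G(s): s\in N\}''\cong L(N)$ with the isomorphism $\lambda_G(s)\mapsto\lambda_N(s)$. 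Under this identification the Plancherel norm $\|f\|_{L^2(\mu_G)}=\|f\|_{L^2(\mu_G|_{\B(N)})}$ is exactly the Plancherel norm for $(L(N),\mu_G|_{\B(N)})$, so $\varphi_G|_{L(G)^{\varphi_G}}$ is the Plancherel weight of $N$ for the restricted Haar measure, as claimed. The one delicate point here is checking that a left convolver on $G$ supported in $N$ is a left convolver on $N$ and vice versa; this uses openness of $N$ so that $L^2(N)$ sits inside $L^2(G)$ isometrically as the functions supported on $N$, and that convolution by such a function only sees the $N$-part of its argument.
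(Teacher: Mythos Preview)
Your overall strategy is sound, and the cycle (i)$\Rightarrow$(iii)$\Rightarrow$(ii)$\Rightarrow$(iv)$\Rightarrow$(i) works. However, there is one genuine error and one point where your argument diverges from (and is a bit looser than) the paper's.

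\textbf{The error.} In (i)$\Rightarrow$(iii) you assert that $\Delta_G(G)$ is a \emph{countable} discrete subgroup of $\R_+$. Discreteness is correct (it is the quotient $G/\ker\Delta_G$ by an open subgroup), but countability fails in general: see Example~\ref{exs:first_examples}(\ref{ex:ax+b_group}), where $\R_+$ with the discrete topology acts on $\R$ and $\Delta_G(G)=\R_+$ is uncountable. Fortunately your argument does not actually need countability: the decomposition $G=\bigsqcup_{\delta\in\Delta_G(G)}\Delta_G^{-1}(\{\delta\})$ into disjoint open sets gives an orthogonal (possibly uncountable) direct sum $L^2(G)=\bigoplus_\delta L^2(\Delta_G^{-1}(\{\delta\}))$, and each summand is the $\delta$-eigenspace of the multiplication operator $\Delta_G$. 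So $\Delta_{\varphi_G}$ is diagonalizable and $\Sd(\varphi_G)=\Delta_G(G)$, as you claim. Just delete the word ``countable.''

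\textbf{Comparison with the paper.} The paper runs (i)$\Rightarrow$(ii)$\Rightarrow$(iii)$\Rightarrow$(iv)$\Rightarrow$(i). Its (i)$\Rightarrow$(ii) exhibits $\lambda_G(C_c(\ker\Delta_G)*C_c(\ker\Delta_G))$ as an approximate unit inside $\dom(\varphi_G)\cap L(G)^{\varphi_G}$, and its (ii)$\Rightarrow$(iii) is either a citation or a basic-construction argument showing $\sum_{sG_1}\lambda_G(s)e_{\varphi_G}\lambda_G(s)^*=1$. Your route (i)$\Rightarrow$(iii) via direct diagonalization of $\Delta_G$ is cleaner and avoids the basic construction entirely. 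Your (ii)$\Rightarrow$(iv), producing an eigenvector from a nonzero $x\in\sqrt{\dom}(\varphi_G)\cap L(G)^{\varphi_G}$, is correct and essentially dual to the paper's (iii)$\Rightarrow$(iv). For the centralizer identification, the paper argues the inclusion $L(G)^{\varphi_G}\subset\{\lambda_G(s):s\in\ker\Delta_G\}''$ by applying the normal conditional expectation $\E_{\varphi_G}$ to $\lambda_G(f)$, $f\in C_c(G)$, and reading off $\E_{\varphi_G}(\lambda_G(f))=\lambda_G(1_{\ker\Delta_G}f)$; this sidesteps the ``left convolver on $G$ supported in $N$ versus left convolver on $N$'' issue you flag. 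Your density argument is correct in outline, but to close it you still need that a left convolver $f$ on $G$ with $\supp f\subset\ker\Delta_G$ gives $\lambda_G(f)\in\{\lambda_G(s):s\in\ker\Delta_G\}''$, which is exactly the point the paper's conditional-expectation computation handles more directly.
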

\begin{proof}
Throughout we denote $G_1:= \ker{\Delta_G}$.\\

\noindent \textbf{\ref{part:open_unimodular_part}$\Rightarrow$\ref{part:strictly_semifinite}:} Using that $G_1$ is open, we can identify $C_c(G_1)$ as a subalgebra of $C_c(G)$. Under this identification, $f\in C_c(G_1)$ satisfies
    \[
        \sigma_t^{\varphi_G}( \lambda_G(f)) = \int_G \sigma_t^{\varphi_G}(\lambda_G(s)) f(s)\ d\mu_G(s) = \int_G \Delta_G(s)^{it} \lambda_G(s) f(s)\ d\mu_G(s) = \int_G \lambda_G(s) f(s)\ d\mu_G(s) = \lambda_G(f),
    \]
for all $t\in \R$, where we have used that $\supp(f)\subset G_1$. Thus $\lambda_G(f)\in L(G)^{\varphi_G}$. Additionally, the definition of the Plancherel weight gives
    \[
        \lambda_G( C_c(G_1)* C_c(G_1)) \subset \lambda_G( C_c(G)* C_c(G))\cap L(G)^{\varphi_G} \subset \dom(\varphi_G|_{L(G)^{\varphi_G}}).
    \]
Since $G_1$ is an open neighborhood of $e$ in $G$, $\lambda_G(C_c(G_1)*C_c(G_1))$ contains an approximate unit, and therefore $\varphi_G|_{L(G)^{\varphi_G}}$ is semifinite.\\

\noindent\textbf{\ref{part:strictly_semifinite}$\Rightarrow$\ref{part:almost_periodic}:} This follows from \cite[Lemma 1.4]{GGLN25} after noting that $\sigma_t^{\varphi_G}(\lambda_G(s)) = \Delta_G(s)^{it} \lambda_G(s)$ for all $s\in G$, but we also provide the following direct proof. Let $e_{\vphi_G}\in B(L^2(L(G),\vphi_G))$ be the projection onto the subspace $L^2(L(G)^{\vphi_G},\vphi_G)$. Since $\lambda_G(s)\in L(G)^{\vphi_G}$ if and only if $s\in G_1$, one has that $\{\lambda_G(s)e_{\vphi_G} \lambda_G(s)^*\colon sG_1\in G/G_1\}$ is a pairwise orthogonal family of projections in $\<L(G),e_{\vphi_G}\>$ (the basic construction for the inclusion $L(G)^{\vphi_G}\subset L(G)$). Moreover, $\lambda_G(s)e_{\vphi_G} \lambda_G(s)^*=\lambda_G(t)e_{\vphi_G} \lambda_G(t)^*$ whenever $sG_1=tG_1$. This implies the projection 
    \[
        \sum_{sG_1\in G/G_1} \lambda_G(s)e_{\vphi_G} \lambda_G(s)^*
    \]
is a central projection in $\<L(G),e_{\vphi_G}\>$ that dominates $e_{\vphi_G}$. The strict semifiniteness assumption implies $e_{\vphi_G}$ has full central support (see \cite[Proposition 2.3.(b)]{GGLN25}), and therefore the above projection must be $1$. Since $\lambda_G(s)e_{\vphi_G} \lambda_G(s)^*$ is the projection onto to $\Delta_G(s)$-eigenspace of $\Delta_{\vphi_G}$, it follows that $\Delta_{\vphi_G}$ is diagonalizable with
    \begin{align}
        \Delta_{\vphi_G} = \sum_{sG_1\in G/G_1} \Delta_G(s) \lambda_G(s)e_{\vphi_G} \lambda_G(s)^*. \label{eqn:diagonalization_mod_op}
    \end{align}
That is, $\vphi_G$ is almost periodic.\\

\noindent \textbf{\ref{part:almost_periodic}$\Rightarrow$\ref{part:mod_functiona_diagonlizable}:} This follows from the fact that $C_c(G)\ni f\mapsto \lambda_G(f)$ extends to a unitary $L^2(G)\to L^2(L(G),\varphi_G)$ which carries $\Delta_G$ to $\Delta_{\varphi_G}$.\\

\noindent \textbf{\ref{part:mod_functiona_diagonlizable}$\Rightarrow$\ref{part:open_unimodular_part}:} The continuity of $\Delta_G$ as a function implies that its point spectrum as an operator affiliated with $L^\infty(G)$ is the set $\{\delta\in \Delta_G(G) \colon \mu_G(\Delta_G^{-1}(\{\delta\}))>0\}$. Thus if we let $f\in L^2(G)$ be an eigenvector of $\Delta_G$, then its eigenvalue $\delta$ is necessarily in $\Delta_G(G)$. Additionally, for $s\in \Delta_G^{-1}(\{1/\delta\})$ we have
    \[
        \Delta_G \lambda_G(s) f = \frac{1}{\delta} \lambda_G(s) \Delta_G f = \lambda_G(s) f.
    \]
Thus $1$ is in the point spectrum of $\Delta_G$, and by definition of $\Delta_G$ as a pointwise multiplication operator we must have
    \[
        \mu_G\left(\{t\in G\setminus G_1\colon |[\lambda_G(s) f](t)|>0\}\right) =0.
    \]
Consequently, $1_{G_1} \lambda_G(s) f$ is a non-trivial square integrable function supported in $G_1$. It must therefore be the case that there exists a Borel set $E\subset G_1$ with $0<\mu_G(E) < \infty$, and so $G_1$ is open by Lemma~\ref{lem:open_via_Haar_measure}.\\

\noindent We now verify the final claim. The equality $\Sd(\varphi_G)=\Delta_G(G)$ follows from (\ref{eqn:diagonalization_mod_op}), and Lemma~\ref{lem:open_via_Haar_measure} gives that $\mu_G|_{\B(G_1)}$ is a left Haar measure on $G_1$. Next, from
    \[
        \sigma_t^{\varphi_G}(\lambda_G(s)) = \Delta_G(s)^{it} \lambda_G(s) \qquad \qquad t\in \R,\ s\in G.
    \]
we immediately have the inclusion $\{\lambda_G(s)\colon s\in G_1\}''\subset L(G)^{\vphi_G}$. Conversely, let $\mathcal{E}_{\vphi_G}\colon L(G)\to L(G)^{\vphi_G}$ be the unique $\vphi_G$-preserving conditional expectation, which exists by the strict semifiniteness of $\varphi_G$. The above formula implies $\mathcal{E}_{\vphi_G}(\lambda_G(s)) = 1_{G_1}(s) \lambda_G(s)$, and using the normality of $\mathcal{E}_{\vphi_G}$ we have for $f\in C_c(G)$ that
    \[
        \mathcal{E}_{\vphi_G}(\lambda_G(f)) = \int_G \mathcal{E}_{\vphi_G}(\lambda_G(s)) f(s) d\mu_G(s) = \int_{G} \lambda_G(s) 1_{G_1}(s) f(s)\ d\mu(s)  \in \{\lambda_G(s)\colon s\in G_1\}''.
    \]
Since we can approximate elements of $L(G)^{\vphi_G}$ by integrals of the above form, it follows that $\{\lambda_G(s)\colon s\in G_1\}''=L(G)^{\varphi_G}$. Finally, the Plancherel weight $\varphi_1$ associated to $\mu_G|_{\B(G_1)}$ is determined by the full left Hilbert algebra generated by $C_c(G_1)\subset L^2(G_1, \mu_G)$ (see \cite[Section VII.3]{Tak03}). Since we can identify these spaces as subspaces of $C_c(G)$ and $L^2(G)$, respectively, it follows that $\varphi_1$ is the restriction of $\varphi_G$ to $\{\lambda_G(s)\colon s\in G_1\}''=L(G)^{\varphi_G}$.
\end{proof}

In light of the previous theorem and the discussion in the introduction, we make the following definition.

\begin{defi}\label{defi:almost_unimodular_group}
Let $G$ be a locally compact group with modular function $\Delta_G\colon G\to \R_+$. We say $G$ is \textbf{almost unimodular} if $\ker{\Delta_G}$ is open in $G$. $\hfill\blacksquare$    
\end{defi}

It follows from Lemma~\ref{lem:dichotomy} that $G$ is almost unimodular if and only if the map from (\ref{eqn:quotient_map}) fails to be a homeomorphism onto $\R_+$. Thus any locally compact group $G$ with $\Delta_G(G)\subsetneq \R_+$ is almost unimodular. This includes all unimodular groups, and, in fact, we will see below that the class of almost unimodular groups is the smallest class of locally compact groups that contains all unimodular groups and is closed under extensions by discrete groups (see Corollary~\ref{cor:class_of_almost_unimod_groups}). Before we present more concrete examples, we deduce some alternate characterizations of almost unimodularity.

\begin{prop}\label{prop:plancherel_weight_almost_periodic_sigma_compact_and_second_countable_case}
Let $G$ be a locally compact group equipped with a left Haar measure $\mu_G$, and let $\Delta_G\colon G\to \R_+$ be the modular function.
    \begin{enumerate}[label=(\alph*)]
        \item $G$ is almost unimodular if and only if $\ker{\Delta_G}$ contains the connected component of $e\in G$. \label{part:connected_component}
        
        \item If $G$ is $\sigma$-compact, then $G$ is almost unimodular if and only if $\mu_G(\ker{\Delta_G})>0$. \label{part:sigma_compact}

        \item If $G$ is second countable, then $G$ is almost unimodular if and only if $\Delta_G(G)$ is countable. \label{part:second_countable}
    \end{enumerate}
\end{prop}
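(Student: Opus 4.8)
The plan is to prove each of the three equivalences by leveraging Theorem~\ref{thm:when_is_plancherel_weight_almost_periodic} together with the structural facts about $\ker\Delta_G$ and Lemma~\ref{lem:open_via_Haar_measure}. For each part, the direction ``$G$ almost unimodular $\Rightarrow$ stated condition'' should be essentially immediate once one recalls that $\ker\Delta_G$ is then open (hence clopen, since subgroups are closed, and hence of positive measure and at most countable index in the relevant cases). The substance is in the reverse implications, which amount to deducing openness of $\ker\Delta_G$ from a weaker hypothesis.

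For part~\ref{part:connected_component}: if $\ker\Delta_G$ is open, then since it contains $e$ and is open it contains the connected component $G^\circ$ of $e$ (as $G^\circ$ is the intersection of all clopen sets containing $e$, or more simply because the connected set $G^\circ$ meets the clopen set $\ker\Delta_G$). Conversely, if $G^\circ \subseteq \ker\Delta_G$, then the quotient map $G \to G/G^\circ$ factors the continuous injection $G/\ker\Delta_G \hookrightarrow \R_+$ through a totally disconnected group; I would apply Lemma~\ref{lem:dichotomy} to the induced map on $G/\ker\Delta_G$ (a quotient of the totally disconnected group $G/G^\circ$, hence itself totally disconnected), concluding that the map $G/\ker\Delta_G \to \R_+$ is not an isomorphism onto $\R_+$ (as $\R_+$ is connected and nontrivial) and therefore $G/\ker\Delta_G$ is discrete, i.e.\ $\ker\Delta_G$ is open.

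For part~\ref{part:sigma_compact}: one direction is Lemma~\ref{lem:open_via_Haar_measure} (openness $\Rightarrow$ $\mu_G(\ker\Delta_G)>0$, even $=\infty$ is fine, but positivity is what we need) combined with the observation that openness gives a finite-positive-measure Borel subset. For the converse, assume $\mu_G(\ker\Delta_G)>0$. Since $G$ is $\sigma$-compact, $\ker\Delta_G$ is $\sigma$-finite, so by the extended inner regularity of Haar measure on $\sigma$-finite sets (noted in the Preliminaries) there is a compact $K\subseteq \ker\Delta_G$ with $0<\mu_G(K)<\infty$; then Lemma~\ref{lem:open_via_Haar_measure}\,(iii)$\Rightarrow$(i) gives openness. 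I expect this to be the cleanest of the three.

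For part~\ref{part:second_countable}: if $G$ is almost unimodular then $\ker\Delta_G$ is open, so $G/\ker\Delta_G$ is discrete; a second countable group has at most countably many cosets of an open subgroup (cover $G$ by countably many basic open sets, each meeting only... more carefully: $G$ is Lindel\"of, and the cosets form an open cover with no proper subcover, hence countably many), so $\Delta_G(G) \cong G/\ker\Delta_G$ is countable. Conversely, suppose $\Delta_G(G)$ is countable. Then $G = \bigsqcup_{\delta \in \Delta_G(G)} \Delta_G^{-1}(\delta)$ is a countable partition into Borel sets; since $G$ is second countable it is $\sigma$-compact, so $\mu_G(G) = \sum_\delta \mu_G(\Delta_G^{-1}(\delta))$ in the sense that some countable subcollection of the $\Delta_G^{-1}(\delta)$ exhausts a $\sigma$-finite piece of positive measure, forcing at least one $\delta$ with $\mu_G(\Delta_G^{-1}(\delta))>0$. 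But $\Delta_G^{-1}(\delta)$ is a coset of $\ker\Delta_G$, and translation invariance/semi-invariance of $\mu_G$ then gives $\mu_G(\ker\Delta_G)>0$; now apply part~\ref{part:sigma_compact}. The one point requiring care here is justifying that positivity of the measure of \emph{some} fiber follows from countability of the fiber set together with nontriviality of $\mu_G$ — this is where $\sigma$-compactness (from second countability) is essential, since on a non-$\sigma$-compact group a countable union of null sets can have positive outer measure; I regard establishing this measure-theoretic step rigorously as the main (mild) obstacle, and it reduces to: pick a compact $K$ with $\mu_G(K)>0$, write $K \subseteq \bigcup_\delta \Delta_G^{-1}(\delta)$, and use countable subadditivity to find $\delta$ with $\mu_G(K \cap \Delta_G^{-1}(\delta))>0$.
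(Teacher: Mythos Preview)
Your proposal is correct and follows essentially the same route as the paper: parts~\ref{part:connected_component} and~\ref{part:sigma_compact} are argued identically (clopen subgroup contains $G^\circ$; Lemma~\ref{lem:dichotomy} on the totally disconnected quotient; $\sigma$-finiteness plus Lemma~\ref{lem:open_via_Haar_measure}), and the converse in~\ref{part:second_countable} is the same partition-then-reduce-to-\ref{part:sigma_compact} argument. The only difference is in the forward direction of~\ref{part:second_countable}: the paper invokes separability of $L^2(G)$ to bound the point spectrum of $\Delta_{\varphi_G}$ and then identifies this spectrum with $\Delta_G(G)$ via Theorem~\ref{thm:when_is_plancherel_weight_almost_periodic}, whereas you use the Lindel\"of property directly on the open coset cover. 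Your argument is more elementary and self-contained; the paper's has the virtue of tying back to the operator-algebraic characterization, but both are short and valid.
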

\begin{proof} \textbf{(a):} If $G$ is almost unimodular then $\ker{\Delta_G}$ contains the connected component of $e$ because it is an open and closed neighborhood of $e$. Conversely, if $\ker{\Delta_G}$ contains the connected component of $e$ then $G/\ker{\Delta_G}$ is totally disconnected. This quotient therefore cannot be isomorphic to $\R_+$ and hence Lemma~\ref{lem:dichotomy} implies $\ker{\Delta_G}$ is open.\\

\noindent\textbf{(b):} If $G$ is almost unimodular, then $\ker{\Delta_G}$ has positive Haar measure as an open set. Conversely, suppose $\mu_G(\ker{\Delta_G})>0$. The $\sigma$-compactness of $G$ implies $\mu_G$ is $\sigma$-finite, and consequently there exists a Borel subset $E\subset \ker{\Delta_G}$ satisfying $0<\mu_G(E)<\infty$. Thus $\ker{\Delta_G}$ is open by Lemma~\ref{lem:open_via_Haar_measure}.\\

\noindent\textbf{(c):} The second countability of $G$ implies $L^2(G) \cong L^2(L(G),\varphi_G)$ is separable as a Hilbert space, and consequently the point spectrum of $\Delta_{\varphi_G}$ is necessarily countable. If $G$ is almost unimodular, then the point spectrum of $\Delta_{\varphi_G}$ is $\Sd(\varphi_G)= \Delta_G(G)$ by Theorem~\ref{thm:when_is_plancherel_weight_almost_periodic}. Conversely, if $\Delta_G(G)$ is countable, then
    \[
        G = \bigsqcup_{\delta \in \Delta_G(G)} \Delta_G^{-1}(\{\delta\})
    \]
and
    \[
        \mu_G(\ker{\Delta_G}) = \mu_G(s\cdot \ker{\Delta_G}) = \mu_G\left(\Delta_G^{-1}(\{\Delta_G(s)\})\right)
    \]
imply $\mu_G(\ker{\Delta_G})>0$. Hence $G$ is almost unimodular by part \ref{part:sigma_compact}.
\end{proof}

\begin{exs}\label{exs:first_examples}
    \begin{enumerate}
    \item[]

    \item\label{ex:ax+b_group} Let $\R_+\curvearrowright \R$ be the action by multiplication. If we give $\R_+$ and $\R$ their usual topologies, then $G:=\R_+\ltimes \R$ is \emph{not} almost unimodular since
        \[
            \ker{\Delta_G} = \{1\}\times \R
        \]
    is not open. 
    If we instead give $\R_+$ the discrete topology, then the above kernel is open so that $G$ is almost unimodular by definition. This demonstrates that Proposition~\ref{prop:plancherel_weight_almost_periodic_sigma_compact_and_second_countable_case}.\ref{part:second_countable} fails without the assumption of second countability since $\Delta_G(G)=\R_+$.

    \item Let $\R_+\curvearrowright \R^2$ be the diagonal multiplication action: $a\cdot (s,t) = (as, at)$. Endow $\R_+$ with its usual topology and the first and second copies of the real numbers in $\R\times \R=\R^2$ with the usual and discrete topologies, respectively. As in the previous example, $G:= \R_+ \ltimes \R^2$ is \emph{not} almost unimodular because
        \[
            \ker{\Delta_G} = \{1\}\times \R^2
        \]
    is not open in $G$. However, it follows from outer regularity that $\mu_G(\ker{\Delta_G})=+\infty$, and so Proposition~\ref{prop:plancherel_weight_almost_periodic_sigma_compact_and_second_countable_case}.\ref{part:sigma_compact} fails without the assumption of $\sigma$-compactness.
    
    \item\label{ex:p-adic_ax+b_group} For a prime number $p\in \N$, let $\Q_p$ be the p-adic rationals and $\Q_p^\times = \Q_p\setminus \{0\}$, which are second countable locally compact groups under addition and multiplication, respectively. Both groups are abelian and hence unimodular. Let $\Q_p^\times \curvearrowright \Q_p$ be the multiplication action, which scales the left Haar measure of $\Q_p$ by powers of $p$. Then $G:=\Q_p^\times \ltimes \Q_p$ is second countable and satisfies
        \[
            \Delta_{G}(G) = p^\Z \qquad \text{ and }\qquad \ker{\Delta_G} = \Z_p^\times \ltimes \Q_p,
        \]
    (see \cite[Section 15.29]{HR79} or \cite[Subsection 1.2]{KT13}). Hence $G$ is almost unimodular either by definition or by Proposition~\ref{prop:plancherel_weight_almost_periodic_sigma_compact_and_second_countable_case}.\ref{part:second_countable}.

    \item\label{ex:totally_disconnected} A totally disconnected group $G$ is almost unimodular by Proposition~\ref{prop:plancherel_weight_almost_periodic_sigma_compact_and_second_countable_case}.\ref{part:connected_component}. Recall from van Dantzig's Theorem \cite{vD36} (see also \cite[Theorem 3.1]{Wil04}), $G$ admits a neighborhood basis for the identity consisting of compact open subgroups. For any such compact open subgroup $U\leq G$ and $t\in G$ one has
        \begin{align*}
            [U : U\cap tUt^{-1}] &= [ t^{-1} Ut : t^{-1} U t \cap U ] = \frac{\mu_G(t^{-1} Ut)}{\mu_G(t^{-1} U t \cap U)}\\
                &= \frac{\mu_G(U) \Delta_G(t)}{\mu_G(t^{-1} U t \cap U)} = [U : t^{-1} U t \cap U] \Delta_G(t).
        \end{align*}
    The above quantities are finite since $U$ is compact and $U\cap t U t^{-1}$ is open, and so it follows that $\Delta_G(G)\leq \Q_+$. In fact, $\Delta_G(t) = \frac{s(t)}{s(t^{-1})}$, where $s\colon G\to \N$ is the \emph{scale function} (see \cite[Proposition 4.1]{Wil04}). Note that $\ker{\Delta_G}$  is also totally disconnected and consists of the $t\in G$ such that $s(t)=s(t^{-1})$.

    \item\label{ex:graph_automorphisms} Let $\Gamma$ be a connected locally finite (simple) graph $\Gamma$. Then $\Aut(\Gamma)$ equipped with the compact-open topology (i.e. the topology of pointwise convergence on $\Gamma$) is a totally disconnected group (see \cite[Example 2.1.(c)]{Wil04}), and hence are almost unimodular by the previous example. Moreover, it follows from \cite[Lemma 1.(iii)]{Sch79} or \cite[Theorem 1]{Tro85} that $\Aut(\Gamma)$ is unimodular if and only if
        \[
            |\Aut(\Gamma)_w\cdot v| = |\Aut(\Gamma)_v\cdot w|,
        \]
    for all vertices $v,w$ in the same orbit under $G$, where $\Aut(\Gamma)_w\leq \Aut(\Gamma)$ is the stabilizer of $w$. For an example of a graph $\Gamma$ with non-unimodular $\Aut(\Gamma)$, see the discussion at the end of \cite{Tro85}.
    $\hfill\blacksquare$
    \end{enumerate}
\end{exs}

\begin{rem}
Theorem~\ref{thm:when_is_plancherel_weight_almost_periodic} implies that a necessary condition for $G$ to be almost unimodular is that the restriction of $\varphi_G$ to $\{\lambda_G(s)\colon s\in \ker{\Delta_G}\}''$ corresponds to a Plancherel weight $\varphi_1$ under the identification $\{\lambda_G(s)\colon s\in \ker{\Delta_G}\}''\cong L(\ker{\Delta_G})$. This is, in fact, also a sufficient condition. Indeed, for a non-zero $f\in C_c(\ker{\Delta_G})$ we have
    \[
        \int_G |f|^2\ d\mu_G =\varphi_G(\lambda_G(f)^* \lambda_G(f)) = \varphi_1(\lambda_{\ker{\Delta_G}}(f)^* \lambda_{\ker{\Delta_G}}(f)) = \int_{\ker{\Delta_G}} |f|^2\ d\mu_{\ker{\Delta_G}} \in (0,\infty),
    \]
for some left Haar measure $\mu_{\ker{\Delta_G}}$ on $\ker{\Delta_G}$. Consequently, $\supp(f)\subset \ker{\Delta_G}$ must satisfy $0 < \mu_G(\supp(f)) < \infty$, and therefore $\ker{\Delta_G}$ is open by Lemma~\ref{lem:open_via_Haar_measure}.$\hfill\blacksquare$
\end{rem}

\begin{rem}
As noted after Definition~\ref{defi:almost_unimodular_group}, a locally compact group $G$ is \emph{not} almost unimodular if and only if the map from (\ref{eqn:quotient_map}) is an isomorphism onto $\R_+$. It follows from Theorem~\ref{thm:when_is_plancherel_weight_almost_periodic} that these groups can be characterized by $\Delta_G$ having empty point spectrum or by $\sqrt{\dom}(\varphi_G)\cap L(\ker{\Delta_G}) = \{0\}$. Among second countable groups, these are precisely the $G$ with $\Delta_G(G)=\R_+$ by Proposition~\ref{prop:plancherel_weight_almost_periodic_sigma_compact_and_second_countable_case}.\ref{part:second_countable}.$\hfill\blacksquare$
\end{rem}

\section{Permanence Properties}\label{sec:permanence_properties}

Let $G$ be a locally compact group with closed subgroup $H\leq G$. Recall that a \emph{rho-function} for the pair $(G,H)$ is a continuous function $\rho\colon G\to (0,\infty)$ satisfying
    \[
        \rho(st) = \frac{\Delta_H(t)}{\Delta_G(t)} \rho(s) \qquad \qquad s\in G,\ t\in H.
    \]
Such functions always exist (see, for example, \cite[Proposition 2.56]{Fol16}).

\begin{prop}\label{prop:almost_unimodular_subgroups}
Let $G$ be an almost unimodular group with closed subgroup $H\leq G$, and let $\rho\colon G\to (0,\infty)$ be a rho-function for the pair $(G,H)$. Then $H$ is almost unimodular if and only if $\{t\in H\colon \rho(t)=\rho(e)\}$ is open in $H$. In particular, all closed normal subgroups of $G$ are almost unimodular.
\end{prop}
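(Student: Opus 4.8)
The plan is to analyze the modular function $\Delta_H$ of $H$ by relating it to $\Delta_G$ via the rho-function $\rho$. The defining relation $\rho(st) = \frac{\Delta_H(t)}{\Delta_G(t)}\rho(s)$ for $s \in G$, $t \in H$, specialized to $s = e$, gives $\Delta_H(t) = \Delta_G(t)\frac{\rho(t)}{\rho(e)}$ for all $t \in H$. Hence $\ker\Delta_H = \{t \in H : \Delta_G(t)\rho(t) = \rho(e)\}$. The subtlety is that this is not literally the set $S:=\{t \in H : \rho(t) = \rho(e)\}$ appearing in the statement, so the first step is to reconcile them: since $G$ is almost unimodular, $\ker\Delta_G$ is open in $G$, so $K := \ker\Delta_G \cap H$ is open in $H$ and $\Delta_G \equiv 1$ on $K$. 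On $K$ we have $\Delta_H(t) = \rho(t)/\rho(e)$, so $\ker\Delta_H \cap K = S \cap K$.

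Next I would argue both implications. Suppose $H$ is almost unimodular, so $\ker\Delta_H$ is open in $H$; then $\ker\Delta_H \cap K$ is open in $H$, i.e. $S \cap K$ is open. To conclude that $S$ itself is open, note that $S$ is a union of cosets of $K$ inside $H$: if $t \in S$ and $k \in K$ then $\Delta_G(tk) = \Delta_G(t)\Delta_G(k) = \Delta_G(t)$, and one checks using the cocycle identity for $\rho$ that $\rho(tk) = \rho(t)$ precisely when... — here I need to be a little careful, because the rho-function identity only controls right translation by $H$-elements multiplicatively through $\Delta_H/\Delta_G$, and $K \subset H$, so $\rho(tk) = \frac{\Delta_H(k)}{\Delta_G(k)}\rho(t) = \Delta_H(k)\rho(t)$. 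Thus $\rho(tk) = \rho(t)$ iff $\Delta_H(k) = 1$ iff $k \in \ker\Delta_H$. So $S$ is not a union of full $K$-cosets, but rather $S = \ker\Delta_H$ up to the twist by $\Delta_G$: precisely, for $t\in H$, $t \in S$ iff $\rho(t) = \rho(e)$ iff $\Delta_H(t) = \Delta_G(t)$. Since $\Delta_G|_H$ is continuous with open kernel $K$, and $\Delta_H$ is continuous, the set where they agree is the preimage of the diagonal; I would instead directly show $S$ is open by exhibiting, around each point of $S$, the open neighborhood on which both functions are locally constant and equal. Concretely, $\Delta_G$ is locally constant on $H$ (its kernel $K$ is open, so $\Delta_G|_H$ factors through the discrete quotient $H/K$... no — $K$ need not be normal in $H$; but $\Delta_G|_H$ is a homomorphism $H \to \R_+$ with open kernel, hence locally constant). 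So near any $t \in H$, $\Delta_G$ equals $\Delta_G(t)$ on the open set $tK$, and $S \cap tK = \{s \in tK : \Delta_H(s) = \Delta_G(t)\} = \Delta_H^{-1}(\{\Delta_G(t)\}) \cap tK$; this is open iff $\Delta_H^{-1}(\{\Delta_G(t)\})$ is locally open near $t$, which holds when $\ker\Delta_H$ is open (translate). Conversely, if $S$ is open, then in particular $S$ contains an open neighborhood of... we need $e \in S$, which holds since $\rho(e) = \rho(e)$; then for $t \in \ker\Delta_H$ we want an open neighborhood in $\ker\Delta_H$ — using $\Delta_G$ locally constant and $S$ open, $S \cap tK$ is open and on $tK$ we have $\ker\Delta_H \cap tK = \{s : \Delta_H(s) = 1\} \cap tK$, which relates to $S \cap tK$ by the fixed scaling $\Delta_H(s) = \Delta_G(s)\rho(s)/\rho(e) = \Delta_G(t)\rho(s)/\rho(e)$ on $tK$, so $\ker\Delta_H \cap tK = S' \cap tK$ where $S' = \{s : \rho(s) = \rho(e)/\Delta_G(t)\}$; and $S'$ is a translate/rescale of $S$ by an element realizing the needed $\rho$-value, so openness transfers. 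I would streamline all of this by first reducing to the open subgroup $K$: $H$ is almost unimodular iff $K = \ker\Delta_G \cap H$ (which is open in $H$) is almost unimodular, iff $\ker\Delta_K = \ker\Delta_H \cap K$ is open in $K$ iff open in $H$; and on $K$, $\Delta_H = \rho/\rho(e)$, so $\ker\Delta_H \cap K = S \cap K$; finally $S$ open in $H$ $\iff$ $S \cap K$ open in $H$, using that $S$ is a union of left cosets $tK$ with $t \in S$ each satisfying $S \cap tK = t(S \cap K)$ — this last coset-translation identity is exactly the cocycle relation $\rho(tk) = \Delta_H(k)\rho(t)$ combined with $\rho(t) = \rho(e)$, giving $\rho(tk) = \Delta_H(k)\rho(e) = \rho(k)$, so $tk \in S \iff k \in S$.

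For the final sentence, if $N \trianglelefteq G$ is closed normal, I would choose the rho-function explicitly: $\Delta_G|_N$ and $\Delta_N$ are both continuous homomorphisms $N \to \R_+$, and a standard fact (e.g. from the integration formula on $G/N$, cf. \cite[Proposition 2.56]{Fol16}) is that $\Delta_G(t) = \Delta_N(t) \Delta_{G/N}(tN)$ for $t \in N$, hence $\Delta_G|_N = \Delta_N$ since $tN$ is the identity coset. Thus $\rho \equiv 1$ is a valid rho-function for $(G,N)$, and $\{t \in N : \rho(t) = \rho(e)\} = N$ is trivially open in $N$; the first part then gives that $N$ is almost unimodular.

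The main obstacle is bookkeeping around the fact that $K = \ker\Delta_G \cap H$ need not be normal in $H$, so the "union of cosets" argument for transferring openness of $S$ between $S$ and $S \cap K$ must be justified by the cocycle identity rather than by passing to a quotient group; once the identity $S \cap tK = t(S \cap K)$ for $t \in S$ is nailed down, everything else is routine.
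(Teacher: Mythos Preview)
Your argument is correct and follows the same route as the paper, via the identity $\rho(t)/\rho(e)=\Delta_H(t)/\Delta_G(t)$ for $t\in H$ together with the openness of $K:=\ker\Delta_G\cap H$ in $H$. The paper shortens your coset-translation step by observing at the outset that $t\mapsto \rho(t)/\rho(e)$ is a continuous \emph{homomorphism} on $H$, so your set $S$ is in fact a closed subgroup; once that is noted, the equivalence ``$S$ open $\iff S\cap K$ open'' (and your identity $S\cap tK=t(S\cap K)$ for $t\in S$) is immediate, and the two containments $\ker\Delta_H\supset S\cap\ker\Delta_G$ and $S\supset \ker\Delta_H\cap\ker\Delta_G$ finish the proof without any further bookkeeping.
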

\begin{proof}
Observe that
    \[
        \frac{\rho(t)}{\rho(e)} = \frac{\Delta_H(t)}{\Delta_G(t)} \qquad \qquad t\in H, 
    \]
and consequently $H\ni t\mapsto \frac{\rho(t)}{\rho(e)}$ is a continuous homomorphism. It follows that $R:=\{t\in H\colon \rho(t)=\rho(e)\}$ is a non-empty closed subgroup of $H$, satisfying
    \[
        \ker{\Delta_H}\supset R\cap \ker{\Delta_G} \qquad \text{ and } \qquad R \supset \ker{\Delta_H} \cap \ker{\Delta_G}.
    \]
Thus if $H$ is almost unimodular, then $\ker{\Delta_H} \cap \ker{\Delta_G}$ is an open subgroup of $R$ and therefore $R$ is open. Similarly, when $R$ is assumed to be open it follows that $\ker{\Delta_H}$ is open.

For the last statement, the normality of $H$ implies that $G/H$ admits a left Haar measure. It then follows from \cite[Theorem 2.49]{Fol16} that $\Delta_G|_H = \Delta_H$, so that $R=H$ is open and therefore $H$ is almost unimodular by the above.
\end{proof}

Let $G$ be a locally compact group with closed normal subgroup $N\unlhd G$. Since $G/N$ is a locally compact group, we fix a left Haar measure $\mu_{G/N}$ on $G/N$ such that 
    \[
        \int_G f(s) d\mu_G(s) =  \int_{G/N} \int_N f(sn) d\mu_N(n) d\mu_{G/N}(sN) \qquad \qquad f \in L^1(G),
    \]  
where $\mu_G$ and $\mu_N$ are fixed left Haar measures on $G$ and $N$, respectively (see \cite[Theorem 2.49]{Fol16}). Since for each $s \in G$, $\mu_N \circ \Ad{s}$ is a left Haar measure on $N$ and $(s,n) \to sns^{-1}$ is a continuous homomorphism, we have that the Radon-Nikodym derivative $\frac{d \mu_N\circ \Ad{s}}{d \mu_N}$ is a constant and the map $s \to \frac{d \mu_N\circ \Ad{s}}{d \mu_N}$ is a continuous homomorphism. Thus one can compute the modular function of $G/N$ to obtain
\begin{align}\label{eqn:mod_function_of_quotient_group}
        \Delta_{G/N}(sN) = \Delta_G(s) \frac{d \mu_N\circ \Ad{s}}{d \mu_N} \qquad\qquad s\in G.
    \end{align}
This can be found in \cite[Lemma 3.4]{Tat72} and the discussion preceding it. The expression is different since we have adopted the convention of using left Haar measures, but the proof is the same.

\begin{prop}\label{prop:quotient_groups}
Let $G$ be a locally compact group with closed normal subgroup $N \unlhd G$ and define a continuous homomorphism $\phi\colon G\to (0,\infty)$ by 
    \[
        \phi(s):= \frac{d \mu_N\circ \Ad{s}}{d \mu_N} \qquad \qquad s\in G.
    \]
Then any two of the following statements below imply the third:
    \begin{enumerate}[label=(\roman*)]
        \item $G$ is almost unimodular;
        \item $G/N$ is almost unimodular;
        \item $\ker{\phi}$ is open.
    \end{enumerate}
In particular, if $N$ is compact or discrete then $G$ is almost unimodular if and only if $G/N$ is almost unimodular.
\end{prop}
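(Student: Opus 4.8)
The plan is to reduce everything to the single identity (\ref{eqn:mod_function_of_quotient_group}). Writing $q\colon G\to G/N$ for the (continuous, open, surjective) quotient map, that formula reads $\Delta_{G/N}\circ q = \Delta_G\cdot\phi$ as a pointwise identity of continuous homomorphisms $G\to\R_+$. From this multiplicative relation one checks directly that the three subgroups $\ker\Delta_G$, $\ker\phi$, and $q^{-1}(\ker\Delta_{G/N})$ of $G$ enjoy the property that the intersection of any two of them equals the intersection of all three: e.g. if $\Delta_G(s)=1$ and $\phi(s)=1$ then $\Delta_{G/N}(q(s))=\Delta_G(s)\phi(s)=1$, and similarly for the other two pairs using $\phi(s)=\Delta_{G/N}(q(s))\Delta_G(s)^{-1}$ and $\Delta_G(s)=\Delta_{G/N}(q(s))\phi(s)^{-1}$. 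Denote this common subgroup by $K:=\ker\Delta_G\cap\ker\phi\cap q^{-1}(\ker\Delta_{G/N})$.

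Next I would record the elementary topological facts needed. In a topological group a subgroup is open iff it has nonempty interior, equivalently iff it contains an open subgroup; the intersection of two open subgroups is open; preimages under $q$ of open subgroups of $G/N$ are open in $G$ by continuity; and images under $q$ of open subgroups of $G$ containing $N$ are open in $G/N$ by openness of $q$. Since $q^{-1}(\ker\Delta_{G/N})\supseteq N$, one has $q\bigl(q^{-1}(\ker\Delta_{G/N})\bigr)=\ker\Delta_{G/N}$, so $G/N$ is almost unimodular \emph{if and only if} the saturated subgroup $q^{-1}(\ker\Delta_{G/N})$ is open in $G$.

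With these in hand the three implications are symmetric. Assuming (i) and (iii): $\ker\Delta_G$ and $\ker\phi$ are open, hence so is $K$; since $K\subseteq q^{-1}(\ker\Delta_{G/N})$, the latter is open, which is (ii). Assuming (ii) and (iii): $q^{-1}(\ker\Delta_{G/N})$ and $\ker\phi$ are open, so $K$ is open, and since $K\subseteq\ker\Delta_G$ we get (i). Assuming (i) and (ii): $\ker\Delta_G$ and $q^{-1}(\ker\Delta_{G/N})$ are open, so $K$ is open, and since $K\subseteq\ker\phi$ we get (iii). For the final assertion, when $N$ is discrete its Haar measure is a multiple of counting measure, and when $N$ is compact its normalized Haar measure is the unique Haar probability measure; in either case $\mu_N\circ\mathrm{Ad}(s)=\mu_N$ for all $s\in G$, so $\phi\equiv 1$ and (iii) holds automatically, whence (i)$\Leftrightarrow$(ii) by the two implications just proved.

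This argument is essentially bookkeeping around (\ref{eqn:mod_function_of_quotient_group}) together with the stability of openness under finite intersections and under $q$ and $q^{-1}$. The only mild subtlety — and the one point I would take care over — is the translation between "$G/N$ is almost unimodular" and "$q^{-1}(\ker\Delta_{G/N})$ is open in $G$", which is exactly where the openness (not merely continuity) of the quotient map is used; everything else is formal.
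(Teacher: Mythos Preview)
Your proof is correct and is essentially the same as the paper's: both reduce to the identity $\Delta_{G/N}\circ q=\Delta_G\cdot\phi$ from (\ref{eqn:mod_function_of_quotient_group}), observe that the intersection of any two of the three kernels is contained in the third, and translate openness of $\ker\Delta_{G/N}$ in $G/N$ to openness of its $q$-preimage in $G$. Your write-up is slightly more explicit about the topological bookkeeping (in particular the role of the openness of $q$), but the argument is identical in structure and content.
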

\begin{proof}
Letting $q\colon G\to G/N$ be the quotient map, observe
    \[
        \ker{\Delta_{G/N}\circ q} = q^{-1}(\ker{\Delta_{G/N}}),
    \]
is open in $G$ if and only if $\ker{\Delta_{G/N}}$ is open in the quotient topology. Then using (\ref{eqn:mod_function_of_quotient_group}) we have
    \begin{align*}
        \ker{\Delta_G} &\supset \ker{\phi}\cap \ker{\Delta_{G/N}\circ q}\\
        \ker{\Delta_{G/N} \circ q} & \supset \ker{\phi}\cap \ker{\Delta_G}\\
        \ker{\phi} & \supset \ker{\Delta_G}\cap \ker{\Delta_{G/N}\circ q}.
    \end{align*}
Thus the openness of any two of $\ker{\Delta_G}$, $\ker{\Delta_{G/N}\circ q}$, or $\ker{\phi}$ implies openness of the third.

When $N$ is compact (resp. discrete) we can take $\mu_N$ to be the unique left Haar measure with $\mu_N(N)=1$ (resp. $\mu_N(\{e\})=1$). This uniqueness implies $\phi\equiv 1$, and hence $\ker{\phi}=G$ is open. Therefore the previous part implies that $G$ is almost unimodular if and only if $G/N$ is almost unimodular.
\end{proof}

When the quotient group $G/N$ is unimodular, (\ref{eqn:mod_function_of_quotient_group}) implies $\ker{\Delta_G} = \ker{\phi}$ and the previous proposition is a tautology. Nevertheless, there are sufficient conditions in this case to guarantee that $G$ is almost unimodular:

\begin{prop}\label{prop:short_exact_sequences}
Let
    \[
        1 \to N \to G \to H\to 1
    \]
be a short exact sequence of locally compact groups (that is, $N$ is a closed normal subgroup and $H\cong G/N$). If $N$ is almost unimodular and $H$ is discrete, then $G$ is almost unimodular.
\end{prop}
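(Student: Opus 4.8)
The plan is to exhibit an open subgroup of $G$ that is contained in $\ker{\Delta_G}$; since any subgroup of a topological group that contains an open subgroup is itself open (being a union of translates of that subgroup), this will immediately give that $\ker{\Delta_G}$ is open, i.e. that $G$ is almost unimodular.

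First I would observe that, because $H\cong G/N$ is discrete, the quotient map $q\colon G\to G/N$ is continuous and open, so $N=q^{-1}(\{eN\})$ is open in $G$; by Lemma~\ref{lem:open_via_Haar_measure}, $\mu_G$ restricts to a left Haar measure on $N$. Next I would identify the unimodular part of $N$ with $N\cap\ker{\Delta_G}$. Since $N$ is normal, $G/N$ admits a $G$-invariant Radon measure --- namely its left Haar measure, which is counting measure as $G/N$ is discrete --- and hence \cite[Theorem 2.49]{Fol16} gives $\Delta_G|_N=\Delta_N$. (This is exactly the fact used at the end of the proof of Proposition~\ref{prop:almost_unimodular_subgroups}.) It follows that $\ker{\Delta_N}=N\cap\ker{\Delta_G}$.

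Finally, since $N$ is almost unimodular, $\ker{\Delta_N}$ is open in $N$, and, $N$ being open in $G$, $\ker{\Delta_N}$ is then open in $G$. As $\ker{\Delta_N}$ is a subgroup of $G$ contained in $\ker{\Delta_G}$, the latter is a union of left translates of $\ker{\Delta_N}$ and is therefore open; that is, $G$ is almost unimodular.

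I do not anticipate a genuine obstacle here: the statement is essentially the combination of ``$N$ open in $G$'' with ``$N$ almost unimodular,'' and the only mildly delicate point is the identity $\Delta_G|_N=\Delta_N$, which follows from normality exactly as in Proposition~\ref{prop:almost_unimodular_subgroups}. If one prefers to avoid citing \cite[Theorem 2.49]{Fol16}, one can argue instead through Proposition~\ref{prop:quotient_groups}: discrete groups are unimodular, so (\ref{eqn:mod_function_of_quotient_group}) forces $\ker{\Delta_G}=\ker{\phi}$, while a one-line computation gives $\phi(n)=\Delta_N(n)^{-1}$ for $n\in N$; almost unimodularity of $N$ then makes $\ker{\phi}\cap N=\ker{\Delta_N}$ an open subgroup of $G$, whence $\ker{\phi}=\ker{\Delta_G}$ is open.
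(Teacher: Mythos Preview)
Your proof is correct and follows essentially the same route as the paper: observe that $N$ is open in $G$ by discreteness of $H$, note that $\ker{\Delta_N}\subset\ker{\Delta_G}$ via $\Delta_G|_N=\Delta_N$, and conclude that $\ker{\Delta_G}$ contains the open subgroup $\ker{\Delta_N}$ and is therefore open. The paper's proof is simply a terser version of what you wrote, leaving the identity $\Delta_G|_N=\Delta_N$ implicit.
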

\begin{proof}
The discreteness of $H$ implies that $N$ is open in $G$. Consequently, $\ker{\Delta_N}$ is open in $G$, and as a subgroup of $\ker{\Delta_G}$ it follows that $G$ is almost unimodular.    
\end{proof}

By definition, any almost unimodular group $G$ appears in a short exact sequence of the form in previous proposition:
    \begin{align}\label{eqn:almost_unimodular_definition_SES}
        1 \to \ker{\Delta_G} \to G \to \Delta_G(G) \to 1.
    \end{align}
Thus we immediately have the following corollary:

\begin{cor}\label{cor:class_of_almost_unimod_groups}
The class of almost unimodular groups is the smallest class of locally compact groups that is closed under extensions by discrete groups and that contains all unimodular groups.
\end{cor}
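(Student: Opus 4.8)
The plan is to verify the two defining conditions — that the class of almost unimodular groups (call it $\mathcal{C}$) is closed under extensions by discrete groups and contains all unimodular groups — and then to verify minimality, i.e. that any class $\mathcal{C}'$ with these two closure properties must contain all of $\mathcal{C}$. The first two properties are essentially immediate from what precedes: every unimodular group $G$ has $\ker{\Delta_G}=G$, which is trivially open, so $\mathcal{C}$ contains all unimodular groups; and Proposition~\ref{prop:short_exact_sequences} says precisely that if $N\in\mathcal{C}$ and $H$ is discrete, then any extension $1\to N\to G\to H\to 1$ lies in $\mathcal{C}$ — this is exactly closure under extensions by discrete groups. So only the minimality statement requires any argument.

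For minimality, I would let $\mathcal{C}'$ be any class of locally compact groups that contains all unimodular groups and is closed under extensions by discrete groups, and show $\mathcal{C}\subseteq\mathcal{C}'$. Given $G\in\mathcal{C}$ almost unimodular, by definition $\ker{\Delta_G}$ is open in $G$; in particular $\ker{\Delta_G}$ is a closed normal subgroup and, being contained in the kernel of the modular function, it is unimodular, hence $\ker{\Delta_G}\in\mathcal{C}'$ by the first property. Now consider the short exact sequence~(\ref{eqn:almost_unimodular_definition_SES}),
    \[
        1 \to \ker{\Delta_G} \to G \to \Delta_G(G) \to 1,
    \]
where $\Delta_G(G)$ carries the discrete topology (this is a genuine short exact sequence of locally compact groups precisely because $\ker{\Delta_G}$ is open; one should recall here that the quotient topology on $G/\ker{\Delta_G}\cong\Delta_G(G)$ agrees with the discrete topology when the subgroup is open). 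Since $\ker{\Delta_G}\in\mathcal{C}'$ and $\Delta_G(G)$ is discrete, closure of $\mathcal{C}'$ under extensions by discrete groups gives $G\in\mathcal{C}'$. As $G\in\mathcal{C}$ was arbitrary, $\mathcal{C}\subseteq\mathcal{C}'$, which is the desired minimality.

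There is no real obstacle here — the proof is a direct assembly of Proposition~\ref{prop:short_exact_sequences} and the short exact sequence~(\ref{eqn:almost_unimodular_definition_SES}). The only point that deserves a sentence of care is the identification of the quotient $G/\ker{\Delta_G}$ with $\Delta_G(G)$ \emph{as a discrete group}: this is where the openness of $\ker{\Delta_G}$ is used (an open subgroup has discrete, hence locally compact, quotient), and it is exactly the content of the exact sequence~(\ref{eqn:au_ses}) discussed in the introduction. Beyond that, one should note that the phrase ``closed under extensions by discrete groups'' is being read as: whenever $1\to N\to G\to Q\to 1$ with $N\in\mathcal{C}'$ and $Q$ discrete, then $G\in\mathcal{C}'$ — which matches the hypothesis of Proposition~\ref{prop:short_exact_sequences} verbatim. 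With that reading fixed, both the closure properties of $\mathcal{C}$ and its minimality follow in a few lines.
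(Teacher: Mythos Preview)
Your proposal is correct and follows exactly the argument the paper intends: closure of $\mathcal{C}$ under discrete extensions is Proposition~\ref{prop:short_exact_sequences}, containment of unimodular groups is immediate, and minimality comes from applying the short exact sequence~(\ref{eqn:almost_unimodular_definition_SES}) to an arbitrary almost unimodular $G$. The paper states the corollary as an immediate consequence of precisely these two ingredients, so there is nothing to add.
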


Proposition~\ref{prop:short_exact_sequences} can be refined in the special case of (continuous) cocycle semidirect products. Recall that for groups $H$ and $N$, a \emph{cocycle action} $(\alpha,c)\colon H \curvearrowright N$ is a pair of maps $\alpha : H \to \Aut(N)$ and $c: H \times H \to N$ satisfying the relations
    \begin{align}\label{eqn:relations_of_cocycle_type}
        \alpha_s\alpha_t = \Ad{c(s,t)} \alpha_{st} \qquad \text{ and } \qquad c(s,t)c(st,r) = \alpha_s(c(t,r)) c(s,tr) \qquad s,t,r\in H.
    \end{align}
One can (and we will) always normalize the $2$-cocycle so that $c(s,e)=c(e,s)=e$ for all $s\in H$. For locally compact groups $H$ and $N$, we say the cocycle action $(\alpha,c)$ is \emph{continuous} if the maps $H\times N\ni (s,x)\mapsto \alpha_s(x)\in N$ and $H\times H\ni (s,t)\mapsto c(s,t)\in N$ are continuous (where the products are equipped with the product topology). In this case, the \emph{cocycle semidirect product} of this action is a locally compact group, denoted by $H \sltimes{(\alpha,c)} N$, consisting of the set $H \times N$ equipped with the product topology and the following group operations 
\[
    (s,x) (t,y) = (st, c(t^{-1},s^{-1})^{-1} \alpha_{t^{-1}}(x)y) \qquad \text{ and } \qquad (s,x)^{-1} = (s^{-1}, \alpha_s(x)^{-1}c(s,s^{-1})),
\]
where $(s,x),(t,y) \in H \sltimes{(\alpha,c)} N$. Note that a left Haar measure for this group is given by the Radon product $\mu_H \hat{\times} \mu_N$ for any left Haar measures $\mu_H$ and $\mu_N$ of $H$ and $N$, respectively. Additionally, using the relations in (\ref{eqn:relations_of_cocycle_type}), one can show
    \[
        \frac{d(\mu_N \circ \alpha_{st})}{d\mu_N}= \Delta_N(c(s,t)) \frac{d(\mu_N \circ \alpha_{s})}{d\mu_N}\frac{d(\mu_N \circ \alpha_{t})}{d\mu_N} \qquad \qquad s,t \in H.
    \]
It follows that the modular function for $H \sltimes{(\alpha,c)} N$ is given by
\begin{align}\label{eqn:cocycle_semidirect_product_modular_function}
   \Delta_{H \sltimes{(\alpha,c)} N}(s,x) : = \Delta_H(s) \Delta_N(x) \left(\Delta_N\left(c(s,s^{-1})\right) \frac{d(\mu_N \circ\alpha_{s})}{d\mu_N}\right)^{-1} \qquad \qquad (s,x) \in H \sltimes{(\alpha,c)} N.
\end{align}
Note that if $c$ is valued in $\ker{\Delta_N}$, then the right-hand side is also a homomorphism on $H\times N$.

\begin{prop}\label{prop:cocycle_semidirect_products}
Let $(\alpha,c)\colon H \curvearrowright N$ be a continuous cocycle action of almost unimodular groups. Then $H \sltimes{(\alpha,c)} N$ is almost unimodular if and only if 
\[
    \left\{ s \in H : \Delta_N\left(c(s,s^{-1})\right)\frac{d(\mu_N \circ\alpha_{s})}{d \mu_N} = 1\right\} 
\]   
is open in $H$. In particular, if $H$ is discrete then $H \sltimes{(\alpha,c)} N$ is always almost unimodular. 
\end{prop}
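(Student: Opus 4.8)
The plan is to read everything off the modular function of $G:=H\sltimes{(\alpha,c)}N$ recorded in (\ref{eqn:cocycle_semidirect_product_modular_function}). Set $\psi(s):=\Delta_N\big(c(s,s^{-1})\big)\frac{d(\mu_N\circ\alpha_s)}{d\mu_N}$, so that $\Delta_G(s,x)=\Delta_H(s)\Delta_N(x)\psi(s)^{-1}$; then $\psi(e)=1$ and the set in the statement is exactly $R:=\{s\in H:\psi(s)=1\}$. Since $\Delta_G$ is a continuous homomorphism, expanding $\Delta_G\big((s,x)(t,y)\big)=\Delta_G(s,x)\Delta_G(t,y)$ with the product formula of $H\sltimes{(\alpha,c)}N$ and using that the modular function of $N$ is invariant under the automorphisms $\alpha_r$ gives the ``twisted homomorphism'' identity
    \[
        \psi(st)=\psi(s)\,\psi(t)\,\Delta_N\big(c(t^{-1},s^{-1})\big)^{-1}\qquad s,t\in H,
    \]
which is the only structural property of $\psi$ the argument needs.

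For the forward implication, assume $G$ is almost unimodular, so that $\ker{\Delta_G}$ is open in the underlying space $H\times N$ of $G$; it therefore contains a basic open neighborhood $U\times V$ of $(e,e)$. Evaluating $\Delta_G$ at the points $(s,e)$ (which lie in $U\times V$ because $e\in V$) yields $\psi(s)=\Delta_H(s)$ for all $s\in U$, so $W:=U\cap\ker{\Delta_H}$ — an open neighborhood of $e$ since $H$ is almost unimodular — is contained in $R$. To promote ``$R$ is a neighborhood of $e$'' to ``$R$ is open'', fix $s_0\in R$. The twisted homomorphism identity gives $\psi(s s_0)=\Delta_N\big(c(s_0^{-1},s^{-1})\big)^{-1}$ for all $s\in W$. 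Now $s\mapsto c(s_0^{-1},s^{-1})$ is continuous and takes the value $e$ at $s=e$, and $\ker{\Delta_N}$ is an \emph{open} neighborhood of $e$ in $N$ because $N$ is almost unimodular; hence there is an open neighborhood $W_1\subseteq W$ of $e$ with $c(s_0^{-1},s^{-1})\in\ker{\Delta_N}$, and so $\psi(s s_0)=1$, for every $s\in W_1$. Thus $W_1 s_0\subseteq R$ is an open neighborhood of $s_0$, and $R$ is open.

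For the converse, assume $R$ is open. Then $R\cap\ker{\Delta_H}$ is a nonempty open subset of $H$ and $\ker{\Delta_N}$ is a nonempty open subset of $N$, so by inner regularity of Haar measure we may choose compact sets $C\subseteq R\cap\ker{\Delta_H}$ and $D\subseteq\ker{\Delta_N}$ with $0<\mu_H(C)<\infty$ and $0<\mu_N(D)<\infty$. Every $(s,x)\in C\times D$ satisfies $\Delta_H(s)=\Delta_N(x)=\psi(s)=1$ and hence lies in $\ker{\Delta_G}$; since $\mu_H\hat{\times}\mu_N$ is a left Haar measure on $G$ and $(\mu_H\hat{\times}\mu_N)(C\times D)=\mu_H(C)\mu_N(D)\in(0,\infty)$, Lemma~\ref{lem:open_via_Haar_measure} shows $\ker{\Delta_G}$ is open, i.e. $G$ is almost unimodular. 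The last assertion is then immediate: if $H$ is discrete every subset of $H$ is open, so $R$ is open.

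I expect the delicate step to be exactly the promotion from ``$R$ is a neighborhood of $e$'' to ``$R$ is open''. Because $\psi$ is only a twisted homomorphism, $R$ need not be a subgroup of $H$, so one cannot translate $W$ around naively; what makes it work is that the cocycle correction $\Delta_N\big(c(s_0^{-1},s^{-1})\big)$ is forced to be trivial for $s$ near $e$, precisely because the almost unimodularity of $N$ makes $\ker{\Delta_N}$ an \emph{open} subset of $N$.
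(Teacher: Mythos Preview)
Your proof is correct but takes a genuinely different route from the paper's. The paper identifies $N\cong\{(e,x):x\in N\}\unlhd G:=H\sltimes{(\alpha,c)}N$ and applies Proposition~\ref{prop:quotient_groups} to the quotient $G/N\cong H$: the map $\phi\colon G\to\R_+$ appearing there is a \emph{genuine} homomorphism with $\phi(s,x)=\Delta_N(x)^{-1}\psi(s)$, and one checks directly that $\ker\phi\cap(H\times\ker\Delta_N)=R\times\ker\Delta_N$. Since $\ker\phi$ is a subgroup, openness of $\ker\phi$ is equivalent to containing an open neighborhood of the identity, which is immediately equivalent to $R$ being open (using the almost unimodularity of $N$); the almost unimodularity of $H\cong G/N$ then feeds into Proposition~\ref{prop:quotient_groups}.

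By contrast, you work on $H$ with the map $\psi$, which is only a \emph{twisted} homomorphism; your set $R$ is therefore not a subgroup, and the forward implication genuinely requires the extra step you flag as delicate --- translating $W$ to $s_0$ via the cocycle identity and using the openness of $\ker\Delta_N$ to kill the correction term. This step is correct and nicely isolates where each hypothesis enters. The paper's route is shorter and more structural (it hides the cocycle correction inside the bona fide homomorphism $\phi$ on all of $G$), while yours is more self-contained and makes explicit how the almost unimodularity of both $H$ and $N$ are used. For the converse you could also bypass Lemma~\ref{lem:open_via_Haar_measure}: the product $(R\cap\ker\Delta_H)\times\ker\Delta_N$ is already an open neighborhood of $(e,e)$ inside $\ker\Delta_G$, so the subgroup $\ker\Delta_G$ is open.
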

\begin{proof}
Identifying $N \cong \{(e,x)\colon x\in N\}\leq H \sltimes{(\alpha,c)} N$, it follows that $s\mapsto (s,e)N$ defines an isomorphism from $H$ to $H \sltimes{(\alpha,c)} N/ N$. So comparing (\ref{eqn:cocycle_semidirect_product_modular_function}) with (\ref{eqn:mod_function_of_quotient_group}), we see that in this case the continuous homomorphism $\phi\colon H\sltimes{(\alpha,c)} N\to (0,\infty)$ has the form
    \[
        \phi(s,x) = \Delta_N(x)^{-1} \Delta_N\left(c(s,s^{-1})\right) \frac{d(\mu_N \circ\alpha_{s})}{\mu_N} \qquad (s,x)\in H\sltimes{(\alpha,c)} N.
    \]
Since $H \sltimes{(\alpha,c)} N/ N\cong H$ is assumed to be almost unimodular, Proposition~\ref{prop:quotient_groups} implies the cocycle semidirect product is almost unimodular if and only of $\ker{\phi}$ is open. Using that $\phi(s,x)= \Delta_N(x)^{-1} \phi(s,e)$, we see that
    \[
        \ker{\phi} = \bigsqcup_{\delta\in \Delta_N(N)} \{s\in H\colon \phi(s,e) = \delta^{-1}\} \times \Delta_N^{-1}(\{\delta\})
    \]
and consequently
    \[
        \ker{\phi}\cap \{(e,x)\colon x\in \ker{\Delta_N}\} = \{s\in H\colon \phi(s,e) = 1\}\times \ker{\Delta_N}.
    \]
Thus if $\{s\in H\colon \phi(s,e) = 1\}$ is open in $H$ then the right-hand side above is an open neighborhood of $(e,e)$ contained in the subgroup $\ker{\phi}$, which is therefore open. Conversely, if $\ker{\phi}$ is open, then the left-hand side above is open by the almost unimodularity of $N$, and consequently the projection onto its first coordinate, $\{s\in H\colon \phi(s,e) = 1\}$, is open in $H$.
\end{proof}

The following example, which will be needed in Section~\ref{sec:group_vNas}, shows that almost unimodular groups can always be realized as a continuous cocycle action of the image of the modular function and its kernel.

\begin{ex}\label{ex:cocycle_realization_of_almost_unimodular_groups}
Let $G$ be an almost unimodular group. Using the short exact sequence in (\ref{eqn:almost_unimodular_definition_SES}), choose a normalized section $\sigma: \Delta_G(G) \to G$ for $\Delta_G\colon G\to \Delta_G(G)$; that is, $\sigma(1)=e$ and $\Delta_G\circ \sigma(\delta)=\delta$ for all $\delta\in \Delta_G(G)$. Define maps
    \begin{align*}
        \alpha\colon \Delta_G(G) &\to \Aut(\ker{\Delta_G}) \qquad\qquad \text{ and }  &c \colon \Delta_G(G) \times \Delta_G(G) &\to \ker{\Delta_G}\\
            \delta &\mapsto \Ad{\sigma(\delta)} &(\delta_1,\delta_2) &\mapsto \sigma(\delta_1) \sigma(\delta_2)\sigma(\delta_1\delta_2)^{-1}.
    \end{align*}    
Then $(\alpha,c)\colon \Delta_G(G) \curvearrowright \ker{\Delta_G}$ is a continuous cocycle action and we can identify the cocycle semidirect product $\Delta_G(G)\sltimes{(\alpha,c)} \ker{\Delta_G}$ with $G$ via $(\delta, s) \mapsto s\sigma(\delta)$. 
$\hfill \blacksquare$
\end{ex}

\begin{rem}\label{rem:Galois_correspondence}
Any intermediate closed subgroup $\ker{\Delta_G}\leq H\leq G$ is automatically normal in $G$ since $G/\ker{\Delta_G}\cong \Delta_G(G)$ is abelian. Additionally, there is a 1-1 correspondence between such subgroups and subgroups $\Gamma\leq \Delta_G(G)$ given by $H\mapsto \Delta_G(H)$. Indeed, the inverse of this map is given by $\Gamma\mapsto \Gamma \sltimes{(\alpha,c)} \ker{\Delta_G}$, where $G\cong \Delta_G(G) \sltimes{(\alpha,c)} \ker{\Delta_G}$ is the identification from the previous example. This can be seen by noting that the containment $\ker{\Delta_G}\subset H$ implies $\Delta_G^{-1}(\Delta_G(H))=H$, whereas $\Delta_G\left(\Gamma \sltimes{(\alpha,c)} \ker{\Delta_G}\right)=\Gamma$ follows from $\sigma$ being a section associated to the modular function.$\hfill\blacksquare$
\end{rem}

Almost unimodularity is also preserved under fiber products:

\begin{prop}\label{prop:fiber_products}
Let $Q$ be a locally compact group and let $G_1,G_2$ be almost unimodular groups. Suppose $\theta_j\colon G_j\to Q$, $j=1,2$, are open, continuous homomorphisms. If $Q$ is almost unimodular, then $\{(s_1,s_2)\in G_1\times G_2\colon \theta_1(s_1)=\theta_2(s_2)\}$ is an almost unimodular group. If $\theta_1,\theta_2$ are surjective, then the converse holds.
\end{prop}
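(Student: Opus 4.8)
The plan is to realize the fiber product $P:=\{(s_1,s_2)\in G_1\times G_2\colon \theta_1(s_1)=\theta_2(s_2)\}$ as a quotient group in two different ways and apply Proposition~\ref{prop:quotient_groups} repeatedly. First note that $P$ is closed in $G_1\times G_2$---it is the preimage of the (closed, since $Q$ is Hausdorff) diagonal of $Q\times Q$ under the continuous map $(s_1,s_2)\mapsto(\theta_1(s_1),\theta_2(s_2))$---hence locally compact, and that $\ker\Delta_{G_1}\times\ker\Delta_{G_2}$ is open in $G_1\times G_2$ because $G_1$ and $G_2$ are almost unimodular. Put $M:=\ker\theta_1\times\ker\theta_2$, a closed normal subgroup of $G_1\times G_2$ contained in $P$, and let $\Theta\colon P\to Q$ be the continuous homomorphism $\Theta(s_1,s_2):=\theta_1(s_1)=\theta_2(s_2)$, which has kernel $M$ and image $R:=\theta_1(G_1)\cap\theta_2(G_2)$. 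Since $\theta_1,\theta_2$ are open, $R$ is an open subgroup of $Q$, and a routine computation with the basic open sets $P\cap(U_1\times U_2)$ shows that $\Theta\colon P\to R$ and the coordinate projections $p_j\colon P\to G_j$ are open; moreover $p_j$ is surjective exactly when $\theta_{3-j}$ is surjective. In particular $\Theta$ realizes $P/M\cong R$ as topological groups.

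Next I would compute the homomorphisms that Proposition~\ref{prop:quotient_groups} attaches to these data. Since $\mu_M=\mu_{\ker\theta_1}\times\mu_{\ker\theta_2}$ and conjugation by $(s_1,s_2)$ acts on $M$ as $\Ad{s_1}\times\Ad{s_2}$, the homomorphism $\phi\colon P\to(0,\infty)$ associated to $(P,M)$ factors as $\phi(s_1,s_2)=\psi_1(s_1)\psi_2(s_2)$ with $\psi_j(s_j):=\frac{d(\mu_{\ker\theta_j}\circ\Ad{s_j})}{d\mu_{\ker\theta_j}}$, while the homomorphism associated to the pair $\{e\}\times\ker\theta_2\unlhd P$ equals $\psi_2\circ p_2$. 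Applying (\ref{eqn:mod_function_of_quotient_group}) to $\ker\theta_j\unlhd G_j$---using that $\theta_j$ induces a topological isomorphism $G_j/\ker\theta_j\cong\theta_j(G_j)$ and that $\Delta_{\theta_j(G_j)}$ is the restriction of $\Delta_Q$ since $\theta_j(G_j)$ is open in $Q$ (Lemma~\ref{lem:open_via_Haar_measure})---yields $\psi_j(s_j)=\Delta_Q(\theta_j(s_j))\,\Delta_{G_j}(s_j)^{-1}$.

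For the forward implication, suppose $Q$ is almost unimodular. Then $R$ is an open subgroup of $Q$, hence almost unimodular, so $P/M\cong R$ is almost unimodular. The subgroup $\Omega:=(\ker\Delta_{G_1}\times\ker\Delta_{G_2})\cap\Theta^{-1}(\ker\Delta_Q)$ of $P$ is open in $P$---because $\ker\Delta_{G_1}\times\ker\Delta_{G_2}$ is open in $G_1\times G_2$, $\ker\Delta_Q$ is open in $Q$, and $\Theta$ is continuous---and lies in $\ker\phi$, since on $\Omega$ we have $\Delta_{G_1}(s_1)=\Delta_{G_2}(s_2)=1$ and $\theta_j(s_j)\in\ker\Delta_Q$, so $\psi_1(s_1)=\psi_2(s_2)=1$. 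A subgroup containing an open subgroup is open, so $\ker\phi$ is open, and Proposition~\ref{prop:quotient_groups} (applied to $M\unlhd P$, now knowing that $P/M$ is almost unimodular and $\ker\phi$ is open) gives that $P$ is almost unimodular.

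For the converse, assume in addition that $\theta_1,\theta_2$ are surjective and that $P$ is almost unimodular; then $R=Q$ and both $p_j\colon P\to G_j$ are surjective open quotient maps. Set $N:=\ker p_1=\{e\}\times\ker\theta_2$, so that $P/N\cong G_1$ is almost unimodular by hypothesis; since $P$ is also almost unimodular, Proposition~\ref{prop:quotient_groups} forces the kernel of the homomorphism $\psi_2\circ p_2$ attached to $N\unlhd P$ to be open in $P$. Pushing this open subgroup forward along the surjective open map $p_2$ shows that $\ker\psi_2$ is open in $G_2$. Finally, $\psi_2$ is precisely the homomorphism attached by Proposition~\ref{prop:quotient_groups} to $\ker\theta_2\unlhd G_2$, for which $G_2/\ker\theta_2\cong Q$; as $G_2$ is almost unimodular and $\ker\psi_2$ is open, that proposition gives that $Q\cong G_2/\ker\theta_2$ is almost unimodular. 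The only non-formal point is this last pushforward, which needs $p_2$ to be both open and surjective---exactly where the surjectivity hypothesis on the $\theta_j$ is used; the remaining care is just in tracking the repeated uses of (\ref{eqn:mod_function_of_quotient_group}) and of Proposition~\ref{prop:quotient_groups} through the identifications $P/M\cong R$, $P/N\cong G_1$, and $G_j/\ker\theta_j\cong\theta_j(G_j)$.
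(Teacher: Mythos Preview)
Your proof is correct and takes a genuinely different route from the paper's. The paper treats the fiber product $H$ as a closed subgroup of $G_1\times G_2$ and appeals to Proposition~\ref{prop:almost_unimodular_subgroups}: it identifies the homogeneous space $(G_1\times G_2)/H$ with an open subset $X\subset Q$ via the (non-homomorphic) map $(s_1,s_2)H\mapsto\theta_1(s_1)\theta_2(s_2^{-1})$, pulls back $\mu_Q|_X$ to a strongly quasi-invariant Radon measure on $(G_1\times G_2)/H$, extracts the associated rho-function $\rho$, and reads off directly that $\{t\in H:\rho(t)=\rho(e)\}=H\cap[G_1\times\theta_2^{-1}(\ker\Delta_Q)]$. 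You instead work entirely through Proposition~\ref{prop:quotient_groups} and the formula~(\ref{eqn:mod_function_of_quotient_group}), realizing $P$ as an extension of $R=\theta_1(G_1)\cap\theta_2(G_2)$ by $M=\ker\theta_1\times\ker\theta_2$ and computing the conjugation homomorphism explicitly as $\phi(s_1,s_2)=\Delta_Q(\Theta(s_1,s_2))^2\,\Delta_{G_1}(s_1)^{-1}\Delta_{G_2}(s_2)^{-1}$; for the converse you switch to the quotient $P/(\{e\}\times\ker\theta_2)\cong G_1$ and push the open kernel $\ker(\psi_2\circ p_2)$ forward along $p_2$ to land in $G_2$, then invoke Proposition~\ref{prop:quotient_groups} once more for $\ker\theta_2\unlhd G_2$. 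Your approach avoids the measure-theoretic construction on the homogeneous space and stays at the level of group homomorphisms and modular functions, which is cleaner; the paper's argument, by contrast, makes the connection to rho-functions explicit and yields the concrete description~(\ref{eqn:rho-function_for_fiber_products}) of the relevant open subgroup as a byproduct. Both need, and both verify, that the coordinate projections $p_j|_P$ are open and (under surjectivity of $\theta_1,\theta_2$) surjective.
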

\begin{proof}
Denote $H:=\{(s_1,s_2)\in G_1\times G_2\colon \theta_1(s_1)=\theta_2(s_2)\}$, which forms a closed subgroup of $G_1\times G_2$. Consider the map
    \begin{align*}
        \chi\colon G_1\times G_2 &\to Q\\
            (s_1,s_2) &\mapsto \theta_1(s_1)\theta_2(s_2^{-1}), 
    \end{align*}
which is open and continuous by the assumptions on $\theta_1, \theta_2$ (though it need not be a homomorphism). Thus $X:=\chi(G_1\times G_2)\subset Q$ is an open subset, and the left Haar measure $\mu_Q$ of $Q$ restricts to a non-trivial Radon measure on $X$ that we denote by $\mu$. A direct computation shows that
    \begin{align*}
        \bar{\chi}\colon (G_1\times G_2)/H & \to X\\
         (s_1,s_2)H &\mapsto \chi(s_1,s_2)
    \end{align*}
is a well-defined bijection. Equipping $(G_1\times G_2)/H$ with the quotient topology, it is straightforward to verify that $\bar{\chi}$ is a homeomorphism, and so $\nu:=\mu\circ \bar{\chi}$ defines a non-trivial Radon measure on the homogeneous space $(G_1\times G_2)/H$. Observe that for a Borel set $E\subset (G_1\times G_2)/H$ and $(s_1,s_2)\in G_1\times G_2$ we have
    \[
        \nu( (s_1,s_2)\cdot E) = \mu( \theta_1(s_1) \bar{\chi}(E) \theta_2(s_2^{-1})) = \Delta_Q(\theta_2(s_2^{-1})) \nu( E).
    \]
Hence $\nu$ is a strongly quasi-invariant measure (see \cite[Section 2.6]{Fol16}), and \cite[Theorems 2.56 and 2.59]{Fol16} imply there is a rho-function $\rho$ for $(G_1\times G_2,H)$ satisfying
    \[
        \frac{\rho(s_1t_1,s_2t_2)}{\rho(t_1,t_2)} = \frac{d\nu((s_1,s_2)\ \cdot\ )}{d\nu}\left( (t_1,t_2)H\right) = \Delta_Q(\theta_2(s_2^{-1})) \qquad s_1,t_1\in G_1,\, s_2,t_2\in G_2.
    \]
In particular, one has
    \begin{align}\label{eqn:rho-function_for_fiber_products}
        \left\{ (t_1,t_2)\in H\colon \rho(t_1,t_2) = \rho(e,e) \right\} = H\cap [ G_1\times \theta_2^{-1}(\ker{\Delta_Q})]
    \end{align}
(Note that the above set also equals $H\cap [\theta_1^{-1}(\ker{\Delta_Q})\times G_2]$ by definition of $H$). Thus if $Q$ is almost unimodular, then the above is a relatively open subset of $H$, and hence $H$ is almost unimodular by Proposition~\ref{prop:almost_unimodular_subgroups}.

Now suppose $\theta_1,\theta_2$ are surjective and that $H$ is almost unimodular. The latter implies the set in (\ref{eqn:rho-function_for_fiber_products}) is open in $H$ by Proposition~\ref{prop:almost_unimodular_subgroups}. We will show that the coordinate projections $\pi_j\colon G_1\times G_2\to G_j$ for $j=1,2$ restrict to open and surjective maps on $H$; this will complete the proof because $\ker{\Delta_Q}$ will therefore be the open image of the set in (\ref{eqn:rho-function_for_fiber_products}) under the map  $\theta_2\circ \pi_2$. The surjectivity of $\pi_1|_H$ follows from that of $\theta_2$: for any $s_1\in G_1$ we can find $s_2\in G_2$ with $\theta_2(s_2) = \theta_1(s_1)$, and therefore $(s_1,s_2)\in H$. Similarly, the surjectivity of $\pi_2|_H$ follows from that of $\theta_1$. Toward showing the openness of these restrictions, consider $(s_1,s_2) \in H\cap (U_1\times U_2)$ with $U_1\subset G_1$ and $U_2\subset G_2$ open. Then
    \[
        V_j:= U_j\cap \theta_j^{-1}[\theta_1(U_1)\cap \theta_2(U_2)]
    \]
is an open neighborhood of $s_j$ for each $j=1,2$, and one has
    \[
        \pi_j( H\cap (U_1\times U_2)) \supset \pi_j( H\cap (V_1\times V_2)) = V_j.
    \]
Thus $\pi_j(H\cap (U_1\times U_2))$ is a neighborhood of $s_j$, and it follows that $\pi_j|_H$ are open maps for each $j=1,2$.
\end{proof}

\begin{rem}
By Goursat's Lemma, it is well-known for discrete groups that subdirect products  arise as special cases of fiber products. Under suitable assumptions, this can also be seen for locally compact groups as follows. Given locally compact groups $G_1,G_2$, for each $j=1,2$ let $\pi_j\colon G_1\times G_2 \to G_j$ and $\iota_j\colon G_j\to G_1\times G_2$ be the coordinate projection and injection, respectively. If $H\leq G_1\times G_2$ is a closed subgroup such that $\pi_j|_H$ surjective for each $j=1,2$, then Goursat's Lemma implies the map
    \begin{align*}
       \phi\colon G_1/ \iota_1^{-1}(H) &\to G_2/\iota_2^{-1}(H)\\
         s_1 \iota_1^{-1}(H) & \mapsto s_2 \iota_2^{-1}(H)
    \end{align*}
is a group isomorphism, where $s_2\in G_2$ is any element satisfying $(s_1,s_2)\in H$. If one further assumes each $\pi_j|_H$ is open for $j=1,2$, then this map becomes a homeomorphism and therefore defines an isomorphism of locally compact groups. Letting $Q$ be a representative of this isomorphism class, we obtain open, continuous surjections $\theta_j\colon G_j\to Q$ via the quotient maps, and one has 
    \[
        H= \{(s_1,s_2)\in G_1\times G_2\colon \theta_1(s_1) = \theta_2(s_2)\}.
    \]
Hence the subdirect product $H$ is also a fiber product. In particular, when $G_1$ and $G_2$ are almost unimodular, Proposition~\ref{prop:fiber_products} implies $Q$ is almost unimodular if and only if $H$ is almost unimodular.$\hfill\blacksquare$
\end{rem}

\begin{exs}
    \begin{enumerate}
    \item[]
    \item If $H,N$ are almost unimodular groups, then their direct product $H\times N$ is almost unimodular by Proposition~\ref{prop:cocycle_semidirect_products} and one has
        \[
            \Delta_{H\times N}(H\times N) = \Delta_H(H)\vee \Delta_N(N)
        \]
    and
        \[
            \ker{\Delta_{H\times N}} = \bigcup_{\delta\in \Delta_H(H)\cap \Delta_N(N)} \Delta_H^{-1}(\{\delta\})\times \Delta_N^{-1}(\{1/\delta\}).
        \]
    By considering any non-discrete almost unimodular group $H$, we see that $H \cong (H\times N)/N$ shows that the discreteness hypothesis in Proposition~\ref{prop:short_exact_sequences} is not necessary in general. This is in contrast to $N$ being normal in $H\times N$, which is necessary by Proposition~\ref{prop:almost_unimodular_subgroups}.

    \item\label{ex:restricted_direct_products}  Let $\{(G_i, K_i)\colon i\in I\}$ be a family of locally compact  groups with compact, open subgroups $K_i\leq G_i$. Their restricted direct product is the direct limit
    \[
        \prod_{i\in I} (G_i, K_i) := \lim_{F\to\infty} \left( \prod_{i\in F} G_i\right) \times \left(\prod_{i\in I\setminus F} K_i \right),
    \]
    over finite subsets $F\Subset I$, equipped with the final topology. Each infinite direct product is a locally compact group with left Haar measure given by the infinite product of left Haar measures $\mu_i$ on $G_i$ normalized so that $\mu_i(K_i)=1$. For each $i\in I$, $G_i$ is almost unimodular since $K_i$ is an open subset of $\ker{\Delta_{G_i}}$. The restricted direct product is itself almost unimodular since the kernel of the modular function of $\prod_{i\in I} (G_i, K_i)$ contains the open set $\prod_{i\in I} K_i$ and the image of its modular function is $\bigvee_{i\in I} \Delta_{G_i}(G_i) \leq \R_+$.
    
    \item Let $G,H$ be almost unimodular groups, and suppose $\{K_s \leq G\colon s\in H\}$ is a family of compact, open subgroups of $G$ such that $H\ni s\mapsto \frac{\mu_G(K_s)}{\mu_G(K_1)}$ is a continuous homomorphism (e.g. the trivial homomorphism if $K_s\equiv K_1$ for all $s\in H$). Then the translation action
        \[
            \alpha\colon H \curvearrowright \prod_{s\in H} (G,K_s)
        \]
    satisfies 
        \[
            \frac{d(\mu\circ \alpha_s)}{d\mu} = \frac{\mu_G(K_{s^{-1}})}{\mu_G(K_1)},
        \]
    where $\mu$ is the left Haar measure on the restricted direct product obtained from the family of left Haar measures $\{ \frac{1}{\mu_G(K_s)}\mu_G \colon s\in H\}$. Thus Proposition~\ref{prop:cocycle_semidirect_products} implies
        \[
            H \sltimes{\alpha} \prod_{s\in H} (G,K_s)
        \]
    is almost unimodular if and only if $\{s\in H\colon \mu_G(K_s) = \mu_G(K_1)\}$ is open. This holds, for example, when $K_s= K_1$ for all $s\in H$, in which case the above can be thought of as a kind of wreath product $(G,K_1) \wr H$. $\hfill\blacksquare$
    \end{enumerate}
\end{exs}

\begin{rem}
For a second countable locally compact group $G$ equipped with a continuous homomorphism $\omega\colon G\to (0,\infty)$, one has $\ker{\omega}$ is open if and only if $\omega(G)$ is countable. Indeed, $\ker{\omega}$ being open implies $\{\omega^{-1}(\{\delta\})\colon \delta\in \omega(G)\}$ is a disjoint family of open sets, and so second countability forces $\omega(G)$ to be countable. Conversely, if $\omega(G)$ is countable then we must have $\mu_G(\ker{\omega})>0$ since otherwise we obtain the contradiction
    \[
        \mu_G(G) = \sum_{\delta\in \omega(G)} \mu_G(\omega^{-1}(\{\delta\})) = \sum_{\delta\in \omega(G)} \mu_G(\ker{\omega}) = 0.
    \]
So $\mu_G(\ker{\omega})>0$, and as a subgroup of a second countable group it follows that $\ker{\omega}$ is open. Consequently, for second countable locally compact groups the open kernel statements in Propositions~\ref{prop:almost_unimodular_subgroups} and  \ref{prop:quotient_groups} can be replaced with countable image statements. One can also do this for Proposition~\ref{prop:cocycle_semidirect_products} when the $2$-cocycle $c\colon H\times H\to N$ is valued in $\ker{\Delta_N}$. $\hfill\blacksquare$
\end{rem}

\section{Square Integrable Representations}\label{sec:representation_theory}

In this section we establish connections between square integrable representations of an almost unimodular group $G$ and those of its unimodular part $\ker{\Delta_G}$. We begin, however, by characterizing when arbitrary representations of $G$ are induced by some representation of $\ker{\Delta_G}$. Fortunately, the discreteness of the quotient $\Delta_G(G)\cong G/\ker{\Delta_G}$ in this case greatly simplifies the description of an induced representation. For a representation $(\pi_1,\H_1)$ of $\ker{\Delta_G}$ and a fixed normalized section $\sigma\colon \Delta_G(G)\to G$ for $\Delta_G$, the induced representation $\Ind_{\ker{\Delta_G}}^G(\pi_1,\H_1)$ can be realized as
    \[
        \text{Ind}^G_{\ker{\Delta_G}} \H_1 \cong \ell^2(\Delta_G(G)) \otimes \H_1,
    \]
such that for all $f \in \ell^2(\Delta_G(G))\otimes \H_1,$
    \[
        [(\text{Ind}^G_{\ker{\Delta_G}}\pi_1)(s)f](\delta) = \pi_1\left(\sigma(\delta)^{-1} s\sigma ( \Delta_G(s)^{-1}\delta)\right) f(\Delta_G(s)^{-1}\delta), \qquad s\in G, \delta \in \Delta_G(G),
    \]
(see \cite[Proposition 2.3]{KT13}). Note that the above is independent of the choice of section $\sigma$, up to unitary equivalence.

\begin{thm}\label{thm:reps_induced_from_kernel}
Let $(\pi, \H)$ be a representation of an almost unimodular group $G$. The following are equivalent:
\begin{enumerate}[label=(\roman*)]
    \item $(\pi,\H)\cong \Ind^G_{\ker{\Delta_G}}(\pi_1,\H_1)$ for some representation $(\pi_1,\H_1)$ of $\ker{\Delta_G}$.

    \item There exists a representation $U: \Delta_G(G)\hat{\ } \to U(\H)$ satisfying $U_\gamma \pi(s) U_\gamma^* = (\gamma \mid \Delta_G(s)) \pi(s)$, where $(\cdot\mid \cdot)\colon \Delta_G(G)\hat{\ }\times \Delta_G(G)\to \mathbb{T}$ is the dual pairing.
    
\end{enumerate}
\end{thm}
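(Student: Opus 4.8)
Write $\Gamma := \Delta_G(G)$, a discrete abelian group, so that $\widehat\Gamma := \Delta_G(G)\hat{\ }$ is a compact abelian group whose Pontryagin dual is canonically $\Gamma$. The plan is to prove the two implications separately: for (i)$\Rightarrow$(ii) I would transport everything into the explicit model $\ell^2(\Gamma)\otimes\H_1$ for the induced representation recalled before the statement, and for (ii)$\Rightarrow$(i) I would use the isotypic decomposition of the representation $U$ of the compact group $\widehat\Gamma$.

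For (i)$\Rightarrow$(ii): since both conditions are invariant under unitary equivalence, I may assume $(\pi,\H)$ is literally $\Ind^G_{\ker{\Delta_G}}(\pi_1,\H_1)$ realized on $\ell^2(\Gamma)\otimes\H_1$ by the displayed formula. I would take $U_\gamma$ to be the diagonal unitary $(U_\gamma f)(\delta) = (\gamma\mid\delta)f(\delta)$. That $\gamma\mapsto U_\gamma$ is a strongly continuous unitary representation of $\widehat\Gamma$ is immediate from the bilinearity and joint continuity of the dual pairing together with a routine density argument on finitely supported $f$. The covariance relation is then a one-line computation from the explicit formula for $\Ind^G_{\ker{\Delta_G}}\pi_1$: conjugating by $U_\gamma$ multiplies the $\delta$-component by $(\gamma\mid\delta)\,\overline{(\gamma\mid\Delta_G(s)^{-1}\delta)} = (\gamma\mid\Delta_G(s))$, which is independent of $\delta$, giving $U_\gamma\pi(s)U_\gamma^* = (\gamma\mid\Delta_G(s))\pi(s)$.

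For (ii)$\Rightarrow$(i), the substantive direction: given $U$, decompose $\H$ along $\widehat\Gamma$. By the isotypic (spectral) decomposition of a strongly continuous unitary representation of the compact abelian group $\widehat\Gamma$, one has $\H = \overline{\bigoplus_{\delta\in\Gamma}\H_\delta}$, where $\H_\delta := \{\xi\in\H : U_\gamma\xi = (\gamma\mid\delta)\xi \text{ for all }\gamma\in\widehat\Gamma\}$, with mutually orthogonal spectral projections $P_\delta = \int_{\widehat\Gamma}\overline{(\gamma\mid\delta)}\,U_\gamma\,d\gamma$ (normalized Haar measure) summing strongly to $1$. The covariance relation forces $\pi(s)\H_\delta\subseteq\H_{\Delta_G(s)\delta}$; in particular, $\H_1 := P_1\H$ (with $1$ the identity of $\Gamma$, so $\H_1$ is the fixed space of $U$) is invariant under $\pi|_{\ker{\Delta_G}}$, and I set $(\pi_1,\H_1) := (\pi|_{\ker{\Delta_G}},\H_1)$. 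Fixing a normalized section $\sigma\colon\Gamma\to G$ for $\Delta_G$, the covariance relation together with $\Delta_G(\sigma(\delta))=\delta$ shows $\pi(\sigma(\delta))$ maps $\H_1$ onto $\H_\delta$ with inverse $\pi(\sigma(\delta)^{-1})|_{\H_\delta}$. Hence
\[
    W\colon \ell^2(\Gamma)\otimes\H_1 \to \H, \qquad Wf := \sum_{\delta\in\Gamma}\pi(\sigma(\delta))\,f(\delta),
\]
is a well-defined surjective isometry (orthogonality of the $\H_\delta$ gives the isometry, and $\bigoplus_\delta\H_\delta\subseteq\operatorname{ran}W$ gives surjectivity). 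It remains to check $W$ intertwines $\Ind^G_{\ker{\Delta_G}}\pi_1$ with $\pi$: expanding $W[(\Ind^G_{\ker{\Delta_G}}\pi_1)(s)f]$ via the explicit formula, the key point is that $\sigma(\delta)^{-1}s\sigma(\Delta_G(s)^{-1}\delta)\in\ker{\Delta_G}$, so on $\H_1$ one has $\pi(\sigma(\delta))\pi_1\!\left(\sigma(\delta)^{-1}s\sigma(\Delta_G(s)^{-1}\delta)\right) = \pi(s)\pi(\sigma(\Delta_G(s)^{-1}\delta))$; re-indexing the sum by $\delta' = \Delta_G(s)^{-1}\delta$ then yields $\pi(s)Wf$. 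This gives $(\pi,\H)\cong\Ind^G_{\ker{\Delta_G}}(\pi_1,\H_1)$.

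I expect the main obstacle to be the bookkeeping in (ii)$\Rightarrow$(i): carefully justifying that the isotypic decomposition of the compact-group representation $U$ is indexed precisely by $\Gamma$ via Pontryagin duality, that the spectral projections sum strongly to $1$ (which is what makes $W$ surjective), and then tracking the section $\sigma$ cleanly through the induced-representation formula so that the cocycle $\sigma(\delta)^{-1}s\sigma(\Delta_G(s)^{-1}\delta)$ matches up with $\pi$. Each individual step is standard, but the identification $\H\cong\ell^2(\Gamma)\otimes\H_1$ through $\sigma$ and its compatibility with the explicit formula for $\Ind^G_{\ker{\Delta_G}}\pi_1$ is where the care is needed.
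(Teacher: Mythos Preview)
Your proposal is correct and follows essentially the same approach as the paper: in both directions you and the paper use the same diagonal construction of $U$ from the isotypic projections $p_\delta$, and in (ii)$\Rightarrow$(i) you both take $(\pi_1,\H_1)$ to be the restriction of $\pi$ to the $1$-isotypic subspace and build the intertwiner via a fixed section $\sigma$. The only cosmetic differences are that the paper defines the intertwiner $W\colon \H\to\ell^2(\Gamma)\otimes\H_1$ in the opposite direction (as $[W\xi](\delta)=\pi(\sigma(\delta)^{-1})p_\delta\xi$, the inverse of your map) and explicitly checks that all $p_\delta$ are nonzero, a point your argument implicitly handles since $\pi(\sigma(\delta))$ carries $\H_1$ unitarily onto $\H_\delta$.
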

\begin{proof} Throughout we denote $K := \Delta_G(G)\hat{\ }$, $G_1:= \ker\Delta_G$, and let $\sigma\colon \Delta_G(G)\to G$ be a fixed normalized section for $\Delta_G$.\\

\noindent \textbf{(i)$\Rightarrow$(ii):} We identify $\H$ with $\Ind^G_{G_1} \H_1$ to obtain $\H = \bigoplus_{\delta \in \Delta_G(G)} \pi(\sigma(\delta))\H_1$. Set $p_{\delta} := [\pi(\sigma(\delta))\H_1]$ for $\delta \in \Delta_G(G)$ to obtain a family of pairwise orthogonal projections summing to $1$. Observe that for $s \in G$, $\delta,\varepsilon \in \Delta_G(G)$, and $\xi \in \H_1$ we have
    \begin{align*}
        \pi(s)p_\delta \pi(\sigma(\varepsilon))\xi &= 1_{\{\delta\}}(\varepsilon)\pi(s \sigma(\varepsilon)) \xi \\
            &= 1_{\{\delta\}}(\varepsilon) \pi(\sigma(\Delta_G(s)\varepsilon)) \pi( \sigma(\Delta_G(s)\varepsilon)^{-1} s \sigma(\varepsilon)) \xi\\
            &= p_{\Delta_G(s)\delta} \pi(\sigma(\Delta_G(s)\varepsilon)) \pi( \sigma(\Delta_G(s)\varepsilon)^{-1} s \sigma(\varepsilon)) \xi = p_{\Delta_G(s)\delta}\pi(s) \pi(\sigma(\varepsilon))\xi.
    \end{align*}
This shows
    \begin{align}\label{eqn:comutation_relation_of_P_lambda_and_pi(G)}
        p_{\Delta_G(s)\delta} = \pi(s)p_{\delta} \pi(s)^* \qquad s\in G,\ \delta\in \Delta_G(G).
    \end{align}
Thus if we define a representation $U : K \to U(\H)$ by 
    \begin{align}\label{eqn:covariant_rep_of_point_modular_dual_group}
        U_\gamma := \sum_{\delta \in \Delta_G(G)} (\gamma \mid \delta) p_\delta\quad \gamma \in K,
    \end{align}
then $U_\gamma \pi(s) U_\gamma^* = (\gamma \mid \Delta_G(s)) \pi(s)$ holds for $\gamma \in K$ and $s \in G$.\\

\noindent\textbf{(ii)$\Rightarrow$(i):} Since $K$ is a compact group, $U$ is given by (\ref{eqn:covariant_rep_of_point_modular_dual_group}) for some family $\{p_\delta \in B(\H)\colon \delta\in \Delta_G(G)\}$ of pairwise orthogonal projections that sum to $1$ (see \cite[Theorem 4.44]{Fol16}). Since these projections form a partition of unity, there must exist at least one $\delta_0 \in \Delta_G(G)$ such that $p_{\delta_0}$ is non-zero. But then for $\xi \in p_{\delta_0}\H\setminus \{0\}$ and any $s\in G$, we have
    \[
        U_\gamma \pi(s) \xi = (\gamma \mid \Delta_G(s))\pi(s) U_\gamma \xi = (\gamma \mid \Delta_G(s))(\gamma \mid \delta_0)\pi(s) \xi =(\gamma \mid \Delta_G(s)\delta_0 )\pi(s) \xi.
    \]
Thus $\pi(s)\xi\in p_{\Delta_G(s)\delta_0} \H \setminus \{0\}$. By varying $s$ over $\{\sigma(\delta) \sigma(\delta_0)^{-1}\colon \delta\in \Delta_G(G)\}$, we see that $p_{\delta}$ is non-zero for all $\delta\in \Delta_G(G)$. Moreover, the above computation implies that (\ref{eqn:comutation_relation_of_P_lambda_and_pi(G)}) holds. In particular, $\pi(G_1)$ commutes with $p_\delta$ for all $\delta\in \Delta_G(G)$, and so each $(\pi|_{G_1}, p_{\delta}\H)$ gives a representation of $G_1$. Set $(\pi_1,\H_1) := (\pi|_{G_1}, p_1\H)$ and consider the unitary $W :\H \to \ell^2(\Delta_G(G)) \otimes \H_1$ defined by $[W\xi](\delta) = \pi(\sigma(\delta)^{-1}) p_{\delta}\xi$ for $\xi \in \H$ and $\delta\in \Delta_G(G)$. For $\xi \in \H$, $s\in G$, and $\delta \in \Delta_G(G)$, we compute using (\ref{eqn:comutation_relation_of_P_lambda_and_pi(G)}) that
    \begin{align*}
        [W\pi(s) \xi](\delta) = \pi(\sigma(\delta)^{-1} s) p_{\Delta_G(s)^{-1}\delta} \xi = \pi(\sigma(\delta)^{-1} s \sigma(\Delta_G(s)^{-1}\delta) ) [W\xi](\Delta_G(s)^{-1}\delta) =\left[(\Ind_{G_1}^G \pi_1)(s)W\xi\right](\delta).
    \end{align*}
Thus $(\pi,\H)\cong \Ind_{G_1}^G(\pi_1,\H_1)$.
\end{proof}

\subsection{Irreducible square integrable representations}

Let $(\pi,\H)$ be an irreducible representation of a locally compact group $G$. Recall that $(\pi, \H)$ is said to be \textit{square integrable} if there exists $\xi,\eta \in \H$ such that the function $c_{\xi,\eta}(s) := \langle \pi(s) \xi,\eta\rangle$, called a \textit{coefficient of} $\pi$, is a non-zero element in $L^2(G)$ (see the discussion following \cite[Theorem 2]{DM76}). By \cite[Theorem 2]{DM76}, $(\pi,\H)$ is subequivalent to the left regular representation of $G$. These conditions give rise to the \emph{formal degree operator} of $(\pi,\H)$, which is a unique non-zero, self-adjoint, positive operator $D$ on $\H$ that satisfies
    \[
        \pi(s)D \pi(s)^* = \Delta_G(s)^{-1} D \qquad s\in G.
    \]
The above property of $D$ is referred to as \emph{semi-invariance} with weight $\Delta_G^{-1}$ (see \cite[Section 1]{DM76}). One has $\xi \in \dom(D^{-1/2})$ if and only if $c_{\xi,\eta}\in L^2(G)$ for all $\eta\in \H$, and, moreover, the following orthogonality relation holds
    \begin{align}\label{eqn:formal_degree_operator_equation}
        \langle c_{\xi_1,\eta_1}, c_{\xi_2,\eta_2}\rangle_{L^2(\mu_G)} = \int_{G} \langle\pi(s) \xi_1,\eta_1\rangle \overline{\langle\pi(s) \xi_2,\eta_2\rangle} d\mu(s) = \langle D^{-\frac12}\xi_1,D^{-\frac12} \xi_2\rangle \langle \eta_2, \eta_1\rangle,
    \end{align}
for $\xi_1,\xi_2 \in \dom(D^{-1/2})$ and $ \eta_1,\eta_2 \in \H$ (see \cite[Theorem 3]{DM76}).

When $G$ is unimodular, the formal degree operator is a positive multiple of the identify operator $D=d_\pi 1$, and this constant $d_\pi$ is known as the \textit{formal degree} of $(\pi,\H)$. The following result shows that almost unimodularity tempers the formal degree operator and relates it directly to a formal degree of $\ker \Delta_G$.

\begin{thm}\label{thm:sq_int_irreps}
Let $(\pi,\H)$ be an irreducible square integrable representation of an almost unimodular group $G$. Then there exists an irreducible square integrable representation $(\pi_1,\H_1)$ of $\ker{\Delta_G}$ such that $(\pi,\H)\cong \Ind_{\ker{\Delta_G}}^G(\pi_1,\H_1)$, and the formal degree operator $D$ for $(\pi,\H)$ is diagonalizable with
    \[
        D= \sum_{\delta\in \Delta_G(G)} (d_{\pi_1}\delta) 1_{\{d_{\pi_1}\delta\}}(D),
    \]
where $d_{\pi_1}$ is the formal degree of $(\pi_1,\H_1)$.
\end{thm}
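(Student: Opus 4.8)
The plan is to use irreducibility to diagonalize the formal degree operator $D$, then build from its eigenprojections the covariant unitary representation of $\Delta_G(G)\hat{\ }$ required by Theorem~\ref{thm:reps_induced_from_kernel}, and finally pin down the eigenvalue $d_{\pi_1}$ by comparing the Duflo--Moore orthogonality relations for $G$ and for $\ker \Delta_G$. For the diagonalization, the semi-invariance $\pi(s) D \pi(s)^* = \Delta_G(s)^{-1} D$ yields, via functional calculus, $\pi(s) 1_E(D) \pi(s)^* = 1_{\Delta_G(s) E}(D)$ for every Borel $E \subseteq (0,\infty)$. If $C \subseteq (0,\infty)$ is a coset of the subgroup $\Delta_G(G)$, then $\Delta_G(s) C = C$ for all $s$, so $1_C(D)$ commutes with $\pi(G)$ and hence lies in $\pi(G)' = \C 1$ by irreducibility; thus $1_C(D) \in \{0,1\}$. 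Since the cosets partition $(0,\infty)$ and $D \neq 0$, exactly one coset $C_0 = d_0 \Delta_G(G)$ has $1_{C_0}(D) = 1$; as $C_0$ is countable (in the second countable case; in general one argues on the $\sigma$-compact supports of the scalar spectral measures $\langle 1_{(\cdot)}(D)\xi,\xi\rangle$), the spectral measure of $D$ is atomic and concentrated on $C_0$, so $D$ is diagonalizable with all eigenvalues in $C_0$. Writing $p_\delta := 1_{\{d_0 \delta\}}(D)$, conjugation by any $s$ with $\Delta_G(s) = \varepsilon$ sends $p_\delta$ to $p_{\varepsilon\delta}$; since $\Delta_G$ is onto $\Delta_G(G)$ and some $p_\delta \neq 0$, all the $p_\delta$ are mutually unitarily conjugate, hence nonzero, and $\sum_\delta p_\delta = 1$ with $d_0 \Delta_G(G)$ the full point spectrum.

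Next I would invoke Theorem~\ref{thm:reps_induced_from_kernel}. Define $U \colon \Delta_G(G)\hat{\ } \to U(\H)$ by $U_\gamma := \sum_{\delta \in \Delta_G(G)} (\gamma \mid \delta)\, p_\delta$; this is a strongly continuous unitary representation of the compact group $\Delta_G(G)\hat{\ }$ because the $p_\delta$ are pairwise orthogonal and sum to $1$ and each $\gamma \mapsto (\gamma \mid \delta)$ is a character. For $\xi \in p_\delta \H$ one has $\pi(s)\xi \in p_{\Delta_G(s)\delta}\H$, so $U_\gamma \pi(s)\xi = (\gamma \mid \Delta_G(s)\delta)\,\pi(s)\xi = (\gamma \mid \Delta_G(s))\,\pi(s) U_\gamma \xi$, whence $U_\gamma \pi(s) U_\gamma^* = (\gamma \mid \Delta_G(s))\,\pi(s)$. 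Theorem~\ref{thm:reps_induced_from_kernel} then gives $(\pi,\H) \cong \Ind_{\ker \Delta_G}^G(\pi_1,\H_1)$ where, from its proof, $\H_1 = p_1 \H = \ker(D - d_0)$ and $\pi_1 = \pi|_{\ker \Delta_G}$ acting on $\H_1$. That $\pi_1$ is irreducible follows because $\Ind_{\ker \Delta_G}^G$ is additive and carries nonzero representations of the open subgroup $\ker \Delta_G$ to nonzero representations, so a nontrivial splitting of $\pi_1$ would split $\pi$. That $\pi_1$ is square integrable follows because $\pi \hookrightarrow \lambda_G$ restricts to $\pi|_{\ker \Delta_G} \hookrightarrow \lambda_G|_{\ker \Delta_G}$, and $\lambda_G$ restricted to the open subgroup $\ker \Delta_G$ is unitarily equivalent to a multiple of $\lambda_{\ker \Delta_G}$; since $\pi_1$ is a subrepresentation of $\pi|_{\ker \Delta_G}$, it embeds into a multiple of $\lambda_{\ker \Delta_G}$, and being irreducible, into $\lambda_{\ker \Delta_G}$ itself.

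Finally I would identify $d_0$ with $d_{\pi_1}$. Fix a unit vector $\xi \in \H_1 = \ker(D - d_0)$. The coefficient $c_{\xi,\xi}(s) = \langle \pi(s)\xi,\xi\rangle$ vanishes off $\ker \Delta_G$, since $\pi(s)\xi$ lies in the $d_0 \Delta_G(s)$-eigenspace of $D$, which is orthogonal to $\H_1$ unless $\Delta_G(s) = 1$; and its restriction to $\ker \Delta_G$ is precisely the coefficient $n \mapsto \langle \pi_1(n)\xi,\xi\rangle$ of $(\pi_1,\H_1)$. Because $\ker \Delta_G$ is open, $\mu_G|_{\B(\ker \Delta_G)}$ is a Haar measure on $\ker \Delta_G$ by Theorem~\ref{thm:when_is_plancherel_weight_almost_periodic}, so the two $L^2$-norms agree. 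Comparing the orthogonality relation (\ref{eqn:formal_degree_operator_equation}) for $G$ at $\xi_1 = \xi_2 = \eta_1 = \eta_2 = \xi$, which equals $\langle D^{-1/2}\xi, D^{-1/2}\xi\rangle\langle\xi,\xi\rangle = d_0^{-1}$, with the corresponding relation for the unimodular group $\ker \Delta_G$ (whose formal degree operator is $d_{\pi_1}\,1$ with respect to $\mu_G|_{\B(\ker \Delta_G)}$), which equals $d_{\pi_1}^{-1}$, gives $d_0 = d_{\pi_1}$. Hence $D = \sum_{\delta \in \Delta_G(G)}(d_{\pi_1}\delta)\, 1_{\{d_{\pi_1}\delta\}}(D)$.

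The step I expect to be the main obstacle is the diagonalization of $D$: recognizing that cosets of $\Delta_G(G)$ in $(0,\infty)$ furnish $\pi(G)$-invariant spectral sets, and that irreducibility forces all the spectral mass of $D$ onto a single such coset. The genuine subtlety there appears only when $G$ is not second countable (so $\Delta_G(G)$ may be uncountable and cosets need not be Borel), where atomicity must be extracted from the $\sigma$-compact supports of the scalar spectral measures rather than from a single countable Borel coset. Everything downstream — constructing $U$, applying Theorem~\ref{thm:reps_induced_from_kernel}, verifying irreducibility and square integrability of $\pi_1$, and the orthogonality bookkeeping for $d_{\pi_1}$ — is comparatively routine.
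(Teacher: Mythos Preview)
Your route is genuinely different from the paper's and would be attractive if it worked: you attempt to diagonalize $D$ directly by spectral analysis, then feed the eigenprojections into Theorem~\ref{thm:reps_induced_from_kernel} to obtain the induction. The paper instead first cites \cite[Proposition~7]{DM76} to get $(\pi,\H)\cong\Ind_{\ker\Delta_G}^G(\pi_1,\H_1)$, uses the induced decomposition $\H=\bigoplus_\delta \pi(\sigma(\delta))\H_1$ to write down an explicit diagonal semi-invariant operator $\widetilde D=\sum_\delta \delta\,p_\delta$, and then invokes the uniqueness of positive semi-invariant operators on an irreducible representation \cite[Lemma~1]{DM76} to conclude $D=c\,\widetilde D$. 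Everything you do \emph{after} the diagonalization---building $U$, applying Theorem~\ref{thm:reps_induced_from_kernel}, checking irreducibility and square integrability of $\pi_1$, and computing $d_0=d_{\pi_1}$ via the orthogonality relations---is correct and essentially matches the paper's endgame.

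The gap is in the diagonalization step, but not where you think. The sentence ``Since the cosets partition $(0,\infty)$ and $D\neq 0$, exactly one coset $C_0$ has $1_{C_0}(D)=1$'' yields \emph{at most} one such coset (by disjointness) but not \emph{at least} one: the partition of $(0,\infty)$ into $\Delta_G(G)$-cosets is uncountable, so countable additivity of the spectral measure does not apply. Nothing in your argument rules out the possibility that the spectral measure class of $D$ is non-atomic yet ergodic under the $\Delta_G(G)$-scaling action, in which case every individual coset---being countable---satisfies $1_C(D)=0$. Such ergodic non-atomic measure classes exist for this action in the abstract (pass to logarithms and one has a countable subgroup of $\R$ acting by translation), so irreducibility and semi-invariance alone cannot exclude them. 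This issue is already present when $G$ is second countable; the Borel-ness of cosets that you flag for the general case is a red herring. To close the gap you must use square integrability more substantively, and the cleanest route is exactly the paper's: obtain the induced decomposition first via \cite[Proposition~7]{DM76}, build $\widetilde D$, and compare with $D$ using \cite[Lemma~1]{DM76}.
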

\begin{proof} Denote $G_1 : = \ker\Delta_G$ and let $\sigma\colon \Delta_G(G)\to G$ be a fixed normalized section for $\Delta_G$. The fact that $(\pi,\H)$ is induced by a representation $(\pi_1,\H_1)$ of $G_1$ follows from \cite[Proposition 7]{DM76}, and the irreducibility of $(\pi_1,\H_1)$ is standard (see, for example,  \cite[Corollary 2.43]{KT13}). Toward showing this representation is square integrable, let us identify $(\pi,\H)\cong \Ind_{G_1}^G(\pi_1,\H_1)$ so that one has
    \[
        \H = \bigoplus_{\delta \in \Delta_G(G)} \pi(\sigma(\delta))\H_1.
    \]
As in the proof of Theorem~\ref{thm:reps_induced_from_kernel}, set $p_\delta := [ \pi(\sigma(\delta)) \H_1]$. Recalling that (\ref{eqn:comutation_relation_of_P_lambda_and_pi(G)}) holds, it follows that
    \[
        \widetilde{D}:= \sum_{\delta\in \Delta_G(G)} \delta p_\delta
    \]
is semi-invariant with weight $\Delta_G^{-1}$, and therefore $D=c \widetilde{D}$ for some positive constant $c>0$ by \cite[Lemma 1]{DM76}. As this implies $\H_1 \subset \dom(D^{-1/2})$, we see that for any $\xi,\eta\in \H_1\setminus\{0\}$ one has
    \[
        0<\int_{G_1} |\<\pi(t)\xi,\eta\>|^2\ d\mu_{G}(t) \leq \int_G |\<\pi(s)\xi,\eta\>|^2\ d\mu_{G}(s) = \| c_{\xi,\eta}\|_{L^2(\mu_G)}^2<\infty.
    \]
(In fact, from (\ref{eqn:comutation_relation_of_P_lambda_and_pi(G)})
the two integrals are equal since $p_1 \pi(s) p_1 = 1_{G_1}(s) p_1 \pi(s) p_1$ for all $s\in G$.) Hence $(\pi_1,\H_1)$ is square integrable.

Finally, the claimed formula for $D$ will follow from showing the constant in $D= c \widetilde{D}$ is the formal degree $d_{\pi_1}$ of $(\pi_1,\H_1)$. Let $\xi,\eta\in \H_1$ be unit vectors. Then (\ref{eqn:formal_degree_operator_equation}) implies
    \[
        \frac{1}{c} = \| D^{-\frac12} \xi\| \|\eta\| = \int_G |\<\pi(s) \xi, \eta\>|^2\ d\mu_G(s) = \int_{G_1} |\<\pi(t) \xi, \eta\>|^2\ d\mu_G(t),
    \]
where the last equality follows from (\ref{eqn:comutation_relation_of_P_lambda_and_pi(G)}) as discussed above. This last expression equals $1/d_{\pi_1}$ by definition of the formal degree (see, for example, \cite[Section 3.3.a]{GdlHJ89}), and so $c=d_{\pi_1}$.
\end{proof}

\begin{rem}
Observe that the proof of the previous theorem implies that for fixed $\xi\in \H_1\setminus\{0\}$, the map
    \begin{align*}
        v\colon \H &\to L^2(G)\\
            \eta &\mapsto \frac{1}{d_{\pi_1}^{1/2} \|\xi\|} c_{\xi,\eta}
    \end{align*}
defines an isometry satisfying $v\pi(s) = \lambda_G(s) v$ for all $s\in G$. Moreover, for $\eta\in \pi(\sigma(\delta)) \H_1$, since $\pi(s)^*\eta \in \H_1$ if and only if $\Delta_G(s) = \delta$, it follows that $v p_\delta = 1_{\{\delta\}}(\Delta_G)v$ and therefore $vD = d_{\pi_1} \Delta_G$.$\hfill\blacksquare$
\end{rem}

\subsection{Factorial square integrable representations}
Recall that a representation $(\pi,\H)$ of a locally compact group $G$ is said to be \emph{factorial} (or a \emph{factor representation}) if $\pi(G)''$ is a factor. In \cite{Ros78}, Rosenberg defined the square integrability of such a representation as the existence of vectors $\xi,\eta \in \H$ whose coefficient $c_{\xi,\eta}(s)=\<\pi(s)\xi,\eta\>$ defines a non-zero element of $L^2(G)$. By \cite[Proposition 2.3.(a)]{Ros78}, this is equivalent to $(\pi,\H)$ being quasi-equivalent to a subrepresentation of the left regular representation of $G$; that is, $\pi(G)'' \cong L(G)p$ for some projection $p\in R(G)$. For $G$ second countable, Moore showed in \cite{Moo77} that this is further equivalent to the existence of a semi-invariant weight $\psi$ of degree $\Delta_G$ on the von Neumann algebra $\pi(G)''$ such that there exists some non-zero $x \in \dom(\psi)$ satisfying $s \mapsto \psi(\pi(s)x)$ is a square integrable function on $G$ (see \cite[Theorem 3]{Moo77}). Here, a non-zero normal semifinite weight $\psi$ on $\pi(G)''$ is said to be \emph{semi-invariant} of degree $\Delta_G$ if
    \[
        \psi(\pi(s) x\pi(s)^*) = \Delta_G(s) \psi(x) \qquad s \in G,\ x\in (\pi(G)'')_+.
    \]
By \cite[Proposition 2.1 and Theorem 1]{Moo77}, $\psi$ is faithful and unique up to scaling. By \cite[Theorem 4]{Moo77}, one can always normalize $\psi$ to satisfy
    \[
        \int_G \psi(\pi(s)x)\overline{\psi(\pi(s) y)}\ d\mu_{G}(s) = \psi(y^*x) \qquad x,y\in \dom(\psi),
    \]
and in this case $\psi$ is called the \emph{formal degree} of $(\pi,\H)$.

Note that for $G$ unimodular, the formal degree is a tracial weight. For almost unimodular groups, we have the following.

\begin{thm}\label{thm:formal_degree_is_almost_periodic}
Let $(\pi, \H)$ be a factorial square integrable representation of a second countable almost unimodular group $G$. Then the formal degree $\psi$ of $(\pi,\H)$ is almost periodic with $(\pi(G)'')^\psi=\pi(\ker{\Delta_G})''$.
\end{thm}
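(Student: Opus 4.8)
The plan is to reduce this to the almost periodicity of the Plancherel weight established in Theorem~\ref{thm:when_is_plancherel_weight_almost_periodic} by using the concrete model for $(\pi,\H)$ coming from square integrability. Since $(\pi,\H)$ is factorial and square integrable, it is quasi-equivalent to a subrepresentation of $\lambda_G$; so after identifying $\pi(G)''$ with $L(G)p$ for a suitable projection $p\in R(G)$, the formal degree $\psi$ should be recognizable as (a scalar multiple of) the restriction/compression of the Plancherel weight $\varphi_G$ to $L(G)p$. More precisely, I would first invoke Moore's structure theory (\cite{Moo77}) together with the normalization convention for $\psi$ to pin down $\psi$ as the unique semi-invariant weight of degree $\Delta_G$ satisfying the $L^2$-orthogonality relation, and then observe that $\varphi_G$ compressed to $L(G)p$ also satisfies these defining properties (semi-invariance of $\varphi_G$ is Remark~\ref{rem:Plancherel_weight_is_semi-invariant_weight_Delta_G}), so by uniqueness up to scaling the two weights agree up to a positive constant.

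The key steps, in order: (1) Use square integrability to fix an isometry (or, after quasi-equivalence, a realization) intertwining $\pi$ with a subrepresentation of $\lambda_G$, giving $\pi(G)''\cong L(G)p$ with $p\in R(G)=L(G)'$ a projection; under this identification $\pi(s)\mapsto \lambda_G(s)p$. (2) Identify $\psi$ with the compression $(\varphi_G)_p\colon x p\mapsto \varphi_G(xp)$ (suitably interpreted), up to a scalar, by checking it is semi-invariant of degree $\Delta_G$ and satisfies Moore's $L^2$-normalization, then applying the uniqueness clause of \cite[Proposition 2.1, Theorem 1]{Moo77}. (3) Transport the known modular data: since $\varphi_G$ is almost periodic by Theorem~\ref{thm:when_is_plancherel_weight_almost_periodic} (using that $G$ is almost unimodular), its modular automorphism group is $\sigma_t^{\varphi_G}(\lambda_G(f))=\int_G \Delta_G(s)^{it}\lambda_G(s)f(s)\,d\mu_G(s)$, and this passes to the compression $L(G)p$ because $p$ lies in the centralizer of $\varphi_G$ (indeed $p\in R(G)\subset L(G)'$, so $\sigma_t^{\varphi_G}$ restricts to the modular group of $(\varphi_G)_p$ on $L(G)p$). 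Hence $\sigma^\psi$ is, up to the scalar, this same diagonalizable flow, so $\Delta_\psi$ is diagonalizable, i.e. $\psi$ is almost periodic. (4) Compute the centralizer: $x p\in (L(G)p)^{(\varphi_G)_p}$ iff $\sigma_t^{\varphi_G}(x)p = xp$ for all $t$, which by the formula for $\sigma^{\varphi_G}$ and the computation in the proof of Theorem~\ref{thm:when_is_plancherel_weight_almost_periodic} happens exactly when $x\in L(G)^{\varphi_G}=\{\lambda_G(s):s\in\ker{\Delta_G}\}''=L(\ker{\Delta_G})$; translating back through the identification $\pi(s)\mapsto\lambda_G(s)p$ gives $(\pi(G)'')^\psi=\pi(\ker{\Delta_G})''$.

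The main obstacle I anticipate is Step~(2): carefully matching $\psi$ with the compressed Plancherel weight. One must be attentive to normalization (Moore's $\psi$ is normalized via the $L^2$-orthogonality relation, while $\varphi_G$ has its own normalization from the Haar measure), to the precise meaning of ``compression of a weight by a projection in the commutant'' and why it is faithful normal semifinite on $L(G)p$, and to the fact that square integrability gives quasi-equivalence rather than an honest subrepresentation, so one may need to pass to an amplification $\lambda_G^{\oplus\infty}$ and a projection in $R(G)\bar\otimes B(\ell^2)$ — though amplification does not affect almost periodicity or the centralizer computation, since tensoring with $B(\ell^2)$ (with its trace-induced weight being tracial, hence contributing nothing to the modular structure) commutes with everything in sight. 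Once the identification of $\psi$ with a scalar multiple of the compressed $\varphi_G$ is in hand, Steps~(3) and~(4) are essentially bookkeeping built on Theorem~\ref{thm:when_is_plancherel_weight_almost_periodic}. An alternative that sidesteps some of this: work directly with the isometry $v$ from the remark following Theorem~\ref{thm:sq_int_irreps} in the factorial analogue, intertwining $\pi$ and $\lambda_G$, and pull back $\varphi_G$ along $v$; but the uniqueness-of-semi-invariant-weight argument is the cleanest route and I would lead with it.
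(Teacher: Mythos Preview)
Your proposal is essentially the paper's approach: realize $\pi(G)''$ inside $L(G)$ via square integrability, pull back $\varphi_G$, verify semi-invariance (via Remark~\ref{rem:Plancherel_weight_is_semi-invariant_weight_Delta_G}) and invoke Moore's uniqueness to identify with $\psi$ up to a scalar, then read off almost periodicity and the centralizer from Theorem~\ref{thm:when_is_plancherel_weight_almost_periodic}.

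The one point where the paper is sharper is exactly the obstacle you flag in Step~(2). Your formulation ``$(\varphi_G)_p\colon xp\mapsto \varphi_G(xp)$'' is ill-posed since $p\in R(G)$ need not lie in $L(G)$, and your justification ``$p$ lies in the centralizer of $\varphi_G$ (indeed $p\in R(G)\subset L(G)'$)'' conflates the centralizer $L(G)^{\varphi_G}\leq L(G)$ with the commutant $L(G)'$. The paper resolves this cleanly by passing to the central support $z$ of $p$ in $R(G)$: then $z\in R(G)\cap L(G)\subset L(G)^{\varphi_G}$ genuinely is a projection in the centralizer, the isomorphism $\pi(G)''\cong L(G)p\cong L(G)z$ sends $\pi(s)\mapsto \lambda_G(s)z$, and $\varphi_G|_{L(G)z}$ is immediately a faithful normal almost periodic weight with $(L(G)z)^{\varphi_G}=L(G)^{\varphi_G}z=L(\ker\Delta_G)z$. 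With this fix your Steps~(3) and~(4) go through verbatim, and no amplification is needed.
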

\begin{proof}
By \cite[Proposition 2.3.(a)]{Ros78}, $\pi(G)''\cong L(G)p$ for some projection $p\in R(G)$, where $\pi(s)\mapsto \lambda_G(s) p$ for all $s\in G$. Letting $z$ be the central support of $p$ in $R(G)$, there exists a further isomorphism $\theta\colon \pi(G)'' \to L(G)z$ satisfying $\theta(\pi(s)) = \lambda_G(s)z$ for all $s\in G$. Now $z\in R(G)\cap L(G)\subset L(G)^{\varphi_G}$ implies that the restriction of a Plancherel weight $\varphi_G$ to $L(G)z$ is a faithful normal almost periodic weight (see \cite[Remark 1.5]{GGLN25}). Thus $\psi:=\varphi_G\circ \theta$ is a faithful normal almost periodic weight on $\pi(G)''$ with
    \[
        \left(\pi(G)''\right)^\psi = \theta^{-1}\left( (L(G)z)^{\varphi_G} \right) = \theta^{-1}\left( L(G)^{\varphi_{G}} z\right)= \theta^{-1}(L(\ker{\Delta_G})z) = \pi(\ker{\Delta_G})''.
    \]
Additionally, by Remark~\ref{rem:Plancherel_weight_is_semi-invariant_weight_Delta_G} we have
    \[
        \psi(\pi(s) x \pi(s)^*) = \varphi_G( \lambda_G(s) \theta(x) \lambda_G(s)^*) = \Delta_G(s) \varphi_G( \theta(x)) = \Delta_G(s) \psi(x)
    \]
for all $s\in G$ and $x\in \pi(G)''_+$. Rescaling if necessary (which does not affect either the almost periodicity or the centralizer), it follows from \cite[Theorem 1]{Moo77} that $\psi$ is the formal degree of $(\pi,\H)$.
\end{proof}

We now prove our second main theorem.

\begin{thm}[{Theorem~\ref{introthm:B}}]\label{thm:sq_int_factor_reps}
Let $G$ be a second countable almost unimodular group and let $(\pi_1, \H_1)$ be a factorial square integrable representation of $\ker\Delta_G$. Then $\emph{Ind}_{\ker \Delta_G}^G(\pi_1,\H_1)$ is also factorial and square integrable.
\end{thm}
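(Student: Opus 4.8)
The plan is to induce not just the representation but the whole Murray--von Neumann structure, thereby reducing factoriality of the induced representation to factoriality of $\pi_1(\ker{\Delta_G})''$ via the almost periodicity machinery. Denote $G_1 := \ker{\Delta_G}$ and let $\sigma\colon \Delta_G(G)\to G$ be a normalized section. First I would recall from Theorem~\ref{thm:formal_degree_is_almost_periodic} and its proof that $(\pi_1,\H_1)$ being factorial and square integrable means $\pi_1(G_1)''\cong L(G_1)q$ for some projection $q$ in $R(G_1)$, and since $L(G_1)\cong L(G)^{\varphi_G}$ by Theorem~\ref{thm:when_is_plancherel_weight_almost_periodic}, this places $\pi_1(G_1)''$ inside the world governed by the centralizer $L(G)^{\varphi_G}$. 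The key point is that the left regular representation of $G_1$ induces the left regular representation of $G$ (as noted in the introduction after Theorem~\ref{introthm:B}), so inducing a subrepresentation of $\lambda_{G_1}$ should yield a subrepresentation of $\lambda_G$ (up to multiplicity); in other words $\Ind_{G_1}^G(\pi_1,\H_1)$ is quasi-equivalent to a subrepresentation of $\lambda_G^{\oplus\infty}$, which by \cite[Proposition 2.3.(a)]{Ros78} is exactly the square integrability half of the conclusion.

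For the factoriality half, the cleanest route is to show $\Ind_{G_1}^G(\pi_1,\H_1)$ carries a semi-invariant weight whose centralizer is $\pi_1(G_1)''$, then invoke the almost periodic principle that a factor centralizer forces the ambient algebra to be a factor. Concretely: set $(\pi,\H):=\Ind_{G_1}^G(\pi_1,\H_1)$, realized on $\ell^2(\Delta_G(G))\otimes\H_1$ with the projections $p_\delta$ onto $\pi(\sigma(\delta))\H_1$ as in the proof of Theorem~\ref{thm:reps_induced_from_kernel}, so that $p_{\Delta_G(s)\delta}=\pi(s)p_\delta\pi(s)^*$ and $p_1\pi(s)p_1=1_{G_1}(s)p_1\pi(s)p_1$. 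Let $\psi_1$ be the formal degree of $(\pi_1,\H_1)$ on $\pi_1(G_1)''=p_1\pi(G)''p_1$ and define $\psi$ on $\pi(G)''_+$ by $\psi(x):=\sum_{\delta\in\Delta_G(G)}\Delta_G(\sigma(\delta))\,\psi_1\big(\pi(\sigma(\delta))^*\,p_\delta x p_\delta\,\pi(\sigma(\delta))\big)$, i.e. the induced amplification of $\psi_1$. A direct computation using the covariance of the $p_\delta$ and the semi-invariance of $\psi_1$ (degree $\Delta_{G_1}\equiv 1$) should give $\psi(\pi(s)x\pi(s)^*)=\Delta_G(s)\psi(x)$, so $\psi$ is semi-invariant of degree $\Delta_G$ on $\pi(G)''$. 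Because $\psi_1$ is almost periodic with $(\pi_1(G_1)'')^{\psi_1}=\pi_1(G_1)''$ (it is a tracial weight on the factor $\pi_1(G_1)''$, which is a factor by hypothesis), the modular operator of $\psi$ is diagonalized by the $p_\delta$ with eigenvalues $\Delta_G(G)$, exactly as in formula~(\ref{eqn:diagonalization_mod_op}); hence $\psi$ is almost periodic with $(\pi(G)'')^\psi=\pi(G_1)''$. Since $\pi(G_1)''\supset\pi_1(G_1)''$ acting diagonally is a factor and $\psi$ is almost periodic, \cite[Theorem 10]{Con72} gives $(\pi(G)'')^\psi{}'\cap\pi(G)''=(\pi(G)'')^\psi{}'\cap(\pi(G)'')^\psi$, and because the centralizer is a factor this intersection is trivial, forcing $\pi(G)''$ to be a factor. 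Finally, the existence of a non-zero $x\in\dom(\psi)$ with $s\mapsto\psi(\pi(s)x)$ square integrable follows by taking $x$ supported in the $p_1$-corner coming from an element of $\dom(\psi_1)$ witnessing square integrability of $\pi_1$; by \cite[Theorem 3]{Moo77} this re-confirms square integrability of $(\pi,\H)$, and after rescaling $\psi$ becomes its formal degree.

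The main obstacle I anticipate is making the induced weight $\psi$ rigorously well-defined and normal: one must check that the formula above does not depend on the choice of section $\sigma$ up to the scaling ambiguity, that it is a genuine normal semifinite weight (the sum over the infinite index set $\Delta_G(G)$ needs the monotone convergence / normality argument), and that $\psi$ is non-zero and semifinite — this is where the square integrability of $\pi_1$ is essential, since it guarantees $\dom(\psi_1)$ is large enough. A secondary technical point is verifying that $\Ind_{G_1}^G$ genuinely sends subrepresentations of $\lambda_{G_1}$ to subrepresentations of $\lambda_G^{\oplus\infty}$; this is induction in stages together with $\Ind_{G_1}^G\lambda_{G_1}\cong\lambda_G$, which is standard but should be cited carefully (e.g. via Mackey's theory, using second countability). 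Once these foundational points are in place, the factoriality conclusion is essentially forced by the almost periodicity of $\psi$ and Connes' theorem, mirroring precisely the remark in the introduction that $L(G_1)\cong L(G)^{\varphi_G}$ being a factor implies $L(G)$ is a factor.
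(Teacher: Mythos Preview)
Your square integrability argument is fine and matches the paper's approach: realize $(\pi_1,\H_1)$ spatially as a compression of $\lambda_{G_1}\otimes 1$, then use $\Ind_{G_1}^G\lambda_{G_1}\cong\lambda_G$ to exhibit the induced representation as a compression of $\lambda_G\otimes 1$.

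Your factoriality argument, however, has a genuine gap. You assert that the centralizer $(\pi(G)'')^\psi=\pi(G_1)''$ is a factor, but this is false in general. The restriction $\pi|_{G_1}$ decomposes as $\bigoplus_{\delta}\pi_1\circ\Ad(\sigma(\delta)^{-1})$, and these conjugate representations need not be quasi-equivalent to $\pi_1$; when they are not, $\pi(G_1)''$ has non-trivial center. The paper's Remark immediately following this theorem addresses exactly this point: for $(\pi,\H)$ irreducible one gets $\pi(G_1)''\cong\pi_1(G_1)''\otimes\ell^\infty(\Delta_G(G))$, which is never a factor for non-unimodular $G$. So Connes' theorem $(\pi(G)'')^\psi{}'\cap\pi(G)''=(\pi(G)'')^\psi{}'\cap(\pi(G)'')^\psi$ only tells you $Z(\pi(G)'')\subset Z(\pi(G_1)'')$, which is not enough.

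The paper circumvents this by working on the commutant side. After identifying $\pi(G)''$ with $(L(G)\otimes\C)p$ for $p\in qR(G_1)q\bar\otimes B(\K)$, it proves a general lemma: if $N\leq M$ with $Z(M)\subset Z(N)$ and the central support $z_N$ of a projection $r$ in $N$ is minimal in $Z(N)$, then its central support $z_M$ in $M$ is minimal in $Z(M)$ (any central subprojection $z\leq z_M$ satisfies $zz_N\in\{0,z_N\}$, and then $z_M=\bigvee_u uz_Nu^*$ forces $z\in\{0,z_M\}$). Applying this with $N=qR(G_1)q\bar\otimes B(\K)$ and $M=qR(G)q\bar\otimes B(\K)$, the hypothesis $Z(M)\subset Z(N)$ follows from $L(G)\cap R(G)\subset L(G_1)\cap R(G_1)$ (which is Connes' theorem applied to the Plancherel weight, not to your $\psi$), and minimality of $z_N$ is exactly the factoriality of $\pi_1(G_1)''$. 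This gives factoriality of $(L(G)\otimes\C)p\cong\pi(G)''$ without ever needing $\pi(G_1)''$ to be a factor.
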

\begin{proof} Denote $G_1:= \ker\Delta_G$. By \cite[Proposition 2.3.(a)]{Ros78}, $\pi_1(G_1)''\cong L(G_1)q$ for some projection $q\in R(G_1)$, where $\pi(t)\mapsto \lambda_{G_1}(t) q$ for all $t\in G_1$. Although this isomorphism need not be spatial, by \cite[Theorem IV.5.5]{Tak02} there exists an ancillary Hilbert space $\K$ and an isometry $v\colon \H_1\to qL^2(G_1)\otimes \K$ satisfying $v\pi_1(t)=(\lambda_{G_1}(t)\otimes 1) v$ for all $t\in G_1$. Set $p:=vv^* \in qR(G_1)q \bar\otimes B(\mathcal{K})$. Thus, $(\pi_1, \H_1)$ and $ ((\lambda_{G_1}\otimes 1)p, p( L^2(G_1) \otimes \K))$ are unitarily equivalent. Consequently, $\text{Ind}_{G_1}^G(\pi_1,\H_1)$ and $\text{Ind}^G_{G_1}\left((\lambda_{G_1} \otimes 1)p, p(L^2(G_1) \otimes \K)\right)$ are unitarily equivalent.

Next we show that the above representations of $G$ are also unitarily equivalent to the representation $\left( (\lambda_G\otimes 1)p, p(L^2(G)\otimes \mathcal{K}) \right)$ of $G$. We identify $R(G_1)\leq R(G) = L(G)'$ and fix a normalized section $\sigma\colon \Delta_G(G)\to G$  for $\Delta_G$ and define a unitary $W \colon p(L^2(G) \otimes \K)\to \ell^2(\Delta_G(G))\otimes  p(L^2(G_1) \otimes \K)$ by
    \[
        [W pf](\delta) = p(1_{G_1} \otimes 1)(\lambda_G(\sigma(\delta))^* \otimes 1)f \qquad f\in L^2(G)\otimes \K\, , \delta\in \Delta_G(G).
    \]
Then $W$ intertwines $(\lambda_G \otimes 1)p$ and $\text{Ind}_{G_1}^G\left((\lambda_{G_1} \otimes 1)p\right)$. Indeed, for $s \in G$, $\delta \in \Delta_G(G)$, and $f \in L^2(G) \otimes \K$ we have
    \begin{align*}
        [\text{Ind}_{G_1}^G((\lambda_{G_1} \otimes 1)p)(s)Wpf](\delta) &= (\lambda_{G_1}(\sigma(\delta)^{-1}s\sigma(\Delta_G(s)^{-1}\delta)) \otimes 1)p [(Wpf)](\Delta_G(s)^{-1}\delta)\\
        &= (\lambda_{G_1}(\sigma(\delta)^{-1}s\sigma(\Delta_G(s)^{-1}\delta)) \otimes 1)p(1_{G_1} \otimes 1)(\lambda_G(\sigma(\Delta_G(s)^{-1}\delta))^* \otimes 1)f\\
        &=p(1_{G_1} \otimes 1)(\lambda_G(\sigma(\delta)^{-1}s) \otimes1 )f\\
        &=[Wp(\lambda_G(s) \otimes1 )f](\delta),
    \end{align*}
where the second-to-last equality follows by $p$ and $1_{G_1}\otimes 1$ commuting with $\lambda_{G_1}\otimes 1$. Thus, we set $(\pi,\H):=\Ind_{G_1}^G(\pi_1,\H_1)$ and we have a spatial isomorphism of $\pi(G)''\cong (L(G)\otimes \C)p$ that carries $\pi(s)$ to $(\lambda_G(s)\otimes 1)p$ for all $s\in G$. 

We shall show that $(L(G)\otimes \C)p$ is a factor by showing that the central support of $p$ in $qR(G)q \bar\otimes B(\mathcal{K})$ is minimal in the center. Towards that end, we need the following claim and provide its proof for completeness, though it may be well known to experts. 

Let $N \leq M$ be an inclusion of von Neumann algebras with $M\cap M' \subset N\cap N'$, and let $U$ a generating subgroup of the unitaries of $M$. Given a projection $r$ in $N$, we let $z_N$ and $z_M$ denote the central support of $r$ in $N$ and $M$, respectively. We claim that if $z_N$ is a minimal projection in $N \cap N'$, then $z_M$ is a minimal projection in $M \cap M'$. Indeed, for any central subprojection $z$ of $z_M$ in $M \cap M'$, we have that the central subprojection $zz_N \in N\cap N'$ of $z_N$ is zero or $z_N$ by the minimality of $z_N$. Thus either $z uz_Nu^*$ is equal to zero or $uz_Nu^*$ for all $u \in U$. Since 
    \[
        z_M = \bigvee_{u \in U} uz_Nu^*,
    \]
we have that $z$ is equal to zero or $z_M$. Thus $z_M$ is minimal in $M \cap M'$ as claimed. 

Since $\varphi_G$ is almost periodic and $L(G_1) \cong L(G)^{\varphi_G}$, we have $L(G) \cap R(G) \subset L(G_1) \cap R(G_1)$ (see \cite[Theorem 10]{Con72}). It follows that
    \[
        (R(G) \bar\otimes B(\K)) \cap (L(G) \otimes \C) = (R(G) \cap L(G)) \otimes \C \subset (R(G_1) \cap L(G_1)) \otimes \C = (R(G_1) \bar\otimes B(\K)) \cap (L(G_1) \otimes \C).
    \]
Additionally, this still holds if we compress by $q$: 
    \[
        (qR(G)q \bar\otimes B(\K)) \cap (L(G)q \otimes \C) \subset  (qR(G_1)q \bar\otimes B(\K)) \cap (L(G_1)q \otimes \C).
    \]    
We have that the central support of $p$ in $qR(G_1)q \bar\otimes B(\K)$ is minimal in the center since $\pi_1(G_1)$ is a factor. By the above claim, the central support $z$ of $p$ in $qR(G)q \bar\otimes B(\K)$ is minimal in the center and so $(L(G)\otimes \C)p \cong (L(G)\otimes \C)z $ is a factor. That is $(\pi, \H)$ is a factor representation. Furthermore, following the proof of Theorem \ref{thm:sq_int_factor_reps}, we can define a formal degree $\psi$ on $\pi(G)''$. Hence $(\pi, \H)$ is a factorial square integrable representation.
\end{proof}

\begin{rem}
For a factorial square integrable representation $(\pi_1, \H_1)$ of $\ker\Delta_G$, let $(\pi,\H) = \text{Ind}_{\ker \Delta_G}^G(\pi_1,\H_1)$. By Theorem~\ref{thm:formal_degree_is_almost_periodic}, $\pi(G)''$ admits an almost periodic formal degree $\psi$ with centralizer given by $\pi(\ker\Delta_G)''$. One might hope that $\pi(\ker\Delta_G)''\cong \pi_1(\ker\Delta_G)''$ (i.e. that $\pi|_{\ker\Delta_G}$ and $\pi_1$ are quasi-equivalent), so that $\psi$ is an extremal weight. However, this need not be the case. For example, if $(\pi,\H)$ is irreducible, then it is known that $\psi = \Tr(D^{-1/2} \cdot D^{-1/2})$ where $D$ is the formal degree operator of $(\pi,\H)$ (see \cite[Theorem 2]{Moo77}). In this case, the centralizer is isomorphic to $\pi_1(\ker\Delta_G)''\otimes \ell^\infty(\Delta_G(G))$. More generally, for $p$ as in the proof of Theorem~\ref{thm:sq_int_factor_reps}, one will have $\pi(\ker\Delta_G)''$ is factor if and only if the central support of $p$ in $R(\ker\Delta_G)\bar\otimes B(\K)$ is central in $R(G)\bar\otimes B(\K)$.$\hfill\blacksquare$
\end{rem}

\section{Group von Neumann Algebra Properties}\label{sec:group_vNas}

In this section, we study the group von Neumann algebra $L(G)$ associated to an almost unimodular group $G$. In the first subsection, we establish some alternate presentations of the basic construction associated to the inclusion $L(\ker{\Delta_G})\leq L(G)$ (see Theorem~\ref{thm:basic_construction_crossed_product}), which we use to relate the factoriality of $L(\ker{\Delta_G})$ and $L(G)$. The results of this subsection will also be used in the final subsection, where we prove Theorem~\ref{introthm:D} (see Theorem~\ref{thm:formula}). In the intermediate subsection, we show all intermediate von Neumann algebras $L(\ker{\Delta_G})\leq P\leq L(G)$ are of the form $P=L(H)$ for some closed intermediate group $\ker{\Delta_G}\leq H\leq G$ if and only if $L(\ker{\Delta_G})$ is a factor (see Theorem~\ref{thm:intermediate_algebras})

Throughout this section, $(\cdot\mid\cdot)$ will always denote dual pairings between locally compact abelian groups, we will make use of the following terminology. For a locally compact group $G$, let $\hat{\iota}\colon \R\to \Delta_G(G)\hat{\ }$ be the map dual to the inclusion map $\iota\colon \Delta_G(G) \hookrightarrow \R_+$; that is,
    \begin{equation}\label{eqn:transpose_formula}
        (\hat{\iota}(t)\mid \delta) = ( t \mid  \iota(\delta)) = \delta^{it} \qquad t\in \R,\ \delta\in \Delta_G(G).
    \end{equation}
If $G$ is almost unimodular so that $\varphi_G$ is almost periodic, then the modular automorphism group $\sigma^{\varphi_G}\colon \R\curvearrowright L(G)$ admits an extension $\alpha\colon \Delta_G(G)\hat{\ } \curvearrowright L(G)$ satisfying $\alpha_{\hat{\iota}(t)} = \sigma_t^{\varphi_G}$ for all $t\in \R$ (see \cite[Section 1.4]{GGLN25} for more details). We will refer to this action of $\Delta_G(G)\hat{\ }$ as the \emph{point modular extension} of the modular automorphism group.

\subsection{The basic construction and factoriality}

For an almost unimodular group $G$, the strict semifiniteness of a Plancherel weight $\varphi_G$ (guaranteed by Theorem~\ref{thm:when_is_plancherel_weight_almost_periodic}) implies that $L^2(L(\ker{\Delta_G}),\varphi_G)$ forms a non-trivial closed subspace of $L^2(L(G),\varphi_G)$. Letting $e_{\varphi_G}$ denote the projection onto this subspace, the von Neumann algebra $\<L(G),e_{\varphi_G}\>$ generated by $L(G)$ and $e_{\varphi_G}$ is the basic construction for the inclusion $L(\ker{\Delta_G})\leq L(G)$. We begin by showing this von Neumann algebra has several natural presentations, and in the proof it will be helpful to recall that $e_{\varphi_G}$ simply corresponds to $1_{\ker{\Delta_G}}$ under the usual identification $L^2(G,\varphi_G)\cong L^2(G)$. Note also that the following theorem partially resolves a question of Sutherland from \cite{Sut78} (see Remark (ii) following the proof of Theorem 3.1).

\begin{thm}\label{thm:basic_construction_crossed_product}
Let $G$ be an almost unimodular group, let $\varphi_G$ be a Plancherel weight on $L(G)$, and let $\alpha\colon \Delta_G(G)\hat{\ }\curvearrowright L(G)$ be the point modular extension of $\sigma^{\varphi_G}\colon \R\curvearrowright L(G)$. Then one has
    \[
        \<L(G), e_{\varphi_G}\> \cong L(G)\rtimes_\alpha  \Delta_G(G)\hat{\ } \cong L(\ker{\Delta_G})\bar\otimes B(\ell^2\Delta_G(G)).
    \]
\end{thm}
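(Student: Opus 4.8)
The plan is to prove the two isomorphisms in turn, with the second one doing most of the work. For the first, $\langle L(G),e_{\varphi_G}\rangle\cong L(G)\rtimes_\alpha\widehat{\Delta_G(G)}$, I would invoke the general description of the basic construction for strictly semifinite almost periodic weights from \cite{GGLN25}: if $\varphi$ is such a weight on a von Neumann algebra $M$, then the basic construction $\langle M,e_\varphi\rangle$ of the inclusion $M^\varphi\leq M$ is naturally isomorphic to $M\rtimes_\alpha\widehat{\Gamma}$, where $\Gamma\leq\R_+$ is the subgroup generated by $\Sd(\varphi)$ and $\alpha$ is the point modular extension of $\sigma^\varphi$. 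By Theorem~\ref{thm:when_is_plancherel_weight_almost_periodic}, $\varphi_G$ is strictly semifinite and almost periodic, $\Sd(\varphi_G)=\Delta_G(G)$ is already a subgroup of $\R_+$, and $L(G)^{\varphi_G}=L(\ker{\Delta_G})$; the asserted identification follows. One can also see this directly: on $L^2(L(G),\varphi_G)\cong L^2(G)$ one has $e_{\varphi_G}=1_{\ker{\Delta_G}}$, so $\langle L(G),e_{\varphi_G}\rangle$ is the commutant of $J_{\varphi_G}L(\ker{\Delta_G})J_{\varphi_G}=\{\rho_G(t):t\in\ker{\Delta_G}\}''$, and matching this with the crossed product is routine.

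It then remains to prove $L(G)\rtimes_\alpha\widehat{\Delta_G(G)}\cong L(\ker{\Delta_G})\bar\otimes B(\ell^2(\Delta_G(G)))$. Write $Q:=\Delta_G(G)$, $K:=\widehat{Q}$, and $\mathcal{M}:=L(G)\rtimes_\alpha K$. The canonical copy of $L(K)$ in $\mathcal{M}$ is isomorphic to $\ell^\infty(Q)$, with minimal projections $p_\delta:=\int_K\overline{(\gamma\mid\delta)}\,\lambda_K(\gamma)\,d\gamma$ for $\delta\in Q$, summing strongly to $1$. The covariance relation $\lambda_K(\gamma)\pi_\alpha(x)\lambda_K(\gamma)^*=\pi_\alpha(\alpha_\gamma(x))$ combined with $\alpha_\gamma(\lambda_G(s))=(\gamma\mid\Delta_G(s))\lambda_G(s)$ gives $\pi_\alpha(\lambda_G(s))\,p_\delta\,\pi_\alpha(\lambda_G(s))^*=p_{\Delta_G(s)\delta}$ for all $s\in G$. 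Fixing a normalized section $\sigma\colon Q\to G$ for $\Delta_G$ as in Example~\ref{ex:cocycle_realization_of_almost_unimodular_groups}, the unitaries $\pi_\alpha(\lambda_G(\sigma(\delta)))$ then witness $p_\delta\sim p_1$ for every $\delta\in Q$; since these projections are unitarily conjugate to $p_1$ and sum to $1$, the projection $p_1$ has full central support in $\mathcal{M}$. Next, $p_1=\int_K\lambda_K(\gamma)\,d\gamma$ is the projection onto the $K$-invariant vectors, so the corner $p_1\mathcal{M}p_1$ is canonically isomorphic, via the conditional expectation $x\mapsto\int_K\alpha_\gamma(x)\,d\gamma$, to the fixed-point algebra $L(G)^\alpha$; and $L(G)^\alpha=L(\ker{\Delta_G})$ because $\alpha$ extends $\sigma^{\varphi_G}$ while each $\lambda_G(t)$ with $t\in\ker{\Delta_G}$ is $\alpha$-fixed. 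As $\{p_\delta\}_{\delta\in Q}$ is a family of mutually orthogonal, mutually equivalent projections summing to $1$, the standard matrix-unit argument yields $\mathcal{M}\cong p_1\mathcal{M}p_1\,\bar\otimes\,B(\ell^2(Q))\cong L(\ker{\Delta_G})\,\bar\otimes\,B(\ell^2(\Delta_G(G)))$, completing the proof.

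The crux of the argument, and the only genuinely non-formal point, is the equivalence $p_\delta\sim p_1$ for all $\delta\in Q$. For a general von Neumann algebra with an almost periodic weight this can fail, because a point of $\Sd(\varphi)$ need not be the eigenvalue of a \emph{unitary} eigenoperator of $\Delta_\varphi$, in which case the $p_\delta$ are only equivalent in a weaker sense and one does not recover $B(\ell^2)$. For $L(G)$ this is exactly what almost unimodularity provides: $\lambda_G(\sigma(\delta))$ is a unitary eigenoperator for each $\delta\in\Delta_G(G)$. The remaining ingredients are routine---the corner computation $p_1\mathcal{M}p_1\cong L(G)^\alpha$, the amplification, and (given \cite{GGLN25}) the first isomorphism. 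If one wished to avoid the \cite{GGLN25} machinery, the most labor-intensive alternative would be to re-derive the first isomorphism from the concrete model $L^2(G)\cong\ell^2(\Delta_G(G))\otimes L^2(\ker{\Delta_G})$ determined by $\sigma$, under which $e_{\varphi_G}\mapsto e_{11}\otimes 1$, $\lambda_G(\ker{\Delta_G})\mapsto 1\otimes L(\ker{\Delta_G})$, and $\rho_G(\ker{\Delta_G})$ maps into a block-diagonal algebra whose commutant can be computed by hand.
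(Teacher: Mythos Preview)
Your proof is correct, but it takes a somewhat different route from the paper's, particularly for the second isomorphism.

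For the first isomorphism, the paper does not cite a general result from \cite{GGLN25}; instead it works on the commutant side. It implements $\alpha$ spatially via $U_\gamma=\sum_\delta(\gamma\mid\delta)1_{\Delta_G^{-1}(\{\delta\})}$, observes that $\langle L(G),e_{\varphi_G}\rangle=L(G)\vee\{U_\gamma\}''$, defines the companion action $\beta\colon K\curvearrowright R(G)$, and then invokes Haga's criterion \cite[Theorem 2.2]{Hag76}: to get $\langle L(G),e_{\varphi_G}\rangle\cong L(G)\rtimes_\alpha K$ it suffices that $p_1$ have full central support in $R(G)\rtimes_\beta K$, which follows from $\rho_G(s)p_1\rho_G(s)^*=p_{\Delta_G(s)}$. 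This is essentially the same ``full central support via unitary eigenoperators'' observation you make, but deployed on the commutant side and packaged through \cite{Hag76} rather than through \cite{GGLN25}.

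For the second isomorphism, the approaches genuinely diverge. The paper identifies $G\cong\Delta_G(G)\sltimes{(\beta,c)}\ker\Delta_G$ via Example~\ref{ex:cocycle_realization_of_almost_unimodular_groups}, obtains $L(G)\cong L(\ker\Delta_G)\rtimes_{(\check\alpha,\check c)}\Delta_G(G)$, recognizes $\alpha$ as the dual action, and applies cocycle Takesaki duality \cite[Theorem 2]{NS79} to conclude. Your argument is more elementary: you stay inside $\mathcal{M}=L(G)\rtimes_\alpha K$, exhibit $\{p_\delta\}_{\delta\in Q}$ as a complete system of mutually equivalent abelian projections (via the unitaries $\pi_\alpha(\lambda_G(\sigma(\delta)))$), compute the corner $p_1\mathcal{M}p_1\cong L(G)^\alpha=L(\ker\Delta_G)$, and finish with the matrix-unit amplification. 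Your route avoids the cocycle-duality machinery entirely and makes the role of the unitary eigenoperators (the point you correctly flag as the crux) completely transparent; the paper's route is shorter once one is willing to quote \cite{NS79} and situates the result within the Takesaki duality framework, which is conceptually illuminating. Both proofs ultimately hinge on the same fact---that $\Delta_G(G)=\Sd(\varphi_G)$ is witnessed by \emph{unitary} eigenoperators---but they exploit it in different places.
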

\begin{proof}
Denote $K:=\Delta_G(G) \hat{\ }$ for convenience. For each $\gamma\in K$, define
    \[
        U_\gamma:= \sum_{\delta\in \Delta_G(G)} (\gamma\mid \delta) 1_{\Delta_G^{-1}(\{\delta\})}.
    \]
Then, one has $\alpha_\gamma(x) = U_\gamma x U_\gamma^*$ for all $\gamma\in K$ and $x\in L(G)$, and
    \[
        \< L(G), e_{\varphi_G}\> = L(G)\vee \{U_\gamma\colon \gamma\in K\}''.
    \]
Additionally, $\beta_\gamma(y):= U_\gamma y U_\gamma^*$ defines an action $\beta\colon K \curvearrowright R(G)$ satisfying $\beta_\gamma(\rho_G(s)) = \overline{(\gamma\mid \Delta_G(s))} \rho_G(s)$ for all $\gamma\in K$ and $s\in G$. Consider the crossed product $R(G)\rtimes_\beta K$ and the family of projections
    \[
        p_\delta:= \int_{K} (\gamma\mid \delta) \lambda_K(\gamma)\ d\mu_K(\gamma) \qquad \delta\in \Delta_G(G),
    \]
where $\mu_K$ is the unique Haar measure on $K$ satisfying $\mu_K(K)=1$. By \cite[Theorem 2.2]{Hag76}, to establish the first claimed isomorphism it suffices to show that $p_1$ has full central support in $R(G)\rtimes_\beta K$. Observe that $\lambda_K(\gamma) \rho_G(s) \lambda_K(\gamma)^* = \overline{(\gamma\mid\Delta_G(s))} \rho_G(s)$ implies $\rho_G(s)\lambda_K(\gamma) \rho_G(s)^* = (\gamma\mid \Delta_G(s)) \lambda_K(\gamma)$. Consequently,
    \[
        \rho_G(s) p_1 \rho_G(s)^* = \int_K (\gamma\mid \Delta_G(s)) \lambda_K(\gamma)\ d\mu_K(\gamma) = p_{\Delta_G(s)}.
    \]
Hence the central support of $p_1$ is at least $\sum_\delta p_\delta = 1$.

For the second isomorphism, we recall that we can identify
    \[
        G \cong \Delta_G(G) \sltimes{(\beta,c)}\ker{\Delta_G}
    \]
for some cocycle action $(\beta,c)\colon \Delta_G(G)\curvearrowright \ker{\Delta_G}$ (see Example~\ref{ex:cocycle_realization_of_almost_unimodular_groups}). Defining $(\check{\alpha},\check{c})\colon \Delta_G(G) \curvearrowright L(\ker{\Delta_G})$ by 
    \[
       \check{\alpha}_\delta\left(\lambda_{G}(s)\right) := \lambda_{G}(\beta_\delta(s)) \qquad \text{ and } \qquad \check{c}(\delta_1, \delta_2) : = \lambda_{G}(c(\delta_1,\delta_2)),
    \]
it follows that
    \[
        L(G)\cong L(\ker{\Delta_G}) \rtimes_{(\check{\alpha},\check{c})} \Delta_G(G)
    \]
and $\alpha$ is dual to $(\check{\alpha},c)$ (see \cite[Proposition 3.1.7]{Sut80}). Thus 
    \[
        L(G)\rtimes_\alpha K \cong \left[ L(\ker{\Delta_G}) \rtimes_{(\check{\alpha},\check{c})} \Delta_G(G) \right] \rtimes_\alpha K \cong L(\ker{\Delta_G}) \bar\otimes B(\ell^2 \Delta_G(G))
    \]
by \cite[Theorem 2]{NS79}.
\end{proof}


\begin{cor}\label{cor:factoriality}
For an almost unimodular group $G$, the following are equivalent:
    \begin{enumerate}[label=(\roman*)]
        \item $L(\ker{\Delta_G})$ is a factor;
        \item $L(G)$ is a factor and $z\lambda_G(s)z\neq 0$ for all $s\in G$ and non-zero projections $z\in L(\ker{\Delta_G})'\cap L(\ker{\Delta_G})$.
    \end{enumerate}
In this case, $L(G)$ is: semifinite if $\Delta_G(G)=\{1\}$; type $\mathrm{III}_\lambda$ if $\Delta_G(G)=\lambda^\Z$ for some $0< \lambda <1$; and type $\mathrm{III}_1$ if $\Delta_G(G)$ is dense in $\R_+$.
\end{cor}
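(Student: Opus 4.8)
The plan is to extract everything from three ingredients already in hand: Connes' theorem that an almost periodic weight $\varphi$ on a von Neumann algebra $M$ satisfies $(M^\varphi)'\cap M=(M^\varphi)'\cap M^\varphi$ \cite{Con72}; the identification $L(G)^{\varphi_G}\cong L(\ker{\Delta_G})$ from Theorem~\ref{thm:when_is_plancherel_weight_almost_periodic}, with $\varphi_G$ restricting there to the Plancherel \emph{trace} of the unimodular group $\ker{\Delta_G}$; and the presentation of the basic construction from Theorem~\ref{thm:basic_construction_crossed_product}. Write $M=L(G)$, $N=L(\ker{\Delta_G})=M^{\varphi_G}$, and $\Gamma=\Delta_G(G)$. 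Since $\ker{\Delta_G}\unlhd G$, conjugation by $\lambda_G(s)$ normalizes $N$ and so induces an action of $\Gamma\cong G/\ker{\Delta_G}$ on $Z(N)$; Connes' theorem gives $N'\cap M=Z(N)$, and a one-line argument then yields $Z(M)=Z(N)^{\Gamma}$, so $M$ is a factor if and only if $\Gamma$ acts ergodically on $\operatorname{Spec} Z(N)$. This is the frame into which I would fit both the equivalence and the type computation.

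For (i)$\Rightarrow$(ii): if $N$ is a factor then $Z(M)\subseteq N'\cap M=Z(N)=\C$, so $M$ is a factor, and the only non-zero projection in $N'\cap N=Z(N)$ is $1$, for which $1\cdot\lambda_G(s)\cdot 1=\lambda_G(s)\neq 0$. For (ii)$\Rightarrow$(i), I would assume $M$ is a factor (so $\Gamma$ acts ergodically) but, for contradiction, that $N$ is not, and then produce $s\in G$ and a non-zero projection $z\in Z(N)$ with $z\lambda_G(s)z=0$, contradicting the second clause of (ii). If $\Gamma=\{1\}$ then $M=N$ and there is nothing to prove, so fix $\delta_0\in\Gamma$ with $\delta_0>1$ and set $s=\sigma(\delta_0)$ for a section $\sigma$. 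For a projection $z\in Z(N)$, writing $w:=\lambda_G(s)z\lambda_G(s)^{*}\in Z(N)$ one has $z\lambda_G(s)z=(z\wedge w)\lambda_G(s)$, so it suffices to find $z$ with $z\wedge w=0$. By Remark~\ref{rem:Plancherel_weight_is_semi-invariant_weight_Delta_G}, conjugation by $\lambda_G(s)$ multiplies the trace $\tau:=\varphi_G|_N$ by $\delta_0$; identifying $Z(N)=L^{\infty}(X,\nu)$ with $\nu$ determined by $\tau$ and $T$ the induced transformation, this reads $\nu(TE)=\delta_0\,\nu(E)$. Choosing $E$ with $0<\nu(E)<\infty$ and putting $F:=E\setminus T^{-1}E$, one checks that $F$ is disjoint from its image and that $\nu(F)=\nu(E)-\delta_0^{-1}\nu(TE\cap E)>0$, the strict inequality because $\delta_0>1$ forces $\nu(TE\setminus E)>0$; thus $z:=1_F$ works. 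I expect the only real obstacle here to be the existence of such a finite-measure $E$, i.e.\ a non-zero projection of finite trace in $Z(N)$: this uses the semifiniteness of $\tau$ on $N$, but since $\tau|_{Z(N)}$ itself may be purely infinite (for instance when $N$ is a $\mathrm{II}_\infty$ factor and $Z(N)=\C$), in the residual case one instead runs a marker-type/wandering-set argument inside $N\,\bar\otimes\,B(\ell^{2}\Gamma)$, using ergodicity and $\Gamma\neq\{1\}$ to find a genuinely moved set, legitimised by the isomorphism $\langle M,e_{\varphi_G}\rangle\cong N\,\bar\otimes\,B(\ell^{2}\Gamma)$ of Theorem~\ref{thm:basic_construction_crossed_product}.

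For the type of $L(G)$, assume $N$ is a factor. Since $\ker{\Delta_G}$ is unimodular, $\tau=\varphi_G|_N$ is a faithful normal semifinite \emph{trace}, so $N$ is a semifinite factor; if $\Gamma=\{1\}$ then $M=N$ is semifinite and we are done. If $\Gamma\neq\{1\}$, then $\varphi_G$ is a non-tracial almost periodic weight on the factor $M$ with $\Sd(\varphi_G)=\Gamma$ (Theorem~\ref{thm:when_is_plancherel_weight_almost_periodic}) whose centralizer $M^{\varphi_G}=N$ is a semifinite factor; by Connes' analysis of almost periodic weights \cite{Con72,Con74}, such a weight computes the invariant, giving $S(M)\cap\R_+=\overline{\Gamma}$. (Alternatively, by Example~\ref{ex:cocycle_realization_of_almost_unimodular_groups} and the proof of Theorem~\ref{thm:basic_construction_crossed_product}, $M$ is a cocycle crossed product $N\rtimes\Gamma$ of the semifinite factor $N$ by a group scaling its trace through the inclusion $\Gamma\hookrightarrow\R_+$, and the structure theory of such crossed products gives the same conclusion.) Since every subgroup of $\R_+$ is trivial, of the form $\lambda^{\Z}$ with $0<\lambda<1$, or dense, and since a semifinite factor has $S(M)=\{1\}$ (forcing $\Gamma=\{1\}$), it follows that $M$ is of type $\mathrm{III}_{\lambda}$ when $\Gamma=\lambda^{\Z}$ and of type $\mathrm{III}_{1}$ when $\Gamma$ is dense, as claimed.
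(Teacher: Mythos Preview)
Your (i)$\Rightarrow$(ii) and the type computation are essentially the same as the paper's (the paper cites \cite[Theorem 3.3.1]{Con73} to get $\Delta_G(G)\subset\S(L(G))$ from the trace-scaling unitaries $\lambda_G(s)$, which is exactly what your ``Connes' analysis'' remark unwinds to).

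Your (ii)$\Rightarrow$(i), however, takes a genuinely different route from the paper and has a gap. The paper does not argue by producing a wandering projection; instead it reads the hypothesis ``$z\lambda_G(s)z\neq 0$ for all $s$ and all central $z$'' as saying that the Arveson spectrum of the restricted action $\alpha^z\colon \Delta_G(G)\hat{\ }\curvearrowright zL(G)z$ is all of $\Delta_G(G)$ for every non-zero central $z$, so that the Connes spectrum is $\Gamma(\alpha)=\Delta_G(G)$. By \cite[Corollary XI.2.8]{Tak03} the crossed product $L(G)\rtimes_\alpha\Delta_G(G)\hat{\ }$ is then a factor, and Theorem~\ref{thm:basic_construction_crossed_product} identifies this with $L(\ker{\Delta_G})\bar\otimes B(\ell^2\Delta_G(G))$, forcing $L(\ker{\Delta_G})$ to be a factor. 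This uses the factoriality of $L(G)$ only implicitly (it is needed for the Connes spectrum criterion) and never touches ergodicity on $Z(N)$ or measure theory.

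The gap in your argument is exactly where you flag it: when $\tau|_{Z(N)}$ fails to be semifinite there is no measure $\nu$ on $\operatorname{Spec}Z(N)$ coming from $\tau$, so the set $E$ with $0<\nu(E)<\infty$ need not exist, and your ``residual case'' is not actually carried out. Fortunately your overall strategy is sound and the trace is unnecessary: once you know the $\Gamma$-action on $Z(N)$ is ergodic and $Z(N)\neq\C$, pick any projection $0\neq p\neq 1$ in $Z(N)$ and $\gamma\in\Gamma$ with $\gamma\cdot p\neq p$. If $z:=p\wedge(1-\gamma\cdot p)\neq 0$ then $\gamma\cdot z\leq \gamma\cdot p$ while $z\leq 1-\gamma\cdot p$, so $z\perp\gamma\cdot z$; otherwise $p\leq\gamma\cdot p$ strictly and $w:=(\gamma\cdot p)\wedge(1-p)\neq 0$ satisfies $w\perp\gamma^{-1}\cdot w$. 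Either way you obtain the desired $z$ and $s$ without any appeal to $\tau$ or to $N\bar\otimes B(\ell^2\Gamma)$.
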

\begin{proof}
Throughout we fix a Plancherel weight $\varphi_G$ on $L(G)$.\\

\noindent\textbf{($\Rightarrow$):} Identifying $L(\ker{\Delta_G})\cong L(G)^{\varphi_G}$ by Theorem~\ref{thm:when_is_plancherel_weight_almost_periodic}, we see that the factoriality of $L(G)$ follows from $L(G)'\cap L(G)\subset L(\ker{\Delta_G})'\cap L(G)=\C$ (see \cite[Theorem 10]{Con72}). Also, the condition $z\lambda_G(s)z\neq 0$ holds all $s\in G$ and the only non-trivial central projection $z=1$ in $L(\ker{\Delta_G})$.\\

\noindent\textbf{($\Leftarrow$):} Let $\alpha\colon \Delta_G(G)\hat{\ }\curvearrowright L(G)$ be the point modular extension of $\sigma^{\varphi_G}$. Then our assumption implies that for all $\delta\in \Delta_G(G)$ there exists a non-zero operator $x\in zL(G)z$ satisfying $\alpha_\gamma(x) = (\gamma\mid \delta) x$ for all $\gamma\in \Delta_G(G)\hat{\ }$. Consequently, the Arveson spectrum of the restricted action $\alpha^z\colon \Delta_G(G)\hat{\ }\curvearrowright zL(G)z$ is $\Delta_G(G)$ (see \cite[Lemma XI.1.12]{Tak03}). It follows that the Connes spectrum is $\Gamma(\alpha)= \Delta_G(G)$ and hence $L(G)\rtimes_{\alpha} \Delta_G(G)\hat{\ }$ is a factor (see \cite[Lemma XI.2.2 and Corollary XI.2.8]{Tak03}). Using Theorem~\ref{thm:basic_construction_crossed_product}, we can identify this crossed product with $L(\ker{\Delta_G}) \bar\otimes B(\ell^2\Delta_G(G))$ and see that $L(\ker{\Delta_G})$ must be a factor.\\

\noindent For the final observation, it suffices to show that the modular spectrum $\S(L(G))$ is given by the closure of $\Delta_G(G)$ in $[0,+\infty)$ (see \cite[Theorem XII.1.6]{Tak03}). First note that $\S(L(G)) \subset \overline{\Delta_G(G)}$ since the latter set gives the spectrum of $\Delta_{\varphi_G}$ by Theorem~\ref{thm:when_is_plancherel_weight_almost_periodic}. For the reverse inclusion, note that each $\lambda_G(s)$ normalizes $L(\ker{\Delta_G})$ and satisfies $\varphi_G(\lambda_G(s)\,\cdot\, \lambda_G(s)^*) = \Delta_G(s) \varphi_G$ for $s \in G$. Consequently $\Delta_G(G) \in \S(L(G))$ by \cite[Theorem 3.3.1]{Con73}.
\end{proof}

\begin{exs}\label{ex:interesting_group_vNas}
    \begin{enumerate}
    \item[]

    \item For a prime number $p\in \N$, let $G_p:=\Q_p^\times \ltimes \Q_p$ be as in Example~\ref{exs:first_examples}.(\ref{ex:p-adic_ax+b_group}) so that $\ker{\Delta_{G_p}}=\Z_p^\times \ltimes \Q_p$ and $\Delta_{G_p}(G_p)=p^\Z$. It is known that $L(G_p)$ is a type $\mathrm{I}_\infty$ factor (see the discussion at the beginning of \cite[Section 3]{Bla77}), whereas $L(\ker{\Delta_{G_p}})\cong L(\Q_p)\rtimes \Z_p^\times$ is not a factor since, for example, $\lambda_{\Q_p}(1_{\Z_p})$ is non-trivial central element. (Alternatively, the factoriality of $L(\ker{\Delta_{G_p}})=L(G_p)^{\varphi_{G_p}}$ would imply $\varphi_{G_p}$ is tracial by \cite[Remark 1.1]{GGLN25} contradicting $G_p$ being non-unimodular.) Thus the final statement in Corollary~\ref{cor:factoriality} can fail without factoriality of $L(\ker{\Delta_G})$.

    Now, let $(p_n)_{n \in \N}$ be a sequence of (not necessarily distinct) primes. For each $n \in \N$,  consider the compact open subgroup $K_{p_n} := \Z_{p_n}^\times \ltimes \Z_{p_n} \leq G_{p_n}$. Recall from Example~\ref{ex:interesting_group_vNas}.(\ref{ex:restricted_direct_products}) that the restricted direct product
        \[
            G := \prod_{n \in \N}(G_{p_n}, K_{p_n})
        \]
    is an almost unimodular group. By \cite[Theorem 4.1]{Bla77}, $L(G)$ is a type $I$ factor if $\sum_{n \in \N} p_n^{-1} < \infty$ or a type $\mathrm{III}$ ITPFI factor if $\sum_{n \in \N} p_n^{-1}=\infty$. In particular, if $p_n = p$ for all $n \in \N$ then $L(G)$ is type $\mathrm{III}_{\frac1p}$ (see \cite[Theorem 4.2]{Bla77}), and if $p_n$ denotes the $n$th prime then $L(G)$ is type $\mathrm{III}_1$ (see \cite[Theorem 4.4]{Bla77} and \cite[Theorem 2.10]{BZ00}). In the former case, the modular automorphism group $\sigma^{\varphi_G}$ is $\frac{2\pi}{\log(p)}$-periodic and consequently $L(\ker{\Delta_G})\cong L(G)^{\varphi_G}$ is a factor by \cite[Theorem 4.2.6]{Con73}. In the latter case, and more generally when the sequence $(p_n)_{n\in \N}$ consists of distinct primes, one has
        \[
            \ker{\Delta_G} = \prod_{n\in \N} ( \ker{\Delta_{G_{p_n}}}, K_{p_n}),
        \]
    and consequently $L(\ker{\Delta_G})$ is not factor.

    \item\label{ex:matrix_action_non-injective} Let $\alpha \colon \text{GL}_2(\R) \curvearrowright \R^2$ be the action by matrix multiplication. Consider a countable intermediate subgroup $\text{SL}_2(\Z)\leq H\leq \text{GL}_2(\R)$. Restricting $\alpha$ to $H$, for $G:= H \sltimes{\alpha} \R^2$ we have
        \[
            \Delta_{G}(G) = |\det(H)| \qquad \text{ and }\qquad \ker{\Delta_G} = (H \cap \text{det}^{-1}(\{ \pm1\})) \sltimes{\alpha} \R^2.
        \]
    Thus $G$ is almost unimodular by either of Propositions~\ref{prop:short_exact_sequences} or \ref{prop:cocycle_semidirect_products}. Since the action of $H\cap \text{det}^{-1}(\{ \pm1\})$ on $\R^2$ is essentially free and ergodic---inheriting the former property from the action of $\text{SL}_2(\R)$ and the latter from $\text{SL}_2(\Z)$ (see \cite[Corollary of Theorem 6]{Moo66})---it follows from \cite[Theorem XIII.1.7]{TakIII} that $L(\ker{\Delta_{G}})$ is a separable type $\mathrm{II}_\infty$ factor. Moreover, $L(\ker{\Delta_G})$ is non-injective since it contains a copy of $L(\text{SL}_2(\Z))$. By Corollary~\ref{cor:factoriality} we have that $L(G)$ is a separable non-injective factor of type: $\mathrm{II}_\infty$ if $\det(H)=\{1\}$;  $\mathrm{III}_\lambda$ if $\det(H)=\lambda^\Z$ for some $0< \lambda <1$; and $\mathrm{III}_1$ if $\det(H)$ is dense in $\R_+$. The special cases $H=\text{GL}_2(\Q)$ and $H=\text{SL}_2(\Q)\vee \lambda^\Z$ were previously considered by Godement in \cite{God51} and Sutherland in \cite[Section 5]{Sut78}, respectively.
    
    \item\label{ex:matrix_action_injectivity} Let $\text{UT}_2(\R)$ denote the upper triangular matrices with real entries and let $\alpha \colon \text{UT}_2(\R) \curvearrowright \R^2$ be the action by matrix multiplication. Consider a countable intermediate subgroup $\text{N}_2(\Q)\leq H\leq \text{UT}_2(\R)$, where $\text{N}_2(\Q)=\text{UT}_2(\R) \cap \text{SL}_2(\Q)$. Restricting $\alpha$ to $H$, for $G:= H \sltimes{\alpha} \R^2$ we have
        \[
            \Delta_{G}(G) = |\det(H)| \qquad \text{ and }\qquad \ker{\Delta_{G}} = (H \cap \text{det}^{-1}(\{ \pm1\})) \sltimes{\alpha} \R^2.
        \]
    Thus $G$ is an almost unimodular group by either of Propositions~\ref{prop:short_exact_sequences} or \ref{prop:cocycle_semidirect_products}. Using that $\text{N}_2(\Q)$ acts ergodically on $\R^2$ (see the proof of \cite[Lemma 5.2]{Sut78}), it follows by the same argument as in the previous example that $L(\ker{\Delta_{G}})$ is a separable type $\mathrm{II}_\infty$ factor. Moreover, $H$ is solvable as a subgroup of $\text{UT}_2(\R)$, and so arguing as in the proof of \cite[Lemma 5.2]{Sut78} we see that $L(G)$, and hence $L(\ker{\Delta_G})$, is injective. Then by Corollary~\ref{cor:factoriality} we have that $L(G)$ is the unique separable injective factor of type: $\mathrm{II}_\infty$ if $\det(H)=\{1\}$; $\mathrm{III}_\lambda$ if $\det(H)=\lambda^\Z$ for some $0< \lambda <1$; and $\mathrm{III}_1$ if $\det(H)$ is dense in $\R_+$. The special cases $H=\text{UT}_2(\Q)$ and $H=\text{N}_2(\Q)\vee \lambda^\Z$ were previously considered by Sutherland in \cite[Section 5]{Sut78}. 

    \item For non-zero integers $m,n\in \Z\setminus \{0\}$, consider the Baumslag--Solitar group
        \[
            \text{BS}(m,n):=\<a,t\mid ta^m t^{-1} = a^n\>,
        \]
    and the commensurated subgroup $\<a\>$. Let $\text{G}(m,n)$ denote the \emph{relative profinite completion} (or \emph{Schlichting completion}) of $\text{BS}(m,n)$ with respect to $\<a\>$, which is defined as the closure of the representation of $\text{BS}(m,n)$ in $\text{Sym}(\text{BS}(m,n)/\<a\>)$ acting by left multiplication. These groups are totally disconnected by \cite{EW18} and hence almost unimodular by Example~\ref{exs:first_examples}.(\ref{ex:totally_disconnected}). One has $\Delta_{G(m,n)}(G(m,n)) = |\frac{m}{n}|^\Z$ (see \cite[Lemma 9.1]{Rau19}), and, in particular, $\Delta_{G(m,n)}(a)=1$ and $\Delta_{G(m,n)}(t)=\left|\frac{m}{n}\right|$. Consequently $\ker{\Delta_{G(m,n)}}$ is either all of $G(m,n)$ if $|n|=|m|$ and otherwise is the closure of the subgroup of $\text{BS}(m,n)$ consisting of words with the same number of $t$'s as $t^{-1}$'s. For $2\leq |m| < n$, we have that $L(G(m,n))$ is a non-injective type $\mathrm{III}_{|\frac{m}{n}|}$ factor by \cite[Theorem 9.2]{Rau19} (see also \cite{RauErr21} and \cite{Suz17}). In this case, one also has that the modular automorphism group $\sigma^{\varphi_{G(m,n)}}$ is $\frac{2\pi}{\log(|n|/|m|)}$-periodic and consequently $L(\ker{\Delta_{G(m,n)}})\cong L(G(m,n))^{\varphi_{G(m,n)}}$ is a factor by \cite[Theorem 4.2.6]{Con73}.$\hfill\blacksquare$
    \end{enumerate}
\end{exs}

Our last result in this subsection will be needed for our generalization of the Atiyah--Schmid formula in Theorem~\ref{thm:formula}.

\begin{thm}\label{thm:extending_reps_to_basic_construction}
Let $G$ be an almost unimodular group $G$ and fix a Plancherel weight $\varphi_G$ on $L(G)$. For a representation $(\pi, \H)$ of $L(G)$, the following are equivalent:
    \begin{enumerate}[label=(\roman*)]
        \item There is a representation $\widetilde{\pi}\colon \<L(G), e_{\varphi_G}\>\to B(\H)$ extending $\pi$.
        \item There exists a representation $U\colon \Delta_G(G)\hat{\ }\to U(\H)$ satisfying $U_\gamma \pi(\lambda_G(s)) U_\gamma^* = (\gamma \mid \Delta_G(s)) \pi(\lambda_G(s))$.
    \end{enumerate}
In particular, given either of the above representations, there is a unique other representation satisfying
    \begin{align}\label{eqn:Jones_projection_image}
        \widetilde{\pi}(e_{\varphi_G}) = \int_{\Delta_G(G) \hat{\ }} U_\gamma d\mu_{\Delta_G(G) \hat{\ }}(\gamma),
    \end{align}
where $\mu_{\Delta_G(G) \hat{\ }}$ is the unique Haar probability measure on $\Delta_G(G) \hat{\ }$.
\end{thm}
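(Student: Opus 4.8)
The plan is to deduce the statement from Theorem~\ref{thm:basic_construction_crossed_product}, which identifies $\<L(G),e_{\varphi_G}\>$ with $L(G)\rtimes_\alpha K$ for $K:=\Delta_G(G)\hat{\ }$ and $\alpha$ the point modular extension. I would work with the unitaries from the proof of that theorem, $V_\gamma:=\sum_{\delta\in\Delta_G(G)}(\gamma\mid\delta)\,1_{\Delta_G^{-1}(\{\delta\})}\in\<L(G),e_{\varphi_G}\>$, which satisfy $\alpha_\gamma(x)=V_\gamma x V_\gamma^{*}$ for $x\in L(G)$ and generate $\<L(G),e_{\varphi_G}\>$ together with $L(G)$. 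Three elementary observations about the $V_\gamma$ carry the argument: (1) $\gamma\mapsto V_\gamma$ is a strongly continuous homomorphism of $K$ into the unitary group of $B(L^2(G))$, by dominated convergence against the partition of unity $\{1_{\Delta_G^{-1}(\{\delta\})}\}_\delta$; (2) $V_\gamma\lambda_G(s)V_\gamma^{*}=(\gamma\mid\Delta_G(s))\lambda_G(s)$ for $s\in G$, a direct computation from the displayed formula (equivalently $\alpha_\gamma(\lambda_G(s))=(\gamma\mid\Delta_G(s))\lambda_G(s)$); and (3) $\int_K V_\gamma\,d\mu_K(\gamma)=1_{\ker{\Delta_G}}=e_{\varphi_G}$, where $\mu_K$ is the Haar probability measure on $K$ (i.e.\ $\mu_{\Delta_G(G)\hat{\ }}$), by orthogonality of the characters $\gamma\mapsto(\gamma\mid\delta)$.

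For (i)$\Rightarrow$(ii): given an extension $\widetilde{\pi}$, put $U_\gamma:=\widetilde{\pi}(V_\gamma)$. Since $\widetilde{\pi}$ is normal it is $\sigma$-weakly continuous on bounded sets, so by (1) the map $\gamma\mapsto U_\gamma$ is a strongly continuous unitary representation of $K$; applying $\widetilde{\pi}$ to (2) gives the covariance relation in (ii); and pushing the $\sigma$-weak integral in (3) through the normal map $\widetilde{\pi}$ yields $\widetilde{\pi}(e_{\varphi_G})=\int_K U_\gamma\,d\mu_K(\gamma)$, i.e.\ (\ref{eqn:Jones_projection_image}).

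For (ii)$\Rightarrow$(i): starting from $U$, first promote the covariance relation from the generators to all of $L(G)$: for each fixed $\gamma$, the set $\{x\in L(G):U_\gamma\pi(x)U_\gamma^{*}=\pi(\alpha_\gamma(x))\}$ is a $\sigma$-weakly closed unital $*$-subalgebra (using normality of $\pi$ and of $\alpha_\gamma$) containing every $\lambda_G(s)$, hence all of $L(G)$. Thus $(\pi,U)$ is a covariant representation of the $W^{*}$-dynamical system $(L(G),K,\alpha)$ with $\pi$ normal and $K$ compact, which integrates to a normal representation $\pi\rtimes U$ of $L(G)\rtimes_\alpha K\cong\<L(G),e_{\varphi_G}\>$ with $(\pi\rtimes U)|_{L(G)}=\pi$ and $(\pi\rtimes U)(V_\gamma)=U_\gamma$; set $\widetilde{\pi}:=\pi\rtimes U$, and (3), pushed through $\widetilde{\pi}$, again gives (\ref{eqn:Jones_projection_image}). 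Uniqueness in either direction is then immediate: $U$ is forced by $U_\gamma=\widetilde{\pi}(V_\gamma)$ for the fixed elements $V_\gamma\in\<L(G),e_{\varphi_G}\>$, and $\widetilde{\pi}$ is forced because a normal homomorphism on $\<L(G),e_{\varphi_G}\>$ is determined by its restriction to $L(G)$, equal to $\pi$, together with its value on $e_{\varphi_G}$, which is pinned down by (\ref{eqn:Jones_projection_image}).

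The delicate point is the clause that a covariant representation of $(L(G),K,\alpha)$ with $\pi$ normal integrates to a \emph{normal} representation of the crossed product. This genuinely uses that $K$ is compact, i.e.\ that $\Delta_G(G)$ is discrete; for non-discrete dual it fails, as the trivial action of $\Z$ on $\C$ shows, where $\C\rtimes\Z\cong L^\infty(\mathbb{T})$ and a unitary extends to a normal representation of $L^\infty(\mathbb{T})$ only if its spectral measure is absolutely continuous. When $K$ is compact the Peter--Weyl decomposition writes $U_\gamma=\sum_{\delta}(\gamma\mid\delta)\,p_\delta$ for an orthogonal partition of unity $p_\delta:=\int_K\overline{(\gamma\mid\delta)}\,U_\gamma\,d\mu_K(\gamma)$ indexed by the discrete group $\Delta_G(G)$, and these projections are exactly the data needed to represent the abelian part $\{V_\gamma:\gamma\in K\}''\cong\ell^\infty(\Delta_G(G))$ of the basic construction compatibly with $\pi$. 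So in the write-up I would either invoke a standard reference for $W^{*}$-crossed products by compact groups, or, if a self-contained argument is preferred, verify directly that the assignments $\lambda_G(s)\mapsto\pi(\lambda_G(s))$ and $1_{\Delta_G^{-1}(\{\delta\})}\mapsto p_\delta$ extend to a normal $*$-homomorphism on $\<L(G),e_{\varphi_G}\>$, using the relations between the $\lambda_G(s)$ and the $1_{\Delta_G^{-1}(\{\delta\})}$ underlying observation (2).
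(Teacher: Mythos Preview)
Your argument is correct, and the direction (i)$\Rightarrow$(ii), together with the uniqueness discussion, is essentially identical to the paper's---both set $U_\gamma:=\widetilde{\pi}(V_\gamma)$ (equivalently $\sum_\delta(\gamma\mid\delta)\,\widetilde{\pi}(1_{\Delta_G^{-1}(\{\delta\})})$) and read off (\ref{eqn:Jones_projection_image}) by integrating.

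The genuine difference is in (ii)$\Rightarrow$(i). You invoke Theorem~\ref{thm:basic_construction_crossed_product} to pass to $L(G)\rtimes_\alpha K$ and then integrate the covariant pair $(\pi,U)$, being careful (as you correctly flag) that compactness of $K$ is exactly what makes the integrated representation normal. The paper instead stays inside the basic-construction framework of \cite{GGLN25}: it restricts $U$ along $\hat\iota\colon\R\to K$ to obtain a one-parameter group $W_t$ implementing $\sigma^{\varphi_G}$, and then applies \cite[Proposition~2.4]{GGLN25}, which produces the extension once one checks that $\pi(L(G))\H^{W}$ is dense in $\H$. That density is verified via the same Peter--Weyl data $p_\delta$ you mention in your ``self-contained'' alternative, together with $\pi(\lambda_G(s))p_1\pi(\lambda_G(s))^{*}=p_{\Delta_G(s)}$. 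So your fallback plan is in fact the paper's actual argument; your primary route through crossed-product representation theory is a legitimately different and somewhat more structural proof, trading the black box \cite[Proposition~2.4]{GGLN25} for a black box about normal integration of covariant representations of compact group actions.
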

\begin{proof}
Once again we denote $K:= \Delta_G(G)\hat{\ }$ for convenience.\\

\noindent \textbf{(i)$\Rightarrow$(ii):} For each $\gamma\in K$ define a unitary on $\H$ by
    \[
        U_\gamma:= \sum_{\delta \in\Delta_G(G)} (\gamma\mid\delta) \widetilde{\pi}\left(1_{\Delta_G^{-1}(\{\delta\})}\right).
    \]
Then $\gamma\mapsto U_\gamma$ has the desired properties. In particular, if $\mu_K$ is the unique Haar probability measure on $K$ then for each $s\in G$ one has
    \begin{align*}
        \int_{K} U_\gamma\ d\mu_K(\gamma) &= \sum_{\delta\in \Delta_G(G)} \int_K (\gamma\mid\delta)\ d\mu_K(\gamma) \widetilde{\pi}\left(1_{\Delta_G^{-1}(\{\delta\})}\right)\\
            &= \widetilde{\pi}\left(1_{\Delta_G^{-1}(\{1\})}\right) = \widetilde{\pi}(e_{\varphi_G}).
    \end{align*}
If $V\colon K\to U(\H)$ is another representation satisfying the covariance condition and (\ref{eqn:Jones_projection_image}), then the latter implies its $1$-eigenspace is $\widetilde{\pi}(e_{\varphi_G})$. The other $\delta$-eigenspaces, for $\delta\in \Delta_G(G)$, are then determined by the covariance condition with $\pi$, which forces $V=U$.\\

\noindent \textbf{(ii)$\Rightarrow$(i):} We essentially follow the proof of \cite[Proposition 2.25]{GGLN25}, with unitarity of our eigenoperators replacing the need for factoriality. Let $\hat{\iota}\colon \R\to K$ be the transpose of the inclusion map $\iota\colon \Delta_G(G) \hookrightarrow \R_+$ defined by (\ref{eqn:transpose_formula}). Then $\R\ni t\mapsto W_t:= U_{\hat{\iota}(t)}$ defines a representation of $\R$ on $\H$ satisfying
    \[
        W_t \pi(\lambda_G(s)) W_t^* = \Delta_G(s)^{it} \pi(\lambda_G(s)) = \pi( \sigma_t^{\varphi_G}(\lambda_G(s))).
    \]
Using normality of conjugation by $W_t$ and the modular automorphism group of $\sigma_t^{\varphi_G}$, for fixed $t\in \R$, it follows that $W_t \pi(x) W_t^* = \pi(\sigma_t^{\varphi_G}(x))$ for all $x\in L(G)$. Thus by \cite[Proposition 2.4]{GGLN25}, it suffices to show that $\pi(L(G)) \H^W \leq \H$ is dense. Noting that $\H^W = \H^U$ by density of $\hat{\iota}(\R)\leq K$, it suffices to show $\pi(L(G)) \H^U$ is dense. Since $K$ is compact, for each $\gamma\in K$ we have
    \[
        U_\gamma = \sum_{\delta\in \Delta_G(G)} (\gamma\mid\delta) p_\delta
    \]
for a family of pairwise orthogonal projections $\{p_\delta \in B(\H)\colon \delta\in \Delta_G(G)\}$ that sum to one and are given explicitly by
    \[
        p_\delta = \int_K \overline{(\gamma \mid \delta)} U_\gamma\ d\mu_K(\gamma)
    \]
(see \cite[Theorem 4.44]{Fol16}). Using the assumed conjugation relation between $U_\gamma$ and $\pi(\lambda_G(s))$ one obtains
    \[
        \pi(\lambda_G(s)) p_1 \pi(\lambda_G(s))^* = p_{\Delta_G(s)}
    \]
for all $s\in G$ by arguing as in the proof of Theorem~\ref{thm:basic_construction_crossed_product}. Consequently,
    \[
        p_{\Delta_G(s)} \H =  \pi(\lambda_G(s)) p_1 \H = \pi(\lambda_G(s)) \H^U \subset \pi(L(G)) \H^U.
    \]
Since the $p_\delta$ sum to one, this establishes the needed density. Note that the representation $\widetilde{\pi}\colon \<L(G),e_{\varphi_G}\>\to B(\H)$ obtained from \cite[Proposition 2.4]{GGLN25} satisfies $\widetilde{\pi}(\Delta_{\varphi_G}^{it}) = W_t$, and consequently $\widetilde{\pi}(e_{\varphi_G})$ is given by the projection onto $\H^W=\H^U$, which is $p_1$. This also uniquely determines $\widetilde{\pi}$ since it extends $\pi$.
\end{proof}

\subsection{Intermediate von Neumann algebras}

Each closed intermediate group $\ker{\Delta_G} \leq H \leq G$ yields an intermediate von Neumann algebra $L(\ker{\Delta_G})\leq L(H)\leq L(G)$, and so it is natural to ask if this accounts for \emph{all} intermediate algebras. For second countable almost unimodular groups this turns out to be the case if (and only if) $L(\ker{\Delta_G})$ is a factor (see Theorem~\ref{thm:intermediate_algebras} below). Ultimately, this will follow from \cite{ILP98}, and so we must first witness $L(H)$ as the fixed point subalgebra of some minimal action. Toward this end, recall from Remark~\ref{rem:Galois_correspondence} that each intermediate group $\ker{\Delta_G}\leq H\leq G$ can be identified with
    \[
        H\cong \Delta_G(H) \sltimes{(\beta,c)} \ker{\Delta_G}
    \]
for a continuous cocycle action $(\beta,c)\colon \Delta_G(G)\curvearrowright \ker{\Delta_G}$. Letting $(\check{\alpha}, \check{c})\colon \Delta_G(G) \curvearrowright L(\ker{\Delta_G})$ be as in the proof of Theorem~\ref{thm:basic_construction_crossed_product}, one then has
    \[
        L(H)\cong L(\ker{\Delta_G}) \rtimes_{(\check{\alpha},\check{c})} \Delta_G(H).
    \]
Let $\alpha\colon \Delta_G(G)\hat{\ }\curvearrowright L(G)$ be the point modular extension of the modular automorphism group of $\varphi_G$. Then $\alpha$ is also the dual action to $(\check{\alpha}, \check{c})$, and if $\Delta_G(H)^\perp := \{\gamma\in \Delta_G(G)\hat{\ }\colon (\gamma\mid \delta)=1 \ \forall \delta\in \Delta_G(H)\}$ then it follows that
    \[
        L(H)\cong \{x\in L(G) \colon \alpha_\gamma(x) = x \ \forall \gamma\in \Delta_G(H)^\perp \} =: L(G)^{\Delta_G(H)^\perp}.
    \]
Conversely, for a closed subgroup $K\leq \Delta_G(G)\hat{\ }$ we have
    \begin{align}\label{eqn:fixed_point_crossed_product_duality}
        L(G)^K \cong L( K_\perp \sltimes{(\beta,c)} \ker{\Delta_G})
    \end{align}
where $K_\perp := \{\delta\in \Delta_G(G)\colon (\gamma\mid \delta)=1 \ \forall \gamma\in K\}$. Thus the problem of determining which intermediate algebras are of the form $L(H)$ is equivalent to determining which are of the form $L(G)^K$, and we first give a characterization of the latter in full generality.

\begin{prop}\label{prop:characterizing_fixed_point_intermediate_algebras}
Let $G$ be an almost unimodular group, let $\varphi_G$ be a Plancherel weight on $L(G)$, and let $\alpha\colon \Delta_G(G)\hat{\ }\curvearrowright L(G)$ be the point modular extension of the modular automorphism group of $\varphi_G$. For an $\alpha$-invariant intermediate von Neumann algebra $L(\ker{\Delta_G})\leq P\leq L(G)$, denote for each $\delta\in \Delta_G(G)$
    \[
        z_\delta:=\bigvee \{vv^*\colon v\in P \text{ partial isometry with }\alpha_\gamma(v) = (\gamma \mid \delta) v \ \forall \gamma\in \Delta_G(G)\hat{\ } \}.
    \]
Then one has $P=L(G)^K$ for some closed subgroup $K\leq \Delta_G(G)\hat{\ }$ if and only if $z_\delta = 1_{\Sd(\varphi_G|_P)}(\delta)$ for all $\delta\in \Delta_G(G)$. In this case, $\Sd(\varphi_G|_P)$ is a group and one has $K=\Sd(\varphi_G|_P)^\perp$.
\end{prop}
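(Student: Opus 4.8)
The plan is to analyze $P$ through the eigenspace decomposition of the compact abelian action $\alpha|_P$. Write $K:=\Delta_G(G)\hat{\ }$ and, for $\delta\in\Delta_G(G)$, set $P_\delta:=\{x\in P:\alpha_\gamma(x)=(\gamma\mid\delta)x\ \forall\gamma\in K\}$. Since $P\supseteq L(\ker{\Delta_G})=L(G)^{\varphi_G}$ is $\alpha$-invariant, $\varphi_G|_P$ is a faithful normal semifinite weight with $\sigma^{\varphi_G|_P}=\sigma^{\varphi_G}|_P$, hence almost periodic, with $\Sd(\varphi_G|_P)=\{\delta:P_\delta\neq 0\}$ and $P=\overline{\operatorname{span}}^{\,w}\bigcup_\delta P_\delta$ by Peter--Weyl. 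I will use throughout that every $\delta$-eigenvector of $L(G)$ has the form $wu_\delta$ with $w\in L(\ker{\Delta_G})$, where $u_\delta:=\lambda_G(\sigma(\delta))$ (for a fixed section $\sigma$ of $\Delta_G$) is a unitary $\delta$-eigenvector and $\operatorname{Ad}(u_\delta)$ restricts to an automorphism of $L(\ker{\Delta_G})$, since $\sigma(\delta)$ normalizes the normal subgroup $\ker{\Delta_G}$.

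For the forward implication, suppose $P=L(G)^{K_0}$ for a closed subgroup $K_0\leq K$. A $\delta$-eigenvector of $L(G)$ lies in $L(G)^{K_0}$ exactly when $\delta\in (K_0)_\perp$, so $P_\delta=L(G)_\delta$ if $\delta\in(K_0)_\perp$ and $P_\delta=0$ otherwise. As $u_\delta\in L(G)_\delta$ is unitary, this yields $z_\delta=1$ for $\delta\in(K_0)_\perp$ and $z_\delta=0$ otherwise; since $\Sd(\varphi_G|_P)=\{\delta:P_\delta\neq 0\}=(K_0)_\perp$ is a subgroup, we get $z_\delta=1_{\Sd(\varphi_G|_P)}(\delta)$, and $\Sd(\varphi_G|_P)^\perp=((K_0)_\perp)^\perp=K_0$ by Pontryagin duality (here $K_0$ is closed), which also shows such a $K_0$ is unique.

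For the converse, assume $z_\delta=1_{\Sd(\varphi_G|_P)}(\delta)$ for all $\delta$ and abbreviate $\Lambda:=\Sd(\varphi_G|_P)$. First I would show $\Lambda$ is a subgroup of $\Delta_G(G)$: it contains $1$ (as $1\in L(\ker{\Delta_G})\subseteq P$), and it is closed under inverses since the adjoint of a nonzero partial isometry in $P_\delta$ (obtained from a nonzero element of $P_\delta$ by polar decomposition in $P$) lies in $P_{\delta^{-1}}$. For products, note that $z_\delta=1$ for $\delta\in\Lambda$ means $\bigvee\{vv^*:v\in P_\delta\text{ a partial isometry}\}=1$; passing to adjoints and using $z_{\delta^{-1}}=1$ gives also $\bigvee\{v^*v:v\in P_\delta\text{ a partial isometry}\}=1$. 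Hence for $\delta_1,\delta_2\in\Lambda$ and any nonzero partial isometry $v_2\in P_{\delta_2}$, some partial isometry $v_1\in P_{\delta_1}$ has $v_1^*v_1\,v_2v_2^*\neq 0$, so $v_1^*v_1 v_2\neq 0$ and therefore $v_1v_2=v_1(v_1^*v_1 v_2)\neq 0$ lies in $P_{\delta_1\delta_2}$, giving $\delta_1\delta_2\in\Lambda$. Now put $K:=\Lambda^\perp$. The inclusion $P\subseteq L(G)^K$ is immediate: each $x\in P$ is a weak limit of finite sums of elements of the $P_\delta$ with $\delta\in\Lambda$, all of which are $\alpha_\gamma$-fixed for $\gamma\in\Lambda^\perp$.

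The heart of the matter is the reverse inclusion $L(G)^K\subseteq P$. By bipolarity for the pairing $\Delta_G(G)\leftrightarrow K$ (using that $\Lambda$ is a subgroup of the discrete group $\Delta_G(G)$), $K_\perp=\Lambda$, so by (\ref{eqn:fixed_point_crossed_product_duality}) we have $L(G)^K=L(H)$ with $\Delta_G(H)=\Lambda$; since $L(H)$ is generated by $L(\ker{\Delta_G})\subseteq P$ together with $\{u_\delta:\delta\in\Lambda\}$, it suffices to show $u_\delta\in P$ for each $\delta\in\Lambda$. Fix such $\delta$ and set $V:=\{w\in L(\ker{\Delta_G}):wu_\delta\in P\}=(Pu_\delta^*)\cap L(\ker{\Delta_G})$, a weakly closed subspace with $Vu_\delta=P_\delta$. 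Using $L(\ker{\Delta_G})\subseteq P$ and surjectivity of $\operatorname{Ad}(u_\delta)$ on $L(\ker{\Delta_G})$, one checks $V$ is both a left and a right $L(\ker{\Delta_G})$-submodule of $L(\ker{\Delta_G})$; hence $V=fL(\ker{\Delta_G})$ for a projection $f$, and the left-ideal property forces $(1-f)L(\ker{\Delta_G})f=0$, i.e. $f$ central. Since $(wu_\delta)(wu_\delta)^*=ww^*$, the hypothesis gives $f=\bigvee\{ww^*:w\in V\text{ a partial isometry}\}=z_\delta=1$, so $V=L(\ker{\Delta_G})\ni 1$ and $u_\delta\in P$. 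Therefore $P=L(G)^K$ with $K=\Lambda^\perp=\Sd(\varphi_G|_P)^\perp$. The main obstacle I anticipate is exactly this last paragraph: squeezing out of the bare support condition $z_\delta=1$ enough structure to force the generators $u_\delta$ into $P$. The identification $V=fL(\ker{\Delta_G})$ with $f$ central is the device that makes this go through without assuming $L(\ker{\Delta_G})$ is a factor, and establishing the two-sided module structure on $V$ (especially the right-module property, which uses surjectivity of $\operatorname{Ad}(u_\delta)$) is the step requiring the most care; the other genuinely hypothesis-dependent point is Step~1, where $z_{\delta^{-1}}=1$ is needed to obtain full initial supports and thereby compose eigenvectors non-trivially.
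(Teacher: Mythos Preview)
Your argument is correct and follows the same overall architecture as the paper's proof: both directions hinge on showing that whenever $z_\delta=1$ the unitary $u_\delta=\lambda_G(\sigma(\delta))$ already lies in $P$, and both use the spectral decomposition of the compact action $\alpha|_P$ together with $L(G)_\delta=L(\ker{\Delta_G})u_\delta$.

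The execution of the key step, however, differs. For the converse the paper first uses \cite[Lemma~2.1]{GGLN25} to write $1=z_\delta=\sum_{v\in\mathcal V_\delta} vv^*$ as an orthogonal sum of partial isometries in $P_\delta$, and then observes the one--line identity
\[
  \lambda_G(s)^*=\sum_{v\in\mathcal V_\delta}(\lambda_G(s)^*v)\,v^*\in L(\ker{\Delta_G})\cdot P\subseteq P
\]
for $s\in\Delta_G^{-1}(\{\delta\})$, since each $\lambda_G(s)^*v$ is a $1$--eigenvector and hence in $L(\ker{\Delta_G})$. This immediately puts all the $u_\delta$ into $P$, and the fact that $\Sd(\varphi_G|_P)$ is a group then drops out \emph{a posteriori} by multiplying and inverting these unitaries. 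By contrast, you first prove $\Lambda:=\Sd(\varphi_G|_P)$ is a group (using $z_{\delta^{-1}}=1$ to obtain full initial supports so that products of eigen partial isometries are nonzero), and then, for each $\delta\in\Lambda$, you analyze $V=(Pu_\delta^*)\cap L(\ker{\Delta_G})$ as a weak$^*$-closed two-sided $L(\ker{\Delta_G})$-submodule of $L(\ker{\Delta_G})$, identify it with $fL(\ker{\Delta_G})$ for a central projection $f$, and read off $f=z_\delta=1$. Your route is more self-contained (it avoids appealing to the external lemma producing an orthogonal family) and makes the role of the normality of $\ker{\Delta_G}$ explicit via the right-module property of $V$; the paper's route is shorter and yields the group property of $\Sd(\varphi_G|_P)$ for free, without the separate ``product'' argument you flagged as delicate.
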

\begin{proof}
First note that $\varphi_G$ is semifinite on $P$ since it is semifinite on $L(\ker{\Delta_G})$. Additionally, $P$ is $\sigma^{\varphi_G}$-invariant since $\alpha$ extends $\sigma^{\varphi_G}$, and consequently $L^2(P,\varphi_G) \leq L^2(L(G),\varphi_G)$ is an invariant subspace for $\Delta_{\varphi_G}$. It follows that $\varphi_G|_{P}$ is almost periodic, and thus $z_\delta=0$ for $\delta\not\in \Sd(\varphi_G|_P)$ and otherwise is a projection in the center of $L(\ker{\Delta_G})$ by \cite[Lemma 2.1]{GGLN25}.

Now, first suppose that $P=L(G)^K$ for some closed subgroup $K\leq \Delta_G(G)\hat{\ }$. Then $\Sd(\varphi_G|_P)=K_\perp$ by (\ref{eqn:fixed_point_crossed_product_duality}) and so $z_\delta=0$ for all $\delta\not\in K_\perp$ by the first part of the proof. For $\delta\in K_\perp$, let $s\in \Delta_G^{-1}(\{\delta\})$ so that $\lambda_G(s)\in L(G)^K=P$. Consequently, $z_{\delta} \geq \lambda_G(s)\lambda_G(s)^*=1$, and therefore $z_\delta =1$. Also note that in this case one has $K=(K_\perp)^\perp =\Sd(\varphi_G|_P)^\perp$ as claimed.

Conversely, suppose $z_\delta=1_{\Sd(\varphi_G|_P)}(\delta)$. For each $\delta\in \Sd(\varphi_G|_P)$, \cite[Lemma 2.1]{GGLN25} allows us to write
    \[
        \sum_{v\in \mathcal{V}_\delta} vv^* = z_\delta =1
    \]
for a family of partial isometries $\mathcal{V}_\delta\subset P$ satisfying $\sigma_t^{\varphi_G}(v) = \delta^{it} v$ for all $t\in \R$ and $v\in \mathcal{V}_\delta$. Then for any $s\in \Delta_G^{-1}(\{\delta\})$ one has $\lambda_G(s)^* v \in L(G)^{\varphi_G} = L(\ker{\Delta_G})$ for all $v\in \mathcal{V}_\delta$, and thus
    \[
        \lambda_G(s)^* = \sum_{v\in \mathcal{V}_\delta} (\lambda_G(s)^* v) v^* \subset L(G)^{\varphi_G} P = P.
    \]
Thus $\lambda_G(\Delta_G^{-1}(\Sd(\varphi_G|_P))) \subset P$. Observe that this implies $\Sd(\varphi_G|_P)$ is a group since for $\delta_1,\delta_2\in \Sd(\varphi_G|_P)$, if $s_i\in \Delta_G^{-1}(\{\delta_i\})$ for $i=1,2$ then $\lambda_G(s_1s_2)=\lambda_G(s_1)\lambda_G(s_2)\in P\setminus\{0\}$ and $\lambda_G(s_1)^* \in P\setminus\{0\}$ imply $\delta_1\delta_2, \delta_1^{-1}\in \Sd(\varphi_G|_P)$. So we can consider $K=\Sd(\varphi_G|_P)^\perp$ and $(\ref{eqn:fixed_point_crossed_product_duality})$ implies $L(G)^K\leq P$. On the other hand, $P$ is generated by eigenoperators of $\sigma^{\varphi_G}$ with eigenvalues in $\Sd(\varphi_G|_P)$ (see \cite[Lemma 1.4]{GGLN25}). Since $\alpha$ extends $\sigma^{\varphi_G}$, it follows that for any eigenoperator $x\in P$ one has $\alpha_\gamma(x) = (\gamma\mid \delta) x$ for all $\gamma\in \Delta_G(G)\hat{\ }$ and some $\delta\in \Sd(\varphi_G|_P)$. But then $K= \Sd(\varphi_G|_P)^\perp$ implies $x\in L(G)^K$, and hence $P=L(G)^K$.
\end{proof}

\begin{thm}\label{thm:intermediate_algebras}
Let $G$ be a second countable almost unimodular group and let $\varphi_G$ be a Plancherel weight on $L(G)$. Assume that $G$ is non-unimodular. Then every intermediate von Neumann algebra $L(\ker{\Delta_G})\leq P\leq L(G)$ is of the form $P=L(H)$ for a closed intermediate group $\ker{\Delta_G}\leq H\leq G$ if and only if $L(\ker{\Delta_G})$ is a factor.
\end{thm}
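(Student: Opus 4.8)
The plan is to prove the two implications by different means: the ``if'' direction via the Galois correspondence of Izumi--Longo--Popa \cite{ILP98} for minimal actions of compact groups on factors, and the ``only if'' direction by contraposition, exhibiting an explicit intermediate von Neumann algebra that is not of the form $L(H)$.

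\textbf{The ``if'' direction.} Assume $L(\ker{\Delta_G})$ is a factor. By Corollary~\ref{cor:factoriality}, $L(G)$ is then a factor as well, and combining \cite[Theorem 10]{Con72} with the identification $L(\ker{\Delta_G})\cong L(G)^{\varphi_G}$ of Theorem~\ref{thm:when_is_plancherel_weight_almost_periodic} gives $L(\ker{\Delta_G})'\cap L(G) = (L(G)^{\varphi_G})'\cap L(G)^{\varphi_G} = Z(L(\ker{\Delta_G})) = \C$. I would then check that the point modular extension $\alpha\colon \Delta_G(G)\hat{\ }\curvearrowright L(G)$ is a faithful, minimal action of a compact metrizable group in the sense needed for \cite{ILP98}: the group $\Delta_G(G)\hat{\ }$ is compact and metrizable since $\Delta_G(G)$ is countable by Proposition~\ref{prop:plancherel_weight_almost_periodic_sigma_compact_and_second_countable_case}; faithfulness holds because $\alpha_\gamma = \Ad{U_\gamma}$ with $U_\gamma$ the multiplication operator $s\mapsto(\gamma\mid\Delta_G(s))$ on $L^2(G)$, and $\alpha_\gamma=\mathrm{id}$ would force $U_\gamma$ to commute with all left translations, hence to be scalar, hence $\gamma=1$; the fixed-point algebra is $L(G)^\alpha = L(G)^{\sigma^{\varphi_G}} = L(G)^{\varphi_G} = L(\ker{\Delta_G})$, the first equality because $\hat{\iota}(\R)$ is dense in $\Delta_G(G)\hat{\ }$; and $L(\ker{\Delta_G})'\cap L(G)=\C$ from above. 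Now any intermediate von Neumann algebra $L(\ker{\Delta_G})\leq P\leq L(G)$ automatically has $Z(P)=P'\cap P\subseteq L(\ker{\Delta_G})'\cap L(G)=\C$, so it is a subfactor, and the Galois correspondence of \cite{ILP98} gives $P = L(G)^K$ for a closed subgroup $K\leq\Delta_G(G)\hat{\ }$. Finally (\ref{eqn:fixed_point_crossed_product_duality}) identifies $L(G)^K$ with $L(K_\perp \sltimes{(\beta,c)} \ker{\Delta_G})$, which is $L(H)$ for the closed intermediate group $H = K_\perp \sltimes{(\beta,c)} \ker{\Delta_G}$.

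\textbf{The ``only if'' direction.} Arguing the contrapositive, suppose $L(\ker{\Delta_G})$ is not a factor; I build an intermediate algebra not of the form $L(H)$, using $\Delta_G(G)\neq\{1\}$ crucially. By Corollary~\ref{cor:factoriality} one of two cases occurs. \emph{Case 1: $L(G)$ is not a factor.} Pick $z\in Z(L(G))\setminus\{0,1\}$ (then $z\in L(\ker{\Delta_G})$, being $\sigma^{\varphi_G}$-fixed) and set $P := L(\ker{\Delta_G})z\oplus L(G)(1-z)$, an intermediate von Neumann algebra. If $P = L(H) = L(\ker{\Delta_G})\rtimes\Delta_G(H)$, then compressing by $z$ (central in $L(G)$) forces $\Delta_G(H)=\{1\}$, hence $P = L(\ker{\Delta_G})$; but $P(1-z)=L(G)(1-z)\neq L(\ker{\Delta_G})(1-z)$ since $\Delta_G(G)\neq\{1\}$, a contradiction. \emph{Case 2: $L(G)$ is a factor.} Then there are $s_0\in G$ and a nonzero projection $z_0\in Z(L(\ker{\Delta_G}))$ with $z_0\lambda_G(s_0)z_0=0$; as $\lambda_G(s_0)$ is unitary this forces $\delta_0:=\Delta_G(s_0)\neq 1$ and $z_0\neq 1$, and $\delta_0$ has infinite order because $\Delta_G(G)$ is a torsion-free subgroup of $\R_+$. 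Put $v:=z_0\lambda_G(s_0)$. Then $v^2 = (z_0\lambda_G(s_0)z_0)\lambda_G(s_0)=0$, and using that $z_0$ is central in $L(\ker{\Delta_G})$ and that $z_0\,\Ad{\lambda_G(s_0)}(z_0) = z_0\lambda_G(s_0)z_0\lambda_G(s_0)^{-1}=0$, one also gets $vmv=0$ for every $m\in L(\ker{\Delta_G})$, while $vmv^*$, $v^*mv$, $vv^*$, $v^*v\in L(\ker{\Delta_G})$. Hence the $*$-algebra generated by $L(\ker{\Delta_G})$ and $v$ is spanned by $L(\ker{\Delta_G})$, $L(\ker{\Delta_G})\,v\,L(\ker{\Delta_G})$ and $L(\ker{\Delta_G})\,v^*\,L(\ker{\Delta_G})$, and the normal ``Fourier component'' expectations $E_\delta$ of the identification $L(G)\cong L(\ker{\Delta_G})\rtimes_{(\check\alpha,\check c)}\Delta_G(G)$ (built from $\mathcal{E}_{\varphi_G}$) vanish on this algebra for $\delta\notin\{1,\delta_0,\delta_0^{-1}\}$; by normality the same holds for $P:=(L(\ker{\Delta_G})\cup\{v\})''$. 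If $P$ were $L(H)$, then $\{\delta : E_\delta(P)\neq 0\}$ would be the subgroup $\Delta_G(H)$; it contains $\delta_0$ (as $v\neq 0$ has Fourier degree $\delta_0$), hence $\delta_0^2$, contradicting $\delta_0^2\notin\{1,\delta_0,\delta_0^{-1}\}$.

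\textbf{Anticipated obstacle.} The logical skeleton is clean; the delicate points lie in the verifications. For the ``if'' direction the main work is confirming that $\alpha$ genuinely meets the hypotheses of \cite{ILP98} (faithfulness and minimality in their precise sense), and that the $L(G)^K$ exhaust the intermediate algebras---the automatic subfactoriality of every intermediate $P$ is the simplification that makes this go through. For the ``only if'' direction one must carefully handle the twisted-crossed-product bookkeeping: justifying that ``Fourier support'' is preserved when passing to the generated von Neumann algebra (via normality of the $E_\delta$), and, in Case~2, that the degree-$\delta_0$ component of $P$ is strictly contained in $L(\ker{\Delta_G})\lambda_G(s_0)$ precisely because $z_0\neq 1$.
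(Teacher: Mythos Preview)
Your proof is correct. The ``if'' direction is essentially identical to the paper's argument, also invoking \cite{ILP98} after checking minimality of the point modular extension.

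For the ``only if'' direction you take a genuinely different route. The paper gives a single uniform construction: for any nontrivial central projection $z\in Z(L(\ker{\Delta_G}))$ and any $s\in G\setminus\ker{\Delta_G}$, it sets $P=(L(\ker{\Delta_G})\cup\{z\lambda_G(s)\})''$ and then invokes Proposition~\ref{prop:characterizing_fixed_point_intermediate_algebras} to show that the invariant $z_{\Delta_G(s)}$ equals $z$ (neither $0$ nor $1$), hence $P$ cannot be of the form $L(G)^K$. No case split is needed, and no constraint like $z\lambda_G(s)z=0$ is imposed; the work goes into the noncommutative-polynomial argument proving $z_{\Delta_G(s)}\le z$. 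By contrast, you split according to Corollary~\ref{cor:factoriality}: when $L(G)$ is not a factor you build the ``blocked'' algebra $L(\ker{\Delta_G})z\oplus L(G)(1-z)$; when $L(G)$ is a factor you extract $z_0,s_0$ with $z_0\lambda_G(s_0)z_0=0$, which forces the $*$-algebra generated by $L(\ker{\Delta_G})$ and $v=z_0\lambda_G(s_0)$ to have Fourier support $\{1,\delta_0,\delta_0^{-1}\}$, and then pass to the closure via normality of the Fourier expectations. Your argument is more hands-on and avoids the $z_\delta$ machinery, at the cost of the case distinction and the need for the extra orthogonality $z_0\lambda_G(s_0)z_0=0$ in Case~2; the paper's version is more uniform and ties directly into the general characterization of fixed-point intermediate subalgebras.
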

\begin{proof}
\textbf{($\Rightarrow$):} We proceed by contrapositive and suppose $L(\ker{\Delta_G})$ is not a factor. Let $z\in L(\ker{\Delta_G})\cap L(\ker{\Delta_G})'$ be a projection which is neither zero nor one, and let $s\in G\setminus\ker{\Delta_G}$. Then the von Neumann algebra $P$ generated by $L(\ker{\Delta_G})$ and $z\lambda_G(s)$ is an intermediate von Neumann algebra $L(\ker{\Delta_G}) \leq P\leq L(G)$ but \emph{not} of the form $L(H)$ for any closed intermediate group $\ker{\Delta_G}\leq H\leq G$. Indeed, recall from the discussion at the beginning of this subsection that if $\alpha\colon \Delta_G(G)\hat{\ }\curvearrowright L(G)$ is the point modular extension of $\sigma^{\varphi_G}$, then it suffices to show $P\neq L(G)^K$ for any closed $K\leq \Delta_G(G)\hat{\ }$. Noting that $P$ is $\alpha$-invariant by virtue of $\alpha_\gamma( z\lambda_G(s)) = (\gamma\mid \Delta_G(s)) z \lambda_G(s)$ for all $\gamma\in \Delta_G(G)\hat{\ }$, we see that it suffices by Proposition~\ref{prop:characterizing_fixed_point_intermediate_algebras} to show $z_{\Delta_G(s)}\neq 1$. Rather, we claim that $z_{\Delta_G(s)}=z$. First note that
    \[
        z = (z\lambda_G(s)) (z\lambda_G(s))^* \leq z_{\Delta_G(s)}.
    \]
To see the other inequality, fix a partial isometry $v\in P$ satisfying $\alpha_\gamma(v) = (\gamma \mid \Delta_G(s)) v$ for all $\gamma\in \Delta_G(G)\hat{\ }$. By definition of $P$, $v$ can be approximated in the strong operator topology by a net $(p_i)_{i\in I}$ from the unital $*$-algebra generated by $L(\ker{\Delta_G})$ and $\{z\lambda_G(s), \lambda_G(s)^*z\}$. Viewing $p_i$ as a noncommutative polynomial in $\{z\lambda_G(s),\lambda_G(s)^*z\}$ with coefficients in $L(\ker{\Delta_G})$, we can write  $p_i=\sum_j m_{i,j}$ with each term of the form
    \[
        m_{i,j}= a_0 [z\lambda_G(s)]^{\epsilon_1} a_1\cdots a_{d-1} [z\lambda_G(s)]^{\epsilon_d} a_d
    \]
where $a_0,\ldots, a_d\in L(\ker{\Delta_G})$ and $\epsilon_1,\ldots, \epsilon_d \in \{1,*\}$. Define $\E_{\Delta_G(s)}(x):= \lambda_G(s)\E_{\varphi_G}(\lambda_G(s)^*x)$, where $\E_{\varphi_G}\colon L(G)\to L(\ker{\Delta_G})$ is the unique $\varphi_G$-preserving faithful normal conditional expectation. Then $v=\E_{\Delta_G(s)}(v)$, and so replacing each $p_i$ with $\E_{\Delta_G(s)}(p_i)$ we still have a net converging to $v$ in the strong operator topology. Moreover, noting that
    \begin{align*}
        \E_{\Delta_G(s)}&\left(a_0 [z\lambda_G(s)]^{\epsilon_1} a_1\cdots a_{d-1} [z\lambda_G(s)]^{\epsilon_d} a_d \right)\\
            &= \begin{cases}
            a_0 [z\lambda_G(s)]^{\epsilon_1} a_1\cdots a_{d-1} [z\lambda_G(s)]^{\epsilon_d} a_d &\text{if }|\{k\colon \epsilon_k=1\}| = |\{k\colon \epsilon_k=*\}|+1\\ 0 & \text{otherwise},
        \end{cases}
    \end{align*}
we see that this replacement merely deletes some terms $m_{i,j}$ in $p_i$ and preserves those for which there are exactly one more factor of $z\lambda_G(s)$ than of $\lambda_G(s)^*z$. Fix an $i\in I$ and a surviving term $m_{i,j}$, and let $1\leq k\leq d$ be the smallest index such that $|\{1\leq k'\leq k\colon \epsilon_{k'}=1\}| = |\{1\leq k'\leq k\colon \epsilon_{k'}=*\}|+1$; that is, reading $m_{i,j}$ from left to right the factor $(z\lambda_G(s))^{\epsilon_k}$ is the first time the number of factors of $z\lambda_G(s)$ exceeds the number of factors of $\lambda_G(s)^*z$. It follows that $\epsilon_k=1$ and $|\{1\leq k' \leq k-1\colon \epsilon_{k'}=1\}|= |\{1\leq k'\leq k-1\colon \epsilon_{k'}=*\}|$, and therefore the subword $a_0[z\lambda_G(s)]^{\epsilon_1}\cdots [z\lambda_G(s)]^{\epsilon_{k-1}} a_{k-1}$ lies in $L(G)^{\varphi_G}=L(\ker{\Delta_G})$. Recalling that $z$ lies in the center of $L(\ker{\Delta_G})$, we therefore have
    \begin{align*}
        m_{i,j} &= a_0[z\lambda_G(s)]^{\epsilon_1}\cdots [z\lambda_G(s)]^{\epsilon_{k-1}} a_{k-1} [z\lambda_G(s)] a_k \cdots (z\lambda_G(s))^{\epsilon_d} a_d\\
        &= z a_0[z\lambda_G(s)]^{\epsilon_1}\cdots [z\lambda_G(s)]^{\epsilon_{k-1}} a_{k-1} \lambda_G(s) a_k \cdots (z\lambda_G(s))^{\epsilon_d} a_d = z m_{i,j}.
    \end{align*}
Applying this to each term in $p_i$, we see that $p_i= zp_i$. Hence $v=zv$ as the strong operator topology limit of $(p_i)_{i\in I}$, and therefore $vv^* = z vv^* z \leq z$. This holds for every partial isometry in the definition of $z_{\Delta_G(s)}$ and so, combined with the previous inequality, we have $z_{\Delta_G(s)}=z$.\\

\noindent \textbf{($\Leftarrow$):} Suppose $L(\ker{\Delta_G})$ is a factor. Once again, let $\alpha\colon \Delta_G(G)\hat{\ }\curvearrowright L(G)$ be the point modular extension of $\sigma^{\varphi_G}$. Then the fixed point subalgebra is
    \[
        L(G)^{\Delta_G(G)\hat{\ }} = L(G)^{\varphi_G} = L(\ker{\Delta_G}),      
    \]
and the almost periodicity of $\varphi_G$ implies
    \[
         L(\ker{\Delta_G})' \cap L(G) = L(\ker{\Delta_G})'\cap L(\ker{\Delta_G})=\C
    \]
(see \cite[Theorem 10]{Con72}). Thus $\alpha$ is \emph{minimal} in the sense of \cite{ILP98}. Also observe that any intermediate von Neumann algebra $L(\ker{\Delta_G})\leq P\leq L(G)$ (including $P=L(G)$) is a factor since
    \[
        P'\cap P \subset (L(\ker{\Delta_G})'\cap L(G)=\C.
    \]
Finally, the second countability of $G$ implies $L(G)$ is separable (i.e. has a separable predual), and so we can apply \cite[Theorem 3.15]{ILP98} to see that every intermediate von Neumann algebra $P$ is of the form $L(G)^K$ for some closed $K\leq \Delta_G(G)\hat{\ }$. By the discussion at the beginning of this subsection, this implies $P=L(H)$ for the closed group $H=K_\perp \sltimes{(\beta,c)} \ker{\Delta_G} \leq G$.
\end{proof}

\subsection{Finite covolume subgroups and Murray--von Neumann dimension}

Recall that we say a closed subgroup $H\leq G$ of a locally compact group has \emph{finite covolume} if the quotient space $G/H$ admits a finite (non-trivial) $G$-invariant Radon measure $\mu_{G/H}$. It is always possible to normalize this measure in such a way that
    \begin{align}\label{eqn:quotient_Haar_measure_formula}
        \int_G f d\mu_G = \int_{G/H} \int_H f(st)\ d\mu_H(t) d\mu_{G/H}(sH) \qquad \qquad f\in L^1(G),
    \end{align}
where $\mu_G$ and $\mu_H$ are fixed left Haar measures on $G$ and $H$ respectively (see \cite[Theorem 2.49]{Fol16}). In this case, the \emph{covolume} of $(H,\mu_H) \leq (G,\mu_G)$ is the quantity
    \[
        [\mu_G:\mu_H ]:= \mu_{G/H}(G/H).
    \]
Note that when $H\leq G$ is a finite index inclusion and $\mu_H=\mu_G|_{\B(H)}$, then $\mu_{G/H}$ is the counting measure and one has $[\mu_G : \mu_H]=[G: H]$.

Suppose now that $G$ is an almost unimodular group and $H\leq G$ is a finite covolume subgroup. Finite covolume implies that $\Delta_G|_H= \Delta_H$ (see \cite[Theorem 2.49]{Fol16}), and hence $H$ is almost unimodular by Proposition~\ref{prop:almost_unimodular_subgroups}. Additionally, the openness of $\ker{\Delta_G}$ implies $(\ker{\Delta_G})H$ is an open subset of $G/H$.  A standard argument then implies $\mu_{G/H}((\ker{\Delta_G})H)>0$ (see \cite[Proposition 2.60]{Fol16}), which has two important consequences. The first is that $\ker{\Delta_H}$ is finite covolume in $\ker{\Delta_G}$ and the second is that $\Delta_H(H)=\Delta_G(H)$ is a finite index subgroup of $\Delta_G(G)$. Indeed, $\ker{\Delta_H} = \ker{\Delta_G}\cap H$ implies $\ker{\Delta_G}/\ker{\Delta_H}$ is homeomorphic to $(\ker{\Delta_G})H$, and the restriction of $\mu_{G/H}$ to this set is a non-trivial, finite, $(\ker{\Delta_G})$-invariant Radon measure. To see the second claim, fix a set of coset representatives $\Delta$ for $\Delta_G(G)/\Delta_H(H)$ and fix $s_\delta\in \Delta_G^{-1}(\{\delta\})$ for each $\delta\in \Delta$. Then one has
    \begin{align}\label{eqn:quotient_decomposition_over_kernel}
        G/H = \bigsqcup_{\delta\in \Delta} s_\delta (\ker{\Delta_G})H.
    \end{align}
The finiteness and $G$-invariance of $\mu_{G/H}$ therefore imply $|\Delta|<\infty$. We record these observations in the following proposition:

\begin{prop}\label{prop:finite_covolume_subgroups}
Let $G$ be an almost unimodular group with finite covolume subgroup $H\leq G$. Then:
    \begin{enumerate}[label=(\alph*)]
        \item $H$ is almost unimodular;
        \item $\Delta_G|_H=\Delta_H$;
        \item $\ker{\Delta_H}$ has finite covolume in $\ker{\Delta_G}$;
        \item $\Delta_H(H)$ is a finite index subgroup of $\Delta_G(G)$.
    \end{enumerate}
\end{prop}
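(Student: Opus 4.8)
The plan is to establish (b) directly from the defining property of finite covolume, obtain (a) as an immediate corollary, and then deduce (c) and (d) together by transporting the finite $G$-invariant Radon measure $\mu_{G/H}$ across an open decomposition of $G/H$ controlled by $\ker{\Delta_G}$.

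For (b), I would simply invoke \cite[Theorem 2.49]{Fol16}: the hypothesis that $G/H$ carries a non-trivial $G$-invariant Radon measure is exactly equivalent to $\Delta_G|_H=\Delta_H$. Granting this, (a) is then immediate from Proposition~\ref{prop:almost_unimodular_subgroups}, since $\Delta_G|_H=\Delta_H$ makes the constant function a rho-function for $(G,H)$, so the set $\{t\in H:\rho(t)=\rho(e)\}$ in that proposition is all of $H$ and hence open in $H$.

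For (c) and (d), the first step is to observe that $\ker{\Delta_G}$ is normal in $G$ (kernel of a homomorphism) and open in $G$ (almost unimodularity), so $N:=(\ker{\Delta_G})H$ is an open subgroup of $G$ and its image $N/H$ is open in $G/H$, where $q\colon G\to G/H$ is the (open) quotient map. A standard argument (cf.\ \cite[Proposition 2.60]{Fol16}) then gives $\mu_{G/H}(N/H)>0$. Next, the second isomorphism theorem furnishes a continuous bijective homomorphism $\ker{\Delta_G}/\ker{\Delta_H}\to N/H$ (using $\ker{\Delta_H}=\ker{\Delta_G}\cap H$, which follows from (b)); since $\ker{\Delta_G}$ is open in $G$ this map is open, hence a homeomorphism, and it is equivariant for the left $\ker{\Delta_G}$-actions. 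Restricting $\mu_{G/H}$ to the open set $N/H$ and pulling back along this homeomorphism then produces a non-trivial, finite, $\ker{\Delta_G}$-invariant Radon measure on $\ker{\Delta_G}/\ker{\Delta_H}$, proving (c). For (d), note $\Delta_H(H)=\Delta_G(H)\leq\Delta_G(G)$ by (b); choosing coset representatives $\Delta$ for $\Delta_G(G)/\Delta_H(H)$ and lifts $s_\delta\in\Delta_G^{-1}(\{\delta\})$, one checks via the homomorphism $\Delta_G$ that
\[
    G/H=\bigsqcup_{\delta\in\Delta}s_\delta\cdot(N/H),
\]
the pieces being disjoint because $\Delta_G$ distinguishes the classes $\delta\Delta_H(H)$. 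Since $\mu_{G/H}$ is $G$-invariant, each $s_\delta\cdot(N/H)$ has measure $\mu_{G/H}(N/H)>0$, so finiteness of $\mu_{G/H}(G/H)$ forces $|\Delta|<\infty$, i.e.\ $[\Delta_G(G):\Delta_H(H)]<\infty$.

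The argument is largely bookkeeping once \cite[Theorem 2.49]{Fol16} is available; the only point needing a bit of care is checking that the homeomorphism $\ker{\Delta_G}/\ker{\Delta_H}\cong N/H$ is genuinely $\ker{\Delta_G}$-equivariant and that the restricted measure, after rescaling, can be normalized to satisfy (\ref{eqn:quotient_Haar_measure_formula}) for the pair $(\ker{\Delta_G},\ker{\Delta_H})$ — so that it really witnesses \emph{finite covolume} rather than merely finiteness of total mass.
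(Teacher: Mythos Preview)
Your proposal is correct and follows essentially the same route as the paper: both invoke \cite[Theorem 2.49]{Fol16} for (b), deduce (a) via Proposition~\ref{prop:almost_unimodular_subgroups}, and obtain (c) and (d) by showing $(\ker{\Delta_G})H/H$ is open with positive $\mu_{G/H}$-measure, identifying it with $\ker{\Delta_G}/\ker{\Delta_H}$, and using the coset decomposition $G/H=\bigsqcup_{\delta\in\Delta} s_\delta(\ker{\Delta_G})H$ together with $G$-invariance and finiteness of $\mu_{G/H}$. Your extra remarks on equivariance and normalization are more careful than the paper's sketch but do not constitute a different argument.
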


The main goal of this subsection is to apply the Murray--von Neumann dimension theory for strictly semifinite weights developed in \cite{GGLN25} to Plancherel weights of almost unimodular groups. Recall from \cite{GGLN25} that an \emph{$(L(G),\varphi_G)$-module} is defined to be a pair $(\pi,\H)$ where $\pi\colon \<L(G),e_{\varphi_G}\> \to B(\H)$ is a normal unital $*$-homomorphism. For such a pair there always exists an ancillary Hilbert space $\K$ and an isometry $v\colon \H \to L^2(G)\otimes \K$ called a \emph{standard intertwiner} satisfying $v\pi(x) = (x\otimes 1)v$ for all $x\in \<L(G),\varphi_G\>$ (see \cite[Proposition 2.4]{GGLN25}). The \emph{Murray--von Neumann dimension} of $(\pi,\H)$ is defined as
    \[
        \dim_{(L(G),\varphi_G)}(\pi,\H):= (\varphi_G \otimes \Tr_{\K})\left[ (J_{\varphi_G}\otimes 1) vv^* (J_{\varphi_G}\otimes 1)\right],
    \]
and it is independent of $\K$ and $v$ (see \cite[Proposition 2.8]{GGLN25}). 

In the proof of the following theorem, it will once again be useful to recall that $e_{\varphi_G} = 1_{\ker{\Delta_G}}$ under the identification $L^2(L(G),\varphi_G)\cong L^2(G)$, and more generally 
    \[
        \lambda_G(s) e_{\varphi_G} \lambda_G(s)^* = 1_{\Delta_G^{-1}(\{\delta\})},
    \]
for any $s\in G$ with $\Delta_G(s)=\delta$. We will also implicitly use that for a closed subgroup $H\leq G$, $L(H)$ can be identified with $\{\lambda_G(t)\colon t\in H\}'' \leq L(G)$ (see, for example, \cite[Proposition 2.8]{HR19}).

\begin{thm}[{Theorem~\ref{introthm:C}}]\label{thm:covolume_dimension_formula}
Let $G$ be a second countable almost unimodular group with finite covolume subgroup $H\leq G$, and let $\varphi_G$ (resp. $\varphi_H$) be the Plancherel weight on $L(G)$ (resp. $L(H)$) associated to a left Haar measure $\mu_G$ on $G$ (resp. $\mu_H$ on $H$).
For each set $\Delta$ of coset representatives of $\Delta_H(H)\leq \Delta_G(G)$, there exists a unique injective, normal, unital $*$-homomorphism $\theta_\Delta\colon \<L(H),e_{\varphi_H}\> \to \<L(G),e_{\varphi_G}\>$ satisfying
    \[
        \theta_\Delta(\lambda_H(t))= \lambda_G(t)\quad t\in H, \qquad \text{ and } \qquad \theta_\Delta(e_{\varphi_H})=\sum_{\delta\in \Delta} 1_{\Delta_G^{-1}(\{\delta\})}.
    \]
Moreover, if $(\pi,\H)$ is a left $(L(G),\varphi_G)$-module, then $(\pi\circ \theta_\Delta,\H)$ is a left $(L(H),\varphi_H)$-module with
    \begin{align}\label{eqn:covolume_dimension_formula}
        \dim_{(L(H),\varphi_H)}(\pi\circ\theta_{\Delta},\H) = \left(\frac{1}{|\Delta|}\sum_{\delta\in \Delta} \delta\right) [\mu_G: \mu_H] \dim_{(L(G),\varphi_G)}(\pi,\H).
    \end{align}
\end{thm}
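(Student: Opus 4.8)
We outline the argument. By Proposition~\ref{prop:finite_covolume_subgroups}, $H$ is almost unimodular, $\Delta_G|_H=\Delta_H$, and $\Delta_H(H)$ has finite index in $\Delta_G(G)$, so $\Delta$ is finite; fix $s_\delta\in\Delta_G^{-1}(\{\delta\})$ for each $\delta\in\Delta$, with $s_1=e$. Restricting the disintegration formula (\ref{eqn:quotient_Haar_measure_formula}) to $C_c(\ker{\Delta_G})\subset C_c(G)$ shows that the restriction of $\mu_{G/H}$ to the open set $(\ker{\Delta_G})H\subset G/H$ is exactly the invariant measure on $\ker{\Delta_G}/\ker{\Delta_H}$ determined by $\mu_G|_{\ker{\Delta_G}}$ and $\mu_H|_{\ker{\Delta_H}}$; combined with the decomposition (\ref{eqn:quotient_decomposition_over_kernel}) and the $G$-invariance of $\mu_{G/H}$, this yields $[\mu_G:\mu_H]=|\Delta|\cdot[\mu_{\ker{\Delta_G}}:\mu_{\ker{\Delta_H}}]$, which absorbs the combinatorial factor $1/|\Delta|$ into a genuine (unimodular) covolume.

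To construct $\theta_\Delta$, I would apply Theorem~\ref{thm:extending_reps_to_basic_construction} (for $H$) to the representation $\rho\colon L(H)\to B(L^2(G))$ given by the inclusion $L(H)\subset L(G)$. The representation $U\colon\Delta_H(H)\hat{\ }\to\mathcal U(L^2(G))$ needed there can be taken to consist of multiplication operators: using second countability, choose a Borel section for $H\backslash G$ whose image lies in $\Delta_G^{-1}(\Delta)$ (possible since each left coset $Hg$ meets $\Delta_G^{-1}(\{\delta\})$ for exactly one $\delta\in\Delta$), and let $U_\eta$ be multiplication by the Borel function $g\mapsto(\eta\mid\Delta_G(gh_g^{-1}))$, where $h_g$ is the chosen representative of $Hg$. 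One checks $U_\eta\lambda_G(t)U_\eta^*=(\eta\mid\Delta_H(t))\lambda_G(t)$ for $t\in H$ and, by orthogonality of characters on $\Delta_H(H)\hat{\ }$, that $\int U_\eta\, d\mu(\eta)$ is multiplication by $1_{\Delta_G^{-1}(\Delta)}=\sum_{\delta\in\Delta}1_{\Delta_G^{-1}(\{\delta\})}$, which lies in $\<L(G),e_{\varphi_G}\>$ because $1_{\Delta_G^{-1}(\{\delta\})}=\lambda_G(s_\delta)e_{\varphi_G}\lambda_G(s_\delta)^*$. Since $\<L(G),e_{\varphi_G}\>$ contains $L(G)$ and this projection, the extension furnished by Theorem~\ref{thm:extending_reps_to_basic_construction} is the desired $\theta_\Delta$. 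Uniqueness is immediate since $L(H)$ and $e_{\varphi_H}$ generate $\<L(H),e_{\varphi_H}\>$; injectivity follows because $\theta_\Delta$ is faithful on $L(H)$ and on the corner $e_{\varphi_H}\<L(H),e_{\varphi_H}\>e_{\varphi_H}\cong L(\ker{\Delta_H})$ (the map $x\mapsto x\cdot 1_{\Delta_G^{-1}(\Delta)}$ being faithful on $L(\ker{\Delta_H})$, e.g.\ because $L^2(\ker{\Delta_G})\subset 1_{\Delta_G^{-1}(\Delta)}L^2(G)$).

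The heart of the proof is the dimension formula, which I would obtain by reducing to the unimodular centralizer. For any left $(L(G),\varphi_G)$-module $(\pi,\H)$, the operator $\pi(e_{\varphi_G})$ commutes with $\pi(L(\ker{\Delta_G}))$, so $\pi(e_{\varphi_G})\H$ is an $L(\ker{\Delta_G})$-module; from the formula for the dimension in the statement (and \cite{GGLN25}, since $\varphi_G$ restricts to the Plancherel trace on $L(\ker{\Delta_G})=L(G)^{\varphi_G}$) one gets $\dim_{(L(G),\varphi_G)}(\pi,\H)=\dim_{L(\ker{\Delta_G}),\varphi_G|_{L(\ker{\Delta_G})}}(\pi(e_{\varphi_G})\H)$, and $\pi(e_{\varphi_G})\H$ embeds equivariantly into a multiple of the left regular representation of $\ker{\Delta_G}$; the analogous identity $\dim_{(L(H),\varphi_H)}(\pi\circ\theta_\Delta,\H)=\dim_{L(\ker{\Delta_H})}(\pi(\theta_\Delta(e_{\varphi_H}))\H)$ holds for $H$. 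Now $\pi(\theta_\Delta(e_{\varphi_H}))\H=\bigoplus_{\delta\in\Delta}\pi(1_{\Delta_G^{-1}(\{\delta\})})\H$ as $L(\ker{\Delta_H})$-modules, and for each $\delta$ the unitary $\pi(\lambda_G(s_\delta)e_{\varphi_G})\colon\pi(e_{\varphi_G})\H\to\pi(1_{\Delta_G^{-1}(\{\delta\})})\H$ intertwines the $L(\ker{\Delta_H})$-action on the target with the natural action of $L(s_\delta^{-1}(\ker{\Delta_H})s_\delta)\subset L(\ker{\Delta_G})$ on $\pi(e_{\varphi_G})\H$, under the identification $L(\ker{\Delta_H})\cong L(s_\delta^{-1}(\ker{\Delta_H})s_\delta)$ coming from conjugation by $s_\delta$. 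Since $\ker{\Delta_G}$ is normal in $G$ with discrete (hence unimodular) quotient, (\ref{eqn:mod_function_of_quotient_group}) shows conjugation by $s_\delta$ scales the Haar measure of $\ker{\Delta_G}$ by $\delta^{-1}$; tracking this through the disintegration formula, the subgroup $s_\delta^{-1}(\ker{\Delta_H})s_\delta\le\ker{\Delta_G}$ acquires covolume $\delta\cdot[\mu_{\ker{\Delta_G}}:\mu_{\ker{\Delta_H}}]$ for the transported Haar measure. The finite-covolume scaling formula in the unimodular case, applied to the submodule $\pi(e_{\varphi_G})\H$ of $\lambda_{\ker{\Delta_G}}^{\oplus\infty}$, then gives that the $\delta$-summand has dimension $\delta\cdot[\mu_{\ker{\Delta_G}}:\mu_{\ker{\Delta_H}}]\cdot\dim_{(L(G),\varphi_G)}(\pi,\H)$; since the conjugation identification is trace-preserving it preserves the dimension, so summing over $\delta\in\Delta$ and using $[\mu_G:\mu_H]=|\Delta|\cdot[\mu_{\ker{\Delta_G}}:\mu_{\ker{\Delta_H}}]$ gives exactly (\ref{eqn:covolume_dimension_formula}).

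I expect the main obstacle to be the unimodular finite-covolume scaling step: it fails for abstract modules (cf.\ the examples cited in the introduction), so it must be proved specifically for submodules of $\lambda^{\oplus\infty}$, via the canonical operator-valued weight $T_0\colon L(\ker{\Delta_G})\to L(s_\delta^{-1}(\ker{\Delta_H})s_\delta)$ satisfying $\varphi_{\ker{\Delta_G}}=\varphi_{s_\delta^{-1}(\ker{\Delta_H})s_\delta}\circ T_0$ --- the \emph{kernel part} of the operator-valued weight $T\colon L(G)\to L(H)$ with $\varphi_G=\varphi_H\circ T$ mentioned in the introduction. Alternatively one can run the argument directly with $T$, in which case computing $\varphi_H\circ T$ as an amplification of $\varphi_H$ and isolating the finite contribution controlled by $[\mu_G:\mu_H]$ is the technical crux. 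A secondary point requiring care is the bookkeeping of Haar-measure normalizations under the conjugations $\Ad{s_\delta}$, which is precisely what produces the weighted average $\tfrac1{|\Delta|}\sum_{\delta\in\Delta}\delta$ in place of a plain index.
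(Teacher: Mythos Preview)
Your outline is correct, but the route differs substantially from the paper's. For the construction of $\theta_\Delta$, the paper works spatially rather than through Theorem~\ref{thm:extending_reps_to_basic_construction}: it fixes a Borel section $\sigma\colon G/H\to G$ compatible with the decomposition (\ref{eqn:quotient_decomposition_over_kernel}), builds the unitary $w\colon L^2(G)\to L^2(H)\otimes L^2(G/H,\mu_{G/H})$ by $[wf](t,sH)=f(\sigma(sH)t)$, checks directly that $w\rho_G(t)w^*=\rho_H(t)\otimes 1$ and $wJ_{\varphi_G}eJ_{\varphi_G}w^*=e_{\varphi_H}\otimes 1$, and then sets $\theta_\Delta(x)=J_{\varphi_G}w^*[(J_{\varphi_H}xJ_{\varphi_H})\otimes 1]wJ_{\varphi_G}$. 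For the dimension formula the paper stays at the level of $G$ and $H$ throughout: it shows $(1\otimes\omega_{h_1,h_2})(wxw^*)=T[\lambda_{G/H}(h_2)^*x\lambda_{G/H}(h_1)]$ for the operator-valued weight $T$ with $\varphi_G=\varphi_H\circ T$, picks an orthonormal basis $\mathcal B=\bigcup_{\delta\in\Delta^{-1}}\mathcal B_\delta$ of $L^2(G/H)$ adapted to (\ref{eqn:quotient_decomposition_over_kernel}), proves $\sum_{b\in\mathcal B_\delta}\lambda_{G/H}(b)\lambda_{G/H}(b)^*=\tfrac{1}{|\Delta|}[\mu_G:\mu_H]$, and combines this with the eigenvector property $\sigma_t^{\varphi_G}(\lambda_{G/H}(b))=\delta^{it}\lambda_{G/H}(b)$ for $b\in\mathcal B_\delta$ to obtain the weighted average in one stroke.

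Your reduction to the centralizer is conceptually cleaner and correctly locates the origin of the factors $\delta$ in the Haar-measure scaling under $\mathrm{Ad}(s_\delta)$ (your covolume computation $[\mu_{\ker\Delta_G}:\nu]=\delta\,[\mu_{\ker\Delta_G}:\mu_{\ker\Delta_H}]$ is right). But it only relocates the analytic core: the unimodular finite-covolume scaling you invoke is exactly the $\Delta=\{1\}$ case of the theorem, and the paper does not assume it---its single $T$-based computation proves the unimodular scaling and the $\delta$-weighting simultaneously via the Pimsner--Popa-type basis $\{\lambda_{G/H}(b):b\in\mathcal B\}$. Your final paragraph essentially anticipates this and converges to the paper's method. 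What your approach buys is modularity: once the unimodular case is in hand, the general case follows by a transparent conjugation argument; what the paper's approach buys is a self-contained proof with no appeal to a separately established special case.
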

\begin{proof}
Throughout we denote $G_1:=\ker{\Delta_G}$ and $H_1:=\ker{\Delta_H}$. Fix a (necessarily finite) set $\Delta$ of coset representatives for $\Delta_H(H)\leq \Delta_G(G)$ (see Proposition~\ref{prop:finite_covolume_subgroups}), and set
    \[
        e:= \sum_{\delta\in \Delta} 1_{\Delta_G^{-1}(\{\delta\})}.
    \]
Note that $\Delta^{-1}=\{\delta^{-1}\colon \delta\in \Delta\}$ is also a set of coset representatives. By Proposition~\ref{prop:finite_covolume_subgroups}, $H_1$ has finite covolume in $G_1$, and so \cite[Lemma 1.1]{Mac52} provides a Borel section $\sigma_1 \colon G_1/H_1 \to G_1$. Fix $s_\delta\in \Delta_G^{-1}(\{\delta\})$ for each $\delta\in \Delta^{-1}$, then using (\ref{eqn:quotient_decomposition_over_kernel}) we can define a Borel section $\sigma\colon G/H\to G$ by
    \[
        \sigma(s_\delta s_1 H):= s_\delta \sigma_1(s_1 H_1) \qquad \qquad s_1\in G_1.
    \]
Suppose $\sigma(sH) = s t_s$ for some $t_s\in H$. Then
    \[
        \int_H f(\sigma(sH) t)\ d\mu_H(t) = \int_H f( s t_s t)\ d\mu_H(t) = \int_H f(s t)\ d\mu_H(t),
    \]
by the left invariance of $\mu_H$. Consequently, (\ref{eqn:quotient_Haar_measure_formula}) implies $w\colon L^2(G)\to L^2(H)\otimes L^2(G/H,\mu_{G/H})$ defined by
    \[
        [wf](t, sH) := f(\sigma(sH) t)
    \]
is a unitary with inverse determined by
    \[
        [w^*(g\otimes h)](s) = g(\sigma(sH)^{-1}s)h(sH),
    \]
for $g\in L^2(H)$, $h\in L^2(G/H,\mu_{G/H})$, and $s\in G$. Direct computations then show that for $t\in H$ and $\delta\in \Delta^{-1}$ one has
    \[
        w \rho_G(t) w^* = \rho_H(t)\otimes 1 \qquad \text{ and } \qquad w 1_{\Delta_G^{-1}(\{\delta\})} w^* = 1_{H_1}\otimes 1_{s_\delta G_1 H}.
    \]
It follows from the latter and the identity $J_{\varphi_G} 1_{\Delta_G^{-1}(\{\delta\})} J_{\varphi_G} = 1_{\Delta_G^{-1}(\{\delta^{-1}\})}$ that $wJ_{\varphi_G} e J_{\varphi_G} w^* = e_{\varphi_H}\otimes 1$. Consequently, if we set
    \[
        \theta_\Delta(x):= J_{\varphi_G} w^*[(J_{\varphi_H}xJ_{\varphi_H})\otimes 1]w J_{\varphi_G},
    \]
then $\theta_\Delta\colon \<L(H),e_{\varphi_H}\>\to \<L(G), e_{\varphi_G}\>$ is the desired $*$-homomorphism. 

Now, if $v\colon \H\to L^2(G)\otimes \K$ is a standard intertwiner for a left $(L(G),\varphi_G)$-module $(\pi,\H)$, then it follows that
    \[
        (J_{\varphi_H}\otimes 1\otimes 1)(w\otimes 1)(J_{\varphi_G}\otimes 1) v
    \]
is a standard intertwiner for the left $(L(H),\varphi_H)$-module $(\pi\circ \theta_\Delta, \H)$. Additionally, its Murray--von Neumann dimension is then given by
    \[
        \dim_{(L(H),\varphi_H)}(\pi\circ\theta_{\Delta},\H) = \left(\varphi_H\otimes \Tr_{L^2(G/H,\mu_{G/H})}\otimes \Tr_{\K}\right)\left[ (w\otimes 1)(J_{\varphi_G}\otimes 1)vv^* (J_{\varphi_G}\otimes 1) (w^*\otimes 1) \right],
    \]
Our strategy for relating the above quantity to the dimension of $(\pi,\H)$ as an $(L(G),\varphi_G)$-module will be to express conjugation by $w$ in terms of a faithful normal semifinite operator valued weight $T$ from $L(G)$ to $L(H)$ satisfying $\varphi_G=\varphi_H\circ T$. First note that such an operator valued weight exists by \cite[Theorem IX.4.18]{Tak03} since $H$ having finite covolume implies that $\Delta_G|_H=\Delta_H$ and therefore the modular automorphism group for $\varphi_G$ restricts to that of $\varphi_H$ on the copy of $L(H)$ inside $L(G)$. 

Toward relating $T$ and $w$, for any $h\in L^1(G/H,\mu_{G/H})$ let us denote
    \[
        \lambda_{G/H}(h):=\int_{G/H} \lambda_G(\sigma(sH)) h(sH)\ d\mu_{G/H}(sH) \in L(G).
    \]
For $g\in L^1(H)\cap L^2(H)$ and $h\in L^2(G/H,\mu_{G/H})$, a direct computation shows that $f:=w^*(g\otimes h)\in L^1(G)\cap L^2(G)$ with
    \begin{align}\label{eqn:left_regular_rep_of_w*}
        \lambda_G(f) = \lambda_{G/H}(h) \theta_{\Delta}(\lambda_H(g)).
    \end{align}
Using this, one has for $x\in L(G)$, $g_1,g_2\in L^1(H)\cap L^2(H)$, and $h_1,h_2\in L^2(G/H,\mu_{G/H})$ that
    \begin{align*}
        \< w x w^* g_1\otimes h_1, g_2\otimes h_2\>_{L^2(\mu_H)\otimes L^2(\mu_{G/H})} &= \varphi_G( \lambda_G( w^*(g_2\otimes h_2))^* x \lambda_G( w^*(g_1\otimes h_1))) \\
            &=\varphi_G( \theta_\Delta(\lambda_H(g_2))^* \lambda_{G/H}(h_2)^* x \lambda_{G/H}(h_1) \theta_\Delta(\lambda_H(g_1)) ) \\
            &= \varphi_H( \lambda_H(g_2)^* T[ \lambda_{G/H}(h_2)^* x \lambda_{G/H}(h_1)] \lambda_H(g_1))\\
            &= \< T[\lambda_{G/H}(h_2)^* x \lambda_{G/H}(h_1)] g_1, g_2\>_{L^2(\mu_H)}.
    \end{align*}
The density of $L^1(H)\cap L^2(H)$ in $L^2(H)$ therefore yields 
    \begin{align}\label{eqn:operator_valued_weight}
        1\otimes \omega_{h_1,h_2}(wxw^*) = T[\lambda_{G/H}(h_2)^* x \lambda_{G/H}(h_1)],
    \end{align}
where $\omega_{h_1,h_2} = \<\,\cdot\, h_1, h_2\>_{L^2(\mu_H)}$.

Now, from (\ref{eqn:quotient_decomposition_over_kernel}) we have
    \[
        L^2(G/H,\mu_{G/H}) = \bigoplus_{\delta\in \Delta^{-1}} L^2(s_\delta G_1 H, \mu_{G/H}),
    \]
and so if $\B_\delta \subset L^2(s_\delta G_1H, \mu_{G/H})$ is an orthonormal basis for each $\delta\in \Delta^{-1}$, then $\B:= \bigcup_{\delta\in \Delta^{-1}} \B_\delta$ is an orthonormal basis for $L^2(G/H,\mu_{G/H})$.  Given $f\in C_c(G)$ and $r\in G$, define $f^{(r)}(sH):=f(\sigma(sH) r)$ so that $f^{(r)}\in L^\infty(G/H,\mu_{G/H}) \subset L^2(G/H,\mu_{G/H})$ and for $b\in \mathcal{B}$ one has
    \[
        [\lambda_{G/H}(b)^* f](r) = \int_{G/H} \overline{b(sH)} f(\sigma(sH) r)\ d\mu_{G/H}(sH)  =  \< f^{(r)}, b\>_{L^2(\mu_{G/H})}.
    \]
Then for $f_1,f_2\in C_c(G)$ one has
    \begin{align*}
        \sum_{b\in \B_\delta} \< \lambda_{G/H}(b)^* f_1, \lambda_{G/H}(b)^* f_2\>_{L^2(\mu_G)} &= \sum_{b\in \B_\delta} \int_G \< f_1^{(r)}, b\>_{L^2(\mu_{G/H})} \< b, f_2^{(r)}\>_{L^2(\mu_{G/H})}\ d\mu_G(r) \\
        &= \int_G \<  f_1^{(r)}, 1_{s_\delta G_1 H} f_2^{(r)}\>_{L^2(\mu_{G/H})}\ d\mu_G(r)\\
        &= \int_{s_\delta G_1 H} \int_G f_1(\sigma(sH) r) \overline{f_2(\sigma(sH) r)}\ d\mu_G(r) d\mu_{G/H}(sH)\\
        &= \mu_{G/H}(s_\delta G_1 H) \< f_1, f_2\>_{L^2(\mu_G)},
    \end{align*}
where in the second equality we have used that $\B_\delta$ is an orthonormal basis for $1_{s_\delta G_1 H} L^2(G/H, \mu_{G/H}) = L^2(s_\delta G_1 H, \mu_{G/H})$. Noting that the covolume of $(H,\mu_H)\leq (G,\mu_G)$ is given by
    \[
        [\mu_G : \mu_H] = \sum_{\delta\in \Delta} \mu_{G/H}(s_\delta G_1 H) = \sum_{\delta\in \Delta} \mu_{G/H}(G_1 H) = |\Delta| \mu_{G/H}(G_1H),  
    \]
we see that the above computation shows
    \begin{align}\label{eqn:basis_to_covolume}
        \sum_{b\in \B_\delta} \lambda_{G/H}(b) \lambda_{G/H}(b)^* = \frac{1}{|\Delta|} [\mu_G : \mu_H]
    \end{align}
by the density of $C_c(G)$ in $L^2(G)$.

Combining (\ref{eqn:operator_valued_weight}) and (\ref{eqn:basis_to_covolume}), for $x\in L(G)^{\varphi_G}_+$ one then has
    \begin{align*}
        (\varphi_H\otimes \Tr_{L^2(G/H,\mu_{G/H})})(wxw^*) &= \sum_{b\in \B} \varphi_H(T[\lambda_{G/H}(b)^* x \lambda_{G/H}(b)])\\
            &= \sum_{b\in \B} \varphi_G(\lambda_{G/H}(b)^* x \lambda_{G/H}(b))\\
            &= \sum_{\delta\in \Delta}\sum_{b\in \B_\delta} \delta^{-1} \varphi_G( x^{1/2} \lambda_{G/H}(b)\lambda_{G/H}(b)^* x^{1/2})\\
            &=\left(\frac{1}{|\Delta|}\sum_{\delta\in \Delta^{-1}} \delta^{-1}\right) [\mu_G : \mu_H] \varphi_G(x) =\left(\frac{1}{|\Delta|}\sum_{\delta\in \Delta} \delta\right) [\mu_G : \mu_H] \varphi_G(x),
    \end{align*}
where in the third-to-last equality we have used that $\sigma_t^{\varphi_G}( \lambda_{G/H}(b)) = \delta^{it} \lambda_{G/H}(b)$ for $b\in \B_\delta$, which follows from $\supp(b)\subset s_\delta G_1 H = (\Delta_G\circ \sigma)^{-1}(\{\delta\})$. Consequently, we have
    \begin{align*}
         \dim_{(L(H),\varphi_H)}(\pi\circ\theta_{\Delta},\H) &= \left(\varphi_H\otimes \Tr_{L^2(G/H,\mu_{G/H})}\otimes \Tr_{\K}\right)\left[ (w\otimes 1)(J_{\varphi_G}\otimes 1)vv^* (J_{\varphi_G}\otimes 1) (w^*\otimes 1) \right]\\
            &= \left(\frac{1}{|\Delta|}\sum_{\delta\in \Delta} \delta\right) [\mu_G : \mu_H] \left(\varphi_G\otimes \Tr_{\K}\right)\left[ (J_{\varphi_G}\otimes 1)vv^* (J_{\varphi_G}\otimes 1)\right]\\
            &= \left(\frac{1}{|\Delta|}\sum_{\delta\in \Delta} \delta\right) [\mu_G : \mu_H] \dim_{(L(G),\varphi_G)}(\pi, \H),
    \end{align*}
as claimed.
\end{proof}

For $G,H$ as in the previous theorem, note that the restrictions $\mu_{\ker{\Delta_G}}:=\mu_G|_{\B(\ker{\Delta_G})}$ and $\mu_{\ker{\Delta_H}}:=\mu_H|_{\B(\ker{\Delta_H})}$ are left Haar measures satisfying
    \[
        [\mu_{\ker{\Delta_G}} : \mu_{\ker{\Delta_H}}] = \mu_{G/H}(\ker{\Delta_G} H) = \frac{1}{|\Delta|} [\mu_G : \mu_H].
    \]
Thus scaling factor in the previous theorem is equivalently given by
    \[
        \left(\frac{1}{|\Delta|}\sum_{\delta\in \Delta} \delta\right) [\mu_G : \mu_H] = \left(\sum_{\delta\in \Delta} \delta\right) [\mu_{\ker{\Delta_G}} : \mu_{\ker{\Delta_H}}]
    \]
Also observe that if $\Delta_H(H)$ is dense in $\R$, then $\frac{1}{|\Delta|}\sum_{\delta\in \Delta} \delta$ can be made arbitrarily close to one by choosing the coset representatives in $\Delta$ sufficiently close to one.
    
In the case that $\Delta_H(H)=\Delta_G(H)$, one can choose $\Delta=\{1\}$ and the above scaling factor is simply $[\mu_G: \mu_H]$. Additionally, one has $\theta_\Delta(e_{\varphi_H}) = e_{\varphi_G}$. This yields the following corollary.

\begin{cor}\label{cor:covolume_dimension_formula}
Let $G$ be a second countable almost unimodular group with finite covolume subgroup $H\leq G$, and let $\varphi_G$ (resp. $\varphi_H$) be the Plancherel weight on $L(G)$ (resp. $L(H))$ associated to a left Haar measure $\mu_G$ on $G$ (resp. $\mu_H$ on $H$). Suppose $\Delta_H(H)=\Delta_G(G)$. Then identifying $\<L(H),e_{\varphi_H}\>\cong \<L(H),e_{\varphi_G}\> \leq \< L(G), e_{\varphi_G}\>$ one has
    \begin{align}\label{eqn:covolume_dimension_formula_equal_point_modular_groups}
        \dim_{(L(H),\varphi_H)}(\pi, \H)= [\mu_G : \mu_H] \dim_{(L(G),\varphi_G)}(\pi,\H)
    \end{align}
for any left $(L(G),\varphi_G)$-module $(\pi, \H)$.
\end{cor}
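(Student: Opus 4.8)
The plan is to obtain this corollary as a direct specialization of Theorem~\ref{thm:covolume_dimension_formula} to the situation $\Delta_H(H)=\Delta_G(G)$. Under this hypothesis the quotient $\Delta_G(G)/\Delta_H(H)$ is trivial, so one may take $\Delta=\{1\}$ as a complete set of coset representatives. First I would record what the $*$-homomorphism $\theta_\Delta$ furnished by Theorem~\ref{thm:covolume_dimension_formula} becomes for this choice: it is the unique injective, normal, unital $*$-homomorphism $\<L(H),e_{\varphi_H}\>\to\<L(G),e_{\varphi_G}\>$ with $\theta_{\{1\}}(\lambda_H(t))=\lambda_G(t)$ for $t\in H$ and
\[
\theta_{\{1\}}(e_{\varphi_H}) = 1_{\Delta_G^{-1}(\{1\})} = 1_{\ker{\Delta_G}} = e_{\varphi_G},
\]
the sum over $\Delta$ in the statement of Theorem~\ref{thm:covolume_dimension_formula} consisting of the single term $\delta=1$. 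In particular $\theta_{\{1\}}$ is precisely the inclusion realizing the identification $\<L(H),e_{\varphi_H}\>\cong\<L(H),e_{\varphi_G}\>\leq\<L(G),e_{\varphi_G}\>$ appearing in the statement, and for a representation $(\pi,\H)$ of $\<L(G),e_{\varphi_G}\>$ the $(L(H),\varphi_H)$-module obtained by restriction is exactly $(\pi\circ\theta_{\{1\}},\H)$.

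Next I would evaluate the scaling factor in formula (\ref{eqn:covolume_dimension_formula}) for $\Delta=\{1\}$: since $|\Delta|=1$ and $\sum_{\delta\in\Delta}\delta=1$, that factor equals $1$. Hence Theorem~\ref{thm:covolume_dimension_formula} gives, for every left $(L(G),\varphi_G)$-module $(\pi,\H)$, that $(\pi\circ\theta_{\{1\}},\H)$ is a left $(L(H),\varphi_H)$-module with
\[
\dim_{(L(H),\varphi_H)}(\pi\circ\theta_{\{1\}},\H) = [\mu_G:\mu_H]\,\dim_{(L(G),\varphi_G)}(\pi,\H),
\]
which is exactly (\ref{eqn:covolume_dimension_formula_equal_point_modular_groups}) under the identification above.

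There is essentially no genuine obstacle beyond this bookkeeping; the statement is a one-line consequence once Theorem~\ref{thm:covolume_dimension_formula} is in hand. The only point worth stating explicitly is that $\theta_{\{1\}}(e_{\varphi_H})=e_{\varphi_G}$, which is what makes the Jones projection for the inclusion $L(\ker{\Delta_H})\leq L(H)$ agree, after identification, with that for $L(\ker{\Delta_G})\leq L(G)$, so that ``$(\pi,\H)$'' on the left-hand side of (\ref{eqn:covolume_dimension_formula_equal_point_modular_groups}) unambiguously denotes the restriction of $\pi$ to $\<L(H),e_{\varphi_G}\>$.
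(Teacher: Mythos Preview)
Your proposal is correct and follows essentially the same approach as the paper: one specializes Theorem~\ref{thm:covolume_dimension_formula} by taking $\Delta=\{1\}$, notes that the scaling factor reduces to $[\mu_G:\mu_H]$, and observes that $\theta_{\{1\}}(e_{\varphi_H})=e_{\varphi_G}$ so that $\theta_{\{1\}}$ realizes the stated identification $\<L(H),e_{\varphi_H}\>\cong\<L(H),e_{\varphi_G}\>\leq\<L(G),e_{\varphi_G}\>$.
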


\begin{rem}
In the context of Theorem~\ref{thm:covolume_dimension_formula}, suppose $G/H$ is a finite set. Then as noted above one has $[\mu_G : \mu_H]= [G : H]$ when $\mu_H= \mu_G|_{\B(H)}$. In this case, (\ref{eqn:covolume_dimension_formula_equal_point_modular_groups}) should be compared with \cite[Proposition 3.11]{GGLN25}, which established the same formula for finite index inclusions of separable factors equipped with almost periodic states. $\hfill\blacksquare$
\end{rem}

\begin{rem}
If $H\leq G$ is an inclusion of locally compact groups such that $\Delta_G|_H= \Delta_H$ (equivalently, $G/H$ admits a non-trivial $G$-invariant Radon measure), then by the same argument as in the proof of Theorem~\ref{thm:covolume_dimension_formula} there exists a faithful normal operator valued weight $T$ from $L(G)$ to $L(H)$ that intertwines a fixed pair of Plancherel weights $\varphi_G,\varphi_H$. In the special case that $H$ has finite covolume, the inclusion $L(H)\leq L(G)$ is \emph{compact} is the sense of \cite{HO89} where the conditional expectation from $(J_{\varphi_G}L(H) J_{\varphi_G})' = R(H)'$ (the basic construction for $L(H)\leq L(G)$) onto $L(G)$ is given by
    \[
        \mathcal{E}(x) = \frac{1}{\mu_{G/H}(G/H)} \int_{G/H} \rho_G(s) x \rho_G(s)^*\ d\mu_{G/H}(sH) \qquad x\in R(H)'.
    \]
Furthermore, the set $\{\lambda_{G/H}(b)\colon b\in \mathcal{B}\}$ in the proof of Theorem~\ref{thm:covolume_dimension_formula} is a Pimsner--Popa basis relative to $T$. Indeed, for $g\in L^1(H)\cap L^2(H)$ and $h\in L^2(G/H,\mu_{G/H})$ one has
    \[
        T\left[\lambda_{G/H}(b)^* \lambda_G(w^*(g\otimes h))\right] = T\left[\lambda_{G/H}(b)^* \lambda_{G/H}(h) \right] \lambda_H(g)= \<h, b\>_{L^2(\mu_{G/H})} \lambda_H(g).
    \]
Using that $\|\lambda_{G/H}(k)\| \leq \|k\|_{L^1(\mu_{G/H})} \leq \|k\|_{L^2(\mu_{G/H})} [\mu_G : \mu_H]^{\frac12}$ for any $k\in L^2(G/H,\mu_{G/H})$, it follows that
    \[
        \lambda_G(w^*(g\otimes h)) = \lambda_{G/H}(h) \lambda_H(g) = \sum_{b\in B} \lambda_{G/H}(b) \< h,b\>_{L^2(\mu_{G/H})} \lambda_H(g) = \sum_{b\in B} \lambda_{G/H}(b) T \left[ \lambda_{G/H}(b)^* \lambda_{G}(w^*(g\otimes h)) \right],
    \]
where the sum converges in norm and we have identified $\theta_\Delta(\lambda_H(g)) = \lambda_H(g)$. $\hfill\blacksquare$
\end{rem}

In \cite{AS77}, Atiyah and Schmid provided a formula relating covolumes of lattices, formal degrees of irreducible square integrable representations, and Murray--von Neumann dimension. More precisely, suppose $G$ is a locally compact group with a lattice subgroup $\Gamma\leq G$. Then $G$ is necessarily unimodular (see, for example, \cite[Proposition B.2.2.(ii)]{BdlHV08}), and the Plancherel weight on $L(\Gamma)$ is a faithful normal tracial state $\tau_\Gamma$. If $(\pi, \H)$ is an irreducible square integrable representation of $G$, then $\pi$ has a unique extension $\widetilde{\pi}\colon L(G)\to B(\H)$. It was shown in \cite[Equation (3.3)]{AS77} (see also \cite[Theorem 3.3.2]{GdlHJ89}) that one has
    \[
        \dim_{(L(\Gamma),\tau_\Gamma)}(\widetilde{\pi}|_{L(\Gamma)}, \H) = d_\pi [\mu_G : \mu_\Gamma],
    \]
where the formal degree $d_\pi$ of $(\pi, \H)$ is with respect to some fixed left Haar measure $\mu_G$ on $G$, and $\mu_\Gamma$ is the counting measure. We generalize this formula to finite covolume subgroups of almost unimodular groups in Theorem~\ref{thm:formula} below. 

Recall from \cite[Proposition 7]{DM76} that an irreducible square integrable representation $(\pi,\H)$ of an almost unimodular group $G$ is induced by an irreducible square integrable representation $(\pi_1,\H_1)$ of its unimodular part $\ker{\Delta_G}$. Additionally, the square integrability implies these representations admit extensions to $L(G)$ and $L(\ker{\Delta_G})$, respectively. Thus, by Theorems~\ref{thm:reps_induced_from_kernel} and \ref{thm:extending_reps_to_basic_construction}, there is a further extension $\widetilde{\pi}$ of $\pi$ to the basic construction $\<L(G),e_{\varphi_G}\>$ satisfying $\widetilde{\pi}(e_{\varphi_G})\H= \H_1$.

\begin{thm}[{Theorem~\ref{introthm:D}}]\label{thm:formula}
Let $G$ be a second countable almost unimodular group with finite covolume subgroup $H\leq G$, let $\varphi_G$ (resp. $\varphi_H$) be the Plancherel weight on $L(G)$ (resp. $L(H)$) associated to a left Haar measure $\mu_G$ on $G$ (resp. $\mu_H$ on $H$), and for each set $\Delta$ of coset representatives of $\Delta_H(H)\leq \Delta_G(G)$ let $\theta_\Delta\colon \<L(H),e_{\varphi_H}\> \to \<L(G),e_{\varphi_G}\>$ be as in Theorem~\ref{thm:covolume_dimension_formula}. Let $(\pi, \H)$ be an irreducible square integrable representation of $G$, let $(\pi_1,\H_1)$ be the irreducible square integrable representation of $\ker{\Delta_G}$ that induces $(\pi,\H)$, and let $(\widetilde{\pi},\H)$ be the representation of $\<L(G),e_{\varphi_G}\>$ extending $(\pi,\H)$. Then one has
    \[
        \dim_{(L(H), \varphi_H)}(\widetilde{\pi}\circ\theta_\Delta, \H) = d_{\pi_1} \left(\frac{1}{|\Delta|}\sum_{\delta\in \Delta} \delta\right) [\mu_G : \mu_H],
    \]
where $d_{\pi_1}$ is the formal degree of $(\pi_1, \H_1)$ with respect to $\mu_{\ker{\Delta_G}}:=\mu_G|_{\mathcal{B}(\ker{\Delta_G})}$.
\end{thm}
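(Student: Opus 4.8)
The plan is to first reduce to the case $H=G$ via Theorem~\ref{thm:covolume_dimension_formula}, and then to compute the Murray--von Neumann dimension of $(\widetilde{\pi},\H)$ directly by realizing $\H$ inside $L^2(G)$ through the coefficient functions of $\pi$. Since $(\widetilde{\pi},\H)$ is by construction a left $(L(G),\varphi_G)$-module, Theorem~\ref{thm:covolume_dimension_formula} applies verbatim and yields
    \[
        \dim_{(L(H),\varphi_H)}(\widetilde{\pi}\circ\theta_\Delta,\H)=\left(\frac{1}{|\Delta|}\sum_{\delta\in\Delta}\delta\right)[\mu_G:\mu_H]\ \dim_{(L(G),\varphi_G)}(\widetilde{\pi},\H),
    \]
so the theorem reduces to the identity $\dim_{(L(G),\varphi_G)}(\widetilde{\pi},\H)=d_{\pi_1}$. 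To prove this I would fix a unit vector $\xi\in\H_1$ and use the isometric intertwiner $v\colon\H\to L^2(G)$ from the remark following Theorem~\ref{thm:sq_int_irreps} (a scalar multiple of $\eta\mapsto c_{\xi,\eta}$). It satisfies $v\pi(s)=\lambda_G(s)v$, and because $\xi\in\H_1$ the relation (\ref{eqn:comutation_relation_of_P_lambda_and_pi(G)}) forces every $c_{\xi,\eta}$ with $\eta\in\H_1$ to be supported in $\ker{\Delta_G}$, whence $v\widetilde{\pi}(e_{\varphi_G})=e_{\varphi_G}v$. As the set of $x\in\<L(G),e_{\varphi_G}\>$ with $v\widetilde{\pi}(x)=xv$ is a $\sigma$-weakly closed subalgebra containing $\lambda_G(G)$ and $e_{\varphi_G}$ together with their adjoints, $v$ is a standard intertwiner with trivial ancillary space, so
    \[
        \dim_{(L(G),\varphi_G)}(\widetilde{\pi},\H)=\varphi_G\!\left(J_{\varphi_G}vv^*J_{\varphi_G}\right).
    \]

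The heart of the argument is to identify $J_{\varphi_G}vv^*J_{\varphi_G}$ explicitly. Decomposing $\H=\bigoplus_{\delta\in\Delta_G(G)}\pi(\sigma(\delta))\H_1$ for a normalized section $\sigma$ and choosing an orthonormal basis $\{e_j\}$ of $\H_1$, the orthogonality relations (\ref{eqn:formal_degree_operator_equation}) (with $D|_{\H_1}=d_{\pi_1}$) show that $\{d_{\pi_1}^{1/2}\lambda_G(\sigma(\delta))c_{\xi,e_j}\}_{\delta,j}$ is an orthonormal basis of $v\H$, so $vv^*=d_{\pi_1}\sum_{\delta,j}|\lambda_G(\sigma(\delta))c_{\xi,e_j}\rangle\langle\lambda_G(\sigma(\delta))c_{\xi,e_j}|$. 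Using $J_{\varphi_G}\lambda_G(s)J_{\varphi_G}=\rho_G(s)$ together with the explicit formula $[J_{\varphi_G}g](s)=\Delta_G(s)^{-1/2}\overline{g(s^{-1})}$ (which gives $J_{\varphi_G}c_{\xi,e_j}=c_{e_j,\xi}$, again supported in $\ker{\Delta_G}$), one rewrites
    \[
        J_{\varphi_G}vv^*J_{\varphi_G}=d_{\pi_1}\sum_{\delta\in\Delta_G(G)}\rho_G(\sigma(\delta))\Bigl(\sum_j|c_{e_j,\xi}\rangle\langle c_{e_j,\xi}|\Bigr)\rho_G(\sigma(\delta))^*.
    \]
The Schur orthogonality relations for the square integrable irreducible representation $\pi_1$ of the \emph{unimodular} group $\ker{\Delta_G}$ identify $d_{\pi_1}\sum_j|c_{e_j,\xi}\rangle\langle c_{e_j,\xi}|$ as the orthogonal projection onto the coefficient subspace $\overline{\operatorname{span}}\{c_{\zeta,\xi}:\zeta\in\H_1\}\subseteq L^2(\ker{\Delta_G})$, and this projection equals $\widehat{q}\,e_{\varphi_G}$ for a projection $\widehat{q}\in L(\ker{\Delta_G})\subseteq L(G)$ (concretely $\widehat{q}=d_{\pi_1}\lambda_{\ker{\Delta_G}}(c_{\xi,\xi})$). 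Since $\widehat{q}$ commutes with $\rho_G(\sigma(\delta))\in R(G)$ and with $e_{\varphi_G}$ (as $\widehat{q}\in L(G)^{\varphi_G}$), and since $\rho_G(\sigma(\delta))e_{\varphi_G}\rho_G(\sigma(\delta))^*=1_{\Delta_G^{-1}(\{\delta^{-1}\})}$, the sum collapses to $J_{\varphi_G}vv^*J_{\varphi_G}=\widehat{q}\sum_{\delta}1_{\Delta_G^{-1}(\{\delta^{-1}\})}=\widehat{q}$. Finally, by Theorem~\ref{thm:when_is_plancherel_weight_almost_periodic} the restriction $\varphi_G|_{L(\ker{\Delta_G})}$ is the Plancherel weight of $\ker{\Delta_G}$ (a trace, $\ker{\Delta_G}$ being unimodular), and
    \[
        \varphi_G(\widehat{q})=d_{\pi_1}\,\varphi_{\ker{\Delta_G}}\!\left(\lambda_{\ker{\Delta_G}}(c_{\xi,\xi})\right)=d_{\pi_1},
    \]
using $\lambda_{\ker{\Delta_G}}(c_{\xi,\xi})=d_{\pi_1}\lambda_{\ker{\Delta_G}}(c_{\xi,\zeta})^*\lambda_{\ker{\Delta_G}}(c_{\xi,\zeta})$ and $\|c_{\xi,\zeta}\|^2_{L^2(\mu_{\ker{\Delta_G}})}=d_{\pi_1}^{-1}$ for any unit $\zeta\in\H_1$. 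Combined with the reduction, this is the asserted formula.

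The step I expect to be the main obstacle is the bookkeeping in the identification of $J_{\varphi_G}vv^*J_{\varphi_G}$: one must keep straight the interplay between the modular conjugation $J_{\varphi_G}$, left versus right translations, and the non-unital embedding $L(\ker{\Delta_G})\hookrightarrow L(G)$. The subtle point is that $d_{\pi_1}\sum_j|c_{e_j,\xi}\rangle\langle c_{e_j,\xi}|$, viewed as an operator on $L^2(G)$, lies in $e_{\varphi_G}L(G)e_{\varphi_G}$ but \emph{not} in $L(G)$; only after conjugating by the $\rho_G(\sigma(\delta))$ and summing over \emph{all} $\delta\in\Delta_G(G)$ does it reassemble into a genuine projection $\widehat{q}\in L(\ker{\Delta_G})$, and one must then verify that its Plancherel value is exactly the classical formal degree $d_{\pi_1}$ with respect to $\mu_{\ker{\Delta_G}}=\mu_G|_{\mathcal{B}(\ker{\Delta_G})}$. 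The remaining ingredients are routine consequences of Theorems~\ref{thm:covolume_dimension_formula} and \ref{thm:when_is_plancherel_weight_almost_periodic} and the Duflo--Moore orthogonality relations.
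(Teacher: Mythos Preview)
Your argument is correct and proceeds by a genuinely different route than the paper. Both begin with the reduction to $H=G$ via Theorem~\ref{thm:covolume_dimension_formula} and arrive at $\dim_{(L(G),\varphi_G)}(\widetilde{\pi},\H)=d_{\pi_1}$, but the computations of this last quantity diverge. The paper never identifies $J_{\varphi_G}vv^*J_{\varphi_G}$ explicitly: instead it picks a partition of unity $\{p_i=\lambda_G(\xi_i)\}\subset L(\ker{\Delta_G})\cap\dom(\varphi_G)$ furnished by strict semifiniteness, proves the reproducing identity $\|\eta\|^2=\sum_i\int_G|\langle\eta,\lambda_G(s)\xi_i\rangle|^2\,d\mu_G(s)$, and then feeds this and the Duflo--Moore relation (\ref{eqn:formal_degree_operator_equation}) directly into $\varphi_G(J_{\varphi_G}vv^*J_{\varphi_G})=\sum_i\|v^*\xi_i\|^2$. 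Your approach is more concrete: you use the coefficient isometry from the remark after Theorem~\ref{thm:sq_int_irreps} as the standard intertwiner, decompose $vv^*$ over the orthonormal basis $\{d_{\pi_1}^{1/2}\lambda_G(\sigma(\delta))c_{\xi,e_j}\}$, and then recognize $J_{\varphi_G}vv^*J_{\varphi_G}$ as the single projection $\widehat{q}=d_{\pi_1}\lambda_{\ker{\Delta_G}}(c_{\xi,\xi})\in L(\ker{\Delta_G})$, whose Plancherel value is read off from the classical convolution identity $c_{\zeta,\xi}*c_{\xi,\zeta}=d_{\pi_1}^{-1}c_{\xi,\xi}$. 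The payoff of your route is an explicit closed form for the range projection; the payoff of the paper's is that it sidesteps verifying that $c_{\xi,\xi}$ is a left convolver and the attendant convolution identities, relying only on the abstract semicyclic formula and the eigenvalue $D|_{\H_1}=d_{\pi_1}$.
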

\begin{proof}
By Theorem~\ref{thm:covolume_dimension_formula} it suffices to consider the case when $H=G$. As usual, we denote $G_1:=\ker{\Delta_G}$. The strict semifiniteness of $\varphi_G$ implies that we can find a family of projections $\{p_i\in L(G_1)\cap \dom(\varphi_G)\}$ satisfying $\sum_i p_i =1$. Since $p_i\in \dom(\varphi_G)$ for each $i\in I$, it follows that $p_i=\lambda_G(\xi_i)$ for some left convolver $\xi_i\in L^2(G_1,\mu_G)$. We claim that for any $\eta\in L^2(G)$ one has
    \begin{align}\label{eqn:semicyclic_presentation}
        \|\eta\|_{L^2(\mu_G)}^2 = \sum_{i\in I} \int_G |\<\eta, \lambda_G(s) \xi_i\>_{L^2(\mu_G)}|^2\ d\mu_G(s).
    \end{align}
First, consider the case when $\eta = J_{\varphi_G} p_i J_{\varphi_G} \eta$. Observe that
    \begin{align*}
        \eta(s) &= [J_{\varphi_G} \lambda_G(\xi_i) J_{\varphi_G} \eta](s) = \Delta_G(s)^{-1/2} \overline{[\lambda_G(\xi_i) J_{\varphi_G} \eta](s^{-1})}\\
            &= \int_{G_1} \Delta_G(s)^{-1/2} \overline{\xi_i(r)} \overline{[J_{\varphi_G}\eta](r^{-1}s^{-1})}\ d\mu_G(r) = \int_{G_1} \overline{\xi_i(r)} \eta(sr)\ d\mu_G(r) = \< \eta, \lambda_G(s) \xi_i\>_{L^2(\mu_G)}.
    \end{align*}
Consequently, one has
    \[
        \|\eta\|_{L^2(\mu_G)}^2 = \int_G |\eta(s)|^2\ d\mu_G(s) = \int_G |\< \eta, \lambda_G(s)\xi_i\>_{L^2(\mu_G)}|^2\ d\mu_G(s).
    \]
Then general case follows from $\sum_i J_{\varphi_G} p_i J_{\varphi_G}=1$.

Now, let $v\colon \H\to L^2(G)$ be a standard intertwiner. Then
    \[
        \dim_{(L(G),\varphi_G)}(\widetilde{\pi},\H) = \varphi_G( J_{\varphi_G} vv^* J_{\varphi_G}) = \sum_{i\in I} \varphi_G( p_i J_{\varphi_G} vv^* J_{\varphi_G} p_i) = \sum_{i\in I} \| v^* J_{\varphi_G} \xi_i\| = \sum_{i\in I} \|v^* \xi_i\|.
    \]
If $D$ is the formal degree operator of $(\pi,\H)$, then $\xi_i$ being supported in $G_1$ implies $D^{-1/2} v^* \xi_i = d_{\pi_1}^{1/2} v^* \xi_i$ (see Theorem~\ref{thm:sq_int_irreps}). Using this along with (\ref{eqn:formal_degree_operator_equation}) and (\ref{eqn:semicyclic_presentation}), for any unit vector $\eta\in \H$ we have
    \begin{align*}
        \dim_{(L(G),\varphi_G)}(\widetilde{\pi},\H) &= d_{\pi_1} \sum_{i\in I} \int_G |\<\pi(s) v^*\xi_i, \eta\>|^2\ d\mu_G(s)\\
            &= d_{\pi_1} \sum_{i\in I} \int_G |\< \lambda_G(s) \xi_i, v\eta\>_{L^2(\mu_G)}|^2\ d\mu_G(s) = d_{\pi_1} \|v\eta\|_{L^2(\mu_G)}^2 = d_{\pi_1},
    \end{align*}
as claimed.
\end{proof}

Finally, we highlight the special case $\Delta_H(H)=\Delta_G(G)$ in the previous theorem as a corollary:

\begin{cor}
Let $G$ be a second countable almost unimodular group with finite covolume subgroup $H\leq G$, and let $\varphi_G$ (resp. $\varphi_H$) be the Plancherel weight on $L(G)$ (resp. $L(H)$) associated to a left Haar measure $\mu_G$ on $G$ (resp. $\mu_H$ on $H$). Suppose $\Delta_H(H)=\Delta_G(G)$ so that we may identify $\<L(H),e_{\varphi_H}\>\cong \<L(H),e_{\varphi_G}\> \leq \<L(G),e_{\varphi_G}\>$. Let $(\pi, \H)$ be an irreducible square integrable representation of $G$, let $(\pi_1,\H_1)$ be the irreducible square integrable representation of $\ker{\Delta_G}$ that induces $(\pi,\H)$, and let $(\widetilde{\pi},\H)$ be the representation of $\<L(G),e_{\varphi_G}\>$ extending $(\pi,\H)$. Then one has
    \[
        \dim_{(L(H), \varphi_H)}(\widetilde{\pi}, \H) = d_{\pi_1} [\mu_G : \mu_H],
    \]
where $d_{\pi_1}$ is the formal degree of $(\pi_1, \H_1)$ with respect to $\mu_{\ker{\Delta_G}}:=\mu_G|_{\mathcal{B}(\ker{\Delta_G})}$.
\end{cor}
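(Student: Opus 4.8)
The plan is to obtain this corollary as the immediate specialization of Theorem~\ref{thm:formula} to the situation in which $\{1\}$ is a (hence the) set of coset representatives for $\Delta_H(H)\leq \Delta_G(G)$. This is available precisely because $\Delta_H(H)=\Delta_G(G)$, and it is the case already isolated in the discussion preceding Corollary~\ref{cor:covolume_dimension_formula}.

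Concretely, I would set $\Delta=\{1\}$ and first record that, with this choice, the $*$-homomorphism $\theta_\Delta\colon \<L(H),e_{\varphi_H}\>\to \<L(G),e_{\varphi_G}\>$ of Theorem~\ref{thm:covolume_dimension_formula} satisfies $\theta_\Delta(\lambda_H(t))=\lambda_G(t)$ for $t\in H$ and $\theta_\Delta(e_{\varphi_H})=1_{\Delta_G^{-1}(\{1\})}=e_{\varphi_G}$; by the uniqueness clause of that theorem, $\theta_{\{1\}}$ is therefore exactly the canonical identification $\<L(H),e_{\varphi_H}\>\cong \<L(H),e_{\varphi_G}\>\leq \<L(G),e_{\varphi_G}\>$ appearing in the statement of the corollary. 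Under this identification $\widetilde{\pi}\circ\theta_{\{1\}}$ is simply $\widetilde{\pi}$ regarded as a representation of the subalgebra $\<L(H),e_{\varphi_G}\>$, so $(\widetilde{\pi},\H)$ genuinely is a left $(L(H),\varphi_H)$-module and $\dim_{(L(H),\varphi_H)}(\widetilde{\pi},\H)$ makes sense. Invoking Theorem~\ref{thm:formula} with this $\Delta$, the averaging factor $\frac{1}{|\Delta|}\sum_{\delta\in\Delta}\delta$ collapses to $1$, and the theorem gives
\[
\dim_{(L(H),\varphi_H)}(\widetilde{\pi},\H)=\dim_{(L(H),\varphi_H)}(\widetilde{\pi}\circ\theta_{\{1\}},\H)=d_{\pi_1}\cdot 1\cdot[\mu_G:\mu_H]=d_{\pi_1}[\mu_G:\mu_H],
\]
with $d_{\pi_1}$ the formal degree of $(\pi_1,\H_1)$ relative to $\mu_{\ker{\Delta_G}}:=\mu_G|_{\mathcal{B}(\ker{\Delta_G})}$, which is the asserted formula.

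I do not expect any substantive obstacle here: the content is carried entirely by Theorem~\ref{thm:formula} (and through it Theorems~\ref{thm:covolume_dimension_formula}, \ref{thm:sq_int_irreps}, and \ref{thm:extending_reps_to_basic_construction}). The only point deserving a sentence of care is matching the identification $\<L(H),e_{\varphi_H}\>\cong \<L(H),e_{\varphi_G}\>$ used in the corollary with $\theta_{\{1\}}$, which is handled by the uniqueness in Theorem~\ref{thm:covolume_dimension_formula} together with the computation $\theta_{\{1\}}(e_{\varphi_H})=e_{\varphi_G}$; everything else is routine specialization.
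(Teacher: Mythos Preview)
Your proposal is correct and matches the paper's approach exactly: the paper presents this corollary explicitly as the special case $\Delta_H(H)=\Delta_G(G)$ of Theorem~\ref{thm:formula}, with $\Delta=\{1\}$, and provides no further proof. Your observation that $\theta_{\{1\}}$ coincides with the canonical identification $\<L(H),e_{\varphi_H}\>\cong\<L(H),e_{\varphi_G}\>\leq\<L(G),e_{\varphi_G}\>$ via the uniqueness clause of Theorem~\ref{thm:covolume_dimension_formula} is precisely the point already noted in the discussion preceding Corollary~\ref{cor:covolume_dimension_formula}.
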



\bibliographystyle{amsalpha}
\bibliography{references}

\end{document}